\newtheorem{theo}{Theorem}[section]
\newtheorem{prop}[theo]{Proposition}
\newtheorem{coro}[theo]{Corollary}
\newtheorem*{lem}{Lemma}
\numberwithin{equation}{section}
\newcommand\la{\lambda}
\newcommand\al{\alpha}
\newcommand\be{\beta}
\newcommand\ta{\theta}
\newcommand\npbi[3]{{\genfrac{\langle}{\rangle}{0pt}{}{#1}{#2}}_{#3}}
\title{Class expansion of some symmetric functions
\\ in Jucys-Murphy elements}
\author{Michel Lassalle\\
\small Centre National de la Recherche Scientifique\\[-0.8ex]
\small Institut Gaspard-Monge, Universit\'e de Marne-la-Vall\'ee\\[-0.8ex]
\small 77454 Marne-la-Vall\'ee Cedex, France\\[-0.8ex]
\small \texttt{lassalle@univ-mlv.fr}\\[-0.8ex]
\small \texttt{http://igm.univ-mlv.fr/{\textasciitilde}lassalle}
}
\date{}
\begin{document}
\maketitle

\begin{abstract}
We present a method to compute the class expansion of a symmetric function in the Jucys-Murphy elements of the symmetric group. We apply this method to one-row Hall-Littlewood symmetric functions, which interpolate between power sums and complete symmetric functions.
\end{abstract}

\section{Introduction}

Let $S_n$ be the group of permutations of $n$ letters, $\mathbb{C}[S_n]$ its group algebra and $\mathcal{Z}_n$ the center of $\mathbb{C}[S_n]$. Given a partition $\mu$ with weight $n$, denote by $C_\mu$ the conjugacy class of permutations having cycle-type $\mu$, viewed as the formal sum of its elements. These classes form a basis of $\mathcal{Z}_n$. 

For $i=1,\ldots,n$ the Jucys-Murphy elements $J_i$ are defined by $J_i=\sum_{j<i} (ji)$, where $(ji)$ is a transposition. These commutative elements were introduced independently in~\cite{Ju}  and~\cite{Mu1,Mu2}. They do not belong to $\mathcal{Z}_n$. However Jucys and Murphy proved, by different means, that $\mathcal{Z}_n$ coincides with the algebra of symmetric functions in the $J_i$'s. 

Given a symmetric function $f$, it is therefore a natural problem to study the class expansion
\begin{equation*}
f(J_1,\ldots,J_n)=\sum_{|\mu|=n}  a_{\mu}(n) \, C_\mu, 
\end{equation*}
i.e. the development of its specialization $f(J_1,\ldots,J_n)$ in terms of the basis $C_\mu$ of $\mathcal{Z}_n$. The purpose of this paper is to present a general method to compute such an expansion.

This problem had been solved before by Jucys~\cite{Ju} for $f=e_k$,  the elementary symmetric function, and by Lascoux and Thibon~\cite{LT} for $f=p_k$, the power sum symmetric function. Our method provides a new proof for these classical results, but also allows to handle many new cases. 

In this paper we consider $f=h_k$, the complete symmetric function and $f=P_k(z)$ the one-row Hall-Littlewood symmetric function, which interpolates between $h_k$ and $p_k$. But, among others, our method also works with the product $f=h_ke_l$ or with $f=s_{(k,1^l)}$, the Schur function associated  with hooks.

It should be emphasized that our method is not performed in the symmetric algebra $\mathbb{C}[S_n]$, but rather in the shifted symmetric algebra~\cite{KO,OO}. Actually given a partition $\la$ and $\chi^\la$ the character of the corresponding irreducible representation, by a celebrated result of Jucys~\cite[eq. (12)]{Ju} we have 
\[f(J_1,\ldots,J_n)\, \chi^\la=f(A_\la)\, \chi^\la,\]
where $A_\la$ denotes the alphabet of ``contents'' of $\la$ and $\chi^\la$ stands for the central element $\sum_{\sigma\in S_n} \chi^\la(\sigma) \sigma$.

This fundamental property allows us to translate the class expansion of the central element $f(J_1,\ldots,J_n)$ in terms of the content evaluation $f(A_\la)$.  Actually if we define the central character $\ta^\la_\mu$ by $C_\mu\, \chi^\la=\ta^\la_\mu\, \chi^\la$, we may write \textit{equivalently}
\begin{equation*}
f(J_1,\ldots,J_n)=\sum_{|\mu|=n}  a_{\mu}(n) \, C_\mu\quad
\textrm{or} \qquad
f(A_\la)=\sum_{|\mu|=n}  a_{\mu}(n) \, \ta^\la_\mu.
\end{equation*}
In this paper we consider the second equality, which connects two shifted symmetric functions and can be studied by analytic means.

This method is indirect but presents the advantage of having a very natural extension in the framework of Jack polynomials. In that context the symmetric algebra and the Jucys-Murphy elements have not yet been generalized, but the algebra of $\al$-shifted symmetric functions is very well known. 

The paper is organized as follows. Section 2 is devoted to general facts about the symmetric group. Section 3 recalls results about the transition measure. Section 4 presents our tools and a summary of our method. The latter is used in Sections 5 and 6 to recover the classical results of Jucys~\cite{Ju} and Lascoux-Thibon~\cite{LT}. Sections 7 and 8 are respectively devoted to the new cases of complete symmetric functions and Hall-Littlewood symmetric functions. The generating functions associated with these class expansions are considered in Section 9. An extension of our method in the framework of Jack polynomials is briefly sketched at the end.

\section{Generalities and notations}

We recall some notions about the symmetric group and its representations, referring the reader to~\cite{CST} for a detailed account. In this paper $n$ is an arbitrary positive integer.

\subsection{Permutations and partitions}

A partition $\la= (\la_1,...,\la_r)$
is a finite weakly decreasing
sequence of nonnegative integers, called parts. The number
$l(\la)$ of positive parts is called the length of
$\la$, and $|\la| = \sum_{i = 1}^{r} \la_i$
the weight of $\la$. For any integer $i\geq1$,
$m_i(\la) = \textrm{card} \{j: \la_j  = i\}$
is the multiplicity of the part $i$ in $\la$. We write
$\la= (1^{m_1(\la)},2^{m_2(\la)},3^{m_3(\la)},\ldots)$, and $\la \vdash n$ for $|\la|=n$.

We denote by $\la^{\prime}$ the partition conjugate to $\la$, with parts given by $\la^{\prime}_i=\sum_{j \ge i} m_j(\la)$. We identify $\la$ with its Ferrers diagram  
$\{ (i,j) : 1 \le i \, \le l(\la), 1 \le j \le {\la}_{i} \}$. We set
\[z_\la  = \prod_{i \ge  1} i^{m_i(\lambda)} m_i(\lambda) !,\qquad
H_\la=\prod_{(i,j)\in \la} (\la_i+\la^\prime_j-i-j+1).\]

For any partition $\la$ and any integer $1 \le i \le l(\la)+1$, we denote by $\la^{(i)}$ the partition $\mu$ (if it exists) such that $\mu_j=\la_j$ for $j\neq i$ and $\mu_i=\la_i +1$. Similarly for any integer $1 \le i \le l(\la)$, we denote by $\la_{(i)}$ the partition $\nu$ (if it exists) such that $\nu_j=\la_j$ for $j\neq i$ and $\nu_i=\la_i -1$. 

Given some positive integers $p,q$ and a partition $\rho$, we denote by $\rho \setminus (p) \cup (q)$ the partition (if it exists) obtained by removing a part $p$ and adding a part $q$ to $\rho$. We denote by $\overline{\rho}$ the partition obtained by erasing all parts $1$ of $\rho$. Thus $m_1(\overline{\rho}) = 0$ and $\rho = \overline{\rho}\cup 1^{|\rho|-|\overline{\rho}|}$. Conversely we denote by $\tilde{\rho}$ the partition obtained by adding parts $1$ to $\rho$ up to the weight $n$. Thus $m_1(\tilde{\rho}) = m_1(\rho)+n-|\rho|$ and $\tilde{\rho}=\rho\cup 1^{n-|\rho|}$.

Let $S_n$ be the group of permutations of $n$ letters, $\mathbb{C}[S_n]$ its group algebra and $\mathcal{Z}_n$ the center of $\mathbb{C}[S_n]$. Each permutation $\sigma \in S_n$ factorizes uniquely as a product of disjoint cycles, whose respective lengths are ordered such as to form a partition $\mu=(\mu_1,\ldots,\mu_r)$ with weight $n$. This partition, called the cycle-type of $\sigma$, determines each permutation up to conjugacy in $S_n$. Given a partition $\mu \vdash n$, we denote by $C_\mu$ the conjugacy class of permutations having cycle-type $\mu$. 

We view any central function $\chi$ on $S_n$ as the formal sum $\sum_{\sigma\in S_n}\chi(\sigma) \sigma \in \mathcal{Z}_n$. We identify each $C_\mu$ with its characteristic function, hence with the formal sum of its elements. The set $\{C_\mu, \mu \vdash n\}$ forms a basis of $\mathcal{Z}_n$. 

\subsection{Symmetric functions}

Let $A=\{a_1,a_2,a_3,\ldots\}$ a (possibly infinite) set of
independent indeterminates, called an alphabet. The generating functions
\begin{equation*}
E_z(A)=\prod_{a\in A} (1 +za) =\sum_{k\geq0} z^k\, e_k(A),\qquad
H_z(A)=\prod_{a\in A}  (1-za)^{-1} = \sum_{k\geq0} z^k\, h_k(A)
\end{equation*}
define symmetric functions known as respectively elementary and complete. The power sum symmetric functions
are defined by $p_{k}(A)=\sum_{i \ge 1} a_i^k$. For any partition $\mu$, we define functions $e_\mu$, 
$h_\mu$ or $p_\mu$ by
\[f_{\mu}= \prod_{i=1}^{l(\mu)}f_{\mu_{i}}=\prod_{k\geq1}f_k^{m_{k}(\mu)},\]
where $f_{k}$ stands for $e_k$, $h_k$  or $p_k$. 

When $A$ is infinite, each of the three sets of functions $e_k$, 
$h_k$ or $p_k$ forms an algebraic basis of
$\mathbb{S}$, the symmetric algebra with coefficients in $\mathbf{R}$. Each of the sets of functions $e_\mu$, $h_\mu$, $p_\mu$ is a linear basis of this algebra. Two other linear bases are formed by the Schur functions $s_\la$ and by the monomial symmetric functions $m_{\la}$, defined as the sum of all distinct monomials whose exponent is a permutation of $\la$.

\subsection{Shifted symmetric functions}

Although the theory of symmetric functions goes back to the early 19th century, shifted symmetric functions are quite recent. They were introduced and studied in~\cite{KO,OO}.

Being given a finite alphabet  $A=\{a_1,a_2,\ldots,a_r\}$, a polynomial in $A$ is ``shifted symmetric'' if it is symmetric in the shifted variables $a_i-i$. When $A=\{a_1,a_2,a_3,\ldots\}$ is infinite, in analogy with symmetric functions, 
a ``shifted symmetric function'' $f$ is a family $\{f_i, i\ge 1\}$ such that  $f_i$ is a shifted symmetric polynomial in $(a_1,a_2,\ldots,a_i)$, together with the stability property $f_{j}(a_1,a_2,\ldots,a_i,0,\ldots,0)=f_i(a_1,a_2,\ldots,a_i)$ whenever $j\ge i$.  

This defines $\mathbb{S}^{\ast}$, the shifted symmetric algebra with coefficients in $\mathbf{R}$, which is algebraically generated by the ``shifted power sums''
\[p_k^{*}(A)=\sum_{i\ge 
1}\Big((a_i-i+1)_k-(-i+1)_k\Big).\]
Here for an indeterminate $z$ and any positive integer $p$, the \textit{falling} factorial
\[(z)_p = z(z-1) \ldots (z-p+1)=\sum_{i=1}^p s(p,i) \,z^i,\]
is the generating function of the Stirling numbers of the first kind $s(p,i)$. Conversely 
\[z^p=\sum_{i=1}^p S(p,i)(z)_i\]
defines the Stirling numbers of the second kind $S(p,i)$.

An element $f\in \mathbb{S}^{\ast}$ may be evaluated at any sequence 
$(a_1,a_2,\ldots)$ with finitely many non zero terms, hence at any partition $\la$. Moreover by analyticity, $f$ is entirely determined by its restriction $f(\la)$ to partitions. This identification is usually performed and $\mathbb{S}^{\ast}$ is considered as a function 
algebra on the set of partitions.

\subsection{Contents}

Given a partition $\la$, the content of any node $(i,j) \in \la$ is defined as $j-i$. Denote by $A_\la=\left\{j-i,\, (i,j) \in \la \right\}$ the finite alphabet of the contents of $\la$. The symmetric algebra $\mathbb{S}[A_\la]$ is generated by the power sums
\[p_k(A_\la) = \sum_{(i,j) \in \la} (j-i)^k=
\sum_{i=1}^{l(\la)} \sum_{j=1}^{\la_i} (j-i)^k.\]

It is well known~\cite{KO,OO} that the quantities $p_k(A_\la)$ are shifted symmetric polynomials of $\la$. Indeed for any integer $k\ge 1$, applying the identity $r(z)_{r-1}=(z+1)_{r}-(z)_{r}$, we have
\begin{align*}
p_k(A_\la)&= \sum_{r=1}^k \sum_{(i,j)\in \la} S(k,r)\,
(j-i)_{r}\\
&= \sum_{r=1}^k \frac{S(k,r)}{r+1} \sum_{i= 1}^{l(\la)} 
\Big((\la_i-i+1)_{r+1}-(-i+1)_{r+1}\Big)\\
&=\sum_{r=1}^k  \frac{S(k,r)}{r+1}\, p_{r+1}^{*}(\la).
\end{align*} 

As a straightforward consequence, the shifted symmetric algebra $\mathbb{S}^{\ast}$ is algebraically generated by the functions $p_k(A_\la), k\ge 1$ together with $p_1^{*}(\la)=|\la|$. The latter corresponds to the cardinal of the alphabet $A_\la$. 

In other words, any shifted symmetric function may be written \textit{in a unique way} $f(A_\la)$, with $f \in \mathbf{R}[\textrm{card},p_1,p_2,p_3,\ldots]$. As mentioned in~\cite[Proposition 2.4]{Ol}, this fact was already known to Kerov.

\subsection{Representations}

The irreducible representations of $S_n$ and their characters are labelled by partitions $\la \vdash n$. Given such a partition, we denote by $\chi^\la$ the corresponding irreducible character. We write ${\chi}^\la_\mu$ for its value $\chi^\la(\sigma)$ at any permutation $\sigma$ of cycle-type $\mu$, and $\textrm{dim}\,\la=\chi^\la_{1^n}$ for the dimension of the representation $\la$. The latter is given by
\[\textrm{dim}\,\la=  \frac{n!}{H_\la}= \frac{n!}{\prod_{i=1}^{l(\la)} (\la_i+l(\la)-i)!} \prod_{1\le i<j\le l(\la)}(\la_i-\la_j+j-i).\]

We denote
\[\hat{\chi}^\la_\mu=\frac{\chi^\la_\mu}{\textrm{dim}\,\la}, \qquad
\ta^\la_\mu=\frac{H_\la}{z_\mu} \,\chi^\la_\mu=\frac{n!}{z_\mu} \,\hat{\chi}^\la_\mu,\]
respectively the normalized character and central character of the representation $\la$. In $\mathcal{Z}_n$ we have the decompositions
\[\chi^\la=\sum_{\mu \vdash n} \chi^\la_\mu C_\mu, \qquad
C_\mu=\sum_{\la \vdash n} \frac{\chi^\la_\mu}{z_\mu} \chi^\la.\]
The family $\{\chi^\la/H_\la, \la \vdash n\}$ forms a basis of orthogonal idempotents in $\mathcal{Z}_n$, which yields
\[C_\mu\, \chi^\la =\sum_{\rho \vdash n} \frac{\chi^\rho_\mu}{z_\mu} \chi^\rho \chi^\la=\ta^\la_\mu\, \chi^\la.\]

\subsection{Jucys-Murphy elements}

For $1\le i\le n$ the Jucys-Murphy elements $J_i$ are defined by \[J_i=\sum_{1\le j<i} (ji),\]
where $(ji)$ denotes a transposition. These elements generate a maximal commutative subalgebra of $\mathbb{C}[S_n]$. Jucys~\cite{Ju} and Murphy~\cite{Mu2} proved the following fundamental property.
 
\begin{theo}
The center $\mathcal{Z}_n$ is formed of the elements $f(J_1,\ldots,J_n)$, with $f$ a symmetric function. These elements act on irreducible characters by
\[f(J_1,\ldots,J_n)\, \chi^\la=f(A_\la)\, \chi^\la.\]
\end{theo}

This result has the following important consequence.
\begin{coro}
For any symmetric functions $f,g$ (which may depend polynomially on $n$), the following statements are equivalent:
\begin{align*}
\mathrm{(i)}\qquad &f=g,\\
\mathrm{(ii)} \qquad &f(J_1,\ldots,J_n)=g(J_1,\ldots,J_n)
\quad \textrm{for} \; \textrm{any}\; n\ge 1,\\
\mathrm{(iii)}\qquad  &f(A_\la)=g(A_\la)\quad \textrm{for} \; \textrm{any}\; \textrm{partition}\; \la.
\end{align*}
\end{coro}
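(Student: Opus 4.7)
The plan is to prove the chain (i) $\Rightarrow$ (ii) $\Rightarrow$ (iii) $\Rightarrow$ (i). The first two implications will be formal consequences of the Jucys--Murphy theorem stated just above, while (iii) $\Rightarrow$ (i) will rest on the uniqueness property recalled at the end of Section 2.4.

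For (i) $\Rightarrow$ (ii), we simply specialise the alphabet of the identity $f=g$ to $(J_1,\ldots,J_n)$ for each $n$. For (ii) $\Rightarrow$ (iii), the Jucys--Murphy theorem gives $f(J_1,\ldots,J_n)\chi^\la=f(A_\la)\chi^\la$ and likewise for $g$; equality in $\mathbb{C}[S_n]$ then forces $\bigl(f(A_\la)-g(A_\la)\bigr)\chi^\la=0$ for every $\la\vdash n$, and since $\chi^\la\neq 0$ this yields $f(A_\la)=g(A_\la)$. Letting $n$ range over all positive integers gives~(iii).

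The core step is (iii) $\Rightarrow$ (i). We view a symmetric function with coefficients possibly depending on $n$ as an element of $\mathbf{R}[\mathrm{card},p_1,p_2,\ldots]$, where the formal variable $\mathrm{card}$ evaluates to $|\la|$ on a partition $\la$. By the uniqueness recalled at the end of Section 2.4, the content evaluation $f\mapsto\bigl(\la\mapsto f(A_\la)\bigr)$ identifies this polynomial ring with the shifted symmetric algebra $\mathbb{S}^{\ast}$, and is in particular injective; hence $f(A_\la)=g(A_\la)$ for every partition $\la$ forces $f=g$ as polynomials. The main obstacle is not computational but interpretative: one must fix the meaning of ``$f=g$'' in the presence of $n$-dependence, namely as equality in $\mathbf{R}[\mathrm{card},p_1,p_2,\ldots]$; once that convention is adopted, the equivalence of the three conditions follows at once from the Jucys--Murphy identity on one side and Kerov's uniqueness on the other.
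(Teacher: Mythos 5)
Your proposal is correct and follows exactly the paper's route: (i) $\Rightarrow$ (ii) $\Rightarrow$ (iii) read off from the Jucys--Murphy theorem, and (iii) $\Rightarrow$ (i) from the uniqueness of the expression $f(A_\la)$ with $f\in\mathbf{R}[\mathrm{card},p_1,p_2,\ldots]$ recalled at the end of Section 2.4. You merely spell out the details (in particular the role of $\chi^\la\neq 0$ and the interpretation of $n$-dependence via the variable $\mathrm{card}$) that the paper leaves as ``obvious''.
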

\begin{proof} In view of Theorem 2.1 the implications $(i) \Rightarrow (ii) \Rightarrow (iii)$ are obvious. The implication $(iii) \Rightarrow (i)$ is a consequence of the unicity result stated at the end of Section 2.4 (the polynomial dependence on the cardinal is there crucial).
\end{proof}

Now given some symmetric function $f\in \mathbb{S}$, consider
\begin{equation}
f(J_1,\ldots,J_n)=\sum_{|\mu|=n}  a_{\mu}(n) \, C_\mu, 
\end{equation}
the class expansion of its Jucys-Murphy specialization $f(J_1,\ldots,J_n)\in \mathcal{Z}_n$. By taking eigenvalues associated to $\chi^\la$ we get
\begin{equation}
f(A_\la)=\sum_{|\mu|=n}  a_{\mu}(n) \, \theta^\la_\mu. 
\end{equation}
for any partition $\la \vdash n$. The central character $\ta^\la_\mu$ has been studied by several authors~\cite{KO,OO,Co,La}. It is well known that it can be extended to a shifted symmetric function of $\la$. Thus the previous equality holds in $\mathbb{S}^{\ast}$. We may \textit{equivalently} study this decomposition in $\mathbb{S}^{\ast}$, rather than the original one in $\mathcal{Z}_n$.

\subsection{Inverse problem}

We may also consider the inverse problem, and look for an expression of each class $C_\mu$ as the Jucys-Murphy specialization
\begin{equation*}
C_\mu=f_\mu(J_1,\ldots,J_n) 
\end{equation*}
of some symmetric function $f_\mu$ (depending polynomially on $n$). 
This amounts to write the corresponding central character as the content evaluation of $f_\mu$, namely
\[\theta^\la_\mu=f_\mu(A_\la).\]
Moreover it is equivalent to write (2.1), (2.2) or
\begin{equation}
f=\sum_{|\mu|=n}  a_{\mu}(n) \, f_\mu. 
\end{equation}
In other words, the symmetric functions $f_\mu$ form a basis of $\mathbb{S}$ (depending polynomially on $n$)~\cite{KO,Co}.

The functions $f_\mu$ have been made explicit in~\cite{La}, up to a constant factor. Actually the central character $\theta^\la_\mu$ may be written as
\[\theta^\la_\mu=\frac{n!}{z_\mu} \,\hat{\chi}^\la_\mu
=\frac{(n)_{|\overline{\mu}|}}{z_{\overline{\mu}}}\,\hat{\chi}^\la_\mu =z_{\overline{\mu}}^{-1}g_\mu(A_\la),\]
where $g_\mu$ is some symmetric function (depending polynomially on $n$), explicitly given in~\cite{La} in terms of auxiliary symmetric functions.

In other words we have $f_\mu=z_{\overline{\mu}}^{-1}g_\mu$. Tables giving  $g_\mu$ for $|\mu|-l(\mu)\le 14$ are available on a web page~\cite{W}.

\subsection{Dependence on $n$}

The notion of partial permutation of $\{1,\dots,n\}$ has been introduced in~\cite{IK}. It leads to define an abstract algebra $\mathbb{B}$, a basis of which is formed by elements $B_\rho$ indexed by all partitions $\rho$. 

There is an isomorphism $\iota$ between this algebra and the shifted symmetric algebra $\mathbb{S}^\ast$, which may be described as follows~\cite[Theorem 9.1]{IK}. For any partition $\rho$ with $|\rho|\le n$ we have
\begin{equation*}
\begin{split}
\iota(B_\rho)(\la)&=
\frac{(n)_{|\rho|}}{z_\rho}\,\hat{\chi}^\la_{\tilde{\rho}}\\
&=\binom{n-|\rho|+m_1(\rho)}{m_1(\rho)}\, \frac{n!}{z_{\tilde{\rho}}}\,\hat{\chi}^\la_{\tilde{\rho}}\\
&=\binom{n-|\overline{\rho}|}{m_1(\rho)}\, \theta^\la_{\tilde{\rho}},
\end{split}
\end{equation*}
with $\la \vdash n$. For $|\rho|> n$ we have $\iota(B_\rho)(\lambda)=0$.

This isomorphism implies that the decomposition (2.2) of the shifted symmetric function $f(A_\la)$ takes the form
\[f(A_\la)=\sum_{|\rho|\le n}c_\rho \, \binom{n-|\overline{\rho}|}{m_1(\rho)}\, \theta^\la_{\tilde{\rho}},\]
the coefficients $c_\rho$ being independent of $n$. Equivalently (2.1) may be written as
\[f(J_1,\ldots,J_n)=\sum_{|\rho|\le n}c_\rho \, \binom{n-|\overline{\rho}|}{m_1(\rho)} \, C_{\tilde{\rho}}.\]
In other words $a_{\mu}(n)$ may be written as
\[a_{\mu}(n)=\sum_{\rho}c_\rho \, \binom{n-|\overline{\rho}|}{m_1(\rho)},\]
summed over partitions $\rho$ satisfying $\overline{\rho}=\overline{\mu}$, or equivalently $\tilde{\rho}=\mu$.

\section{The transition measure}

Given a partition $\la\vdash n$, the transition measure $\omega_\la$ is a probability measure on the real line, studied by Kerov~\cite{K1,K2} and others.

For any $i=1,\ldots, l(\la)+1$ we define the transition probabilities
\[c_i(\la)= \frac{H_\la}{H_{\la^{(i)}}}=\frac{1}{n+1} \,\frac{\textrm{dim}\,\la^{(i)}}{\textrm{dim}\,\la},\]
if  the partition $\la^{(i)}$ exists, and $0$ otherwise. We have easily
\[c_i(\la) = \frac {1}{\la_i+l(\la)-i+2}
\prod_{\begin{subarray}{c}j=1 \\ j \neq i\end{subarray}}^{l(\la)+1} 
\frac{\la_i-\la_j+j-i+1}
{\la_i-\la_j+j-i}.\]

We consider the discrete measure 
\[\omega_\la=\sum_{i=1}^{l(\la)+1} c_i(\la)\, \delta_{\la_i-i+1},\]
where $\delta_{u}$ is the Dirac measure at $u$. It is a probability measure, supported by the points $\la_i-i+1$ such that $\la^{(i)}$ exists.

We denote the moments of $\omega_\la$ by
\begin{equation}
\sigma_k(\la)=\sum_{i=1}^{l(\la)+1} c_i(\la)\,(\la_i-i+1)^k.
\end{equation}
The moment generating series of $\omega_\la$ is given by
\begin{equation}
\mathcal{M}_\la(z)= \sum_{k\ge 0} \sigma_k(\la) z^{-k-1}=\sum_{i=1}^{l(\la)+1} 
 \frac{c_i(\la)}{z-\la_i+i-1}.
\end{equation}

In the more general context of Jack polynomials, we have shown that $\mathcal{M}_\la$ may be alternatively written
\begin{equation}
\mathcal{M}_\la(z)= z^{-1} \,\frac{C_\la(-z)}{C_\la(-z-1)}\, \frac{C_\la(-z)}{C_\la(-z+1)},
\end{equation} 
where $C_\la(z)$ denotes the content polynomial
\[C_\la(z)=\prod_{(i,j)\in \la}(z+j-i).\]
A proof is obtained by setting $\alpha=1$ in a more general result, proved in~\cite[Theorem 8.1, p. 3470]{La1} by using Lagrange interpolation.

We may identify the developments of (3.2) and (3.3) in descending powers of $z$. By~\cite[Corollary 5.2, p. 3464]{La1} (written for $y=-1$) we obtain 
\begin{equation*}\sigma_k(\la)=f_k(A_\la),
\end{equation*}
where
\begin{equation}
f_k=\sum_{\begin{subarray}{c}q,r \ge 0 \\ q+2r \le k\end{subarray}} \sum_{s=0}^{\mathrm{min}(r,k-2r)}
\binom{n+r-1}{r-s} \sum_{|\mu| = k-2r} 
\npbi{\mu}{q}{s}\,z_{\mu}^{-1}p_\mu
\end{equation}
is a symmetric function depending on $n$. Here $\npbi{\mu}{q}{s}$ is some positive integer explicitly known (see~\cite{La2},~\cite[p. 3459]{La1} or~\cite[p. 392]{La}), in particular by a generating function.

In view of Section 2.4, the moments $\sigma_k(\la)$ are shifted symmetric functions. In this paper we shall mainly need the following elementary values
\begin{equation}
\sigma_0(\la)=1,\qquad \sigma_1(\la)=0,\qquad \sigma_2(\la)=n,
\qquad \sigma_3(\la)=2p_1(A_\la).
\end{equation}
But we may also mention 
\begin{equation}
\begin{split}
\sigma_4(\la)&=3p_2(A_\la)+\binom{n+1}{2},\\
\sigma_5(\la)&=4p_3(A_\la)+2(n+1)p_1(A_\la),\\
\sigma_6(\la)&=5p_4(A_\la)+3(n+1)p_2(A_\la)+2p_2(A_\la)+2p_1^2(A_\la)+\binom{n+2}{3}.
\end{split}
\end{equation}

Since the moments may be written $\sigma_k(\la)=f_k(A_\la)$ with $f_k$ a symmetric function depending on $n$, we may also consider the central element 
$$M^{(k)}_n=f_k(J_1,\ldots,J_n).$$
Biane~\cite{B1,B2} has shown that $M^{(k)}_n=\pi(J_{n+1}^k)$, with $\pi$ the orthogonal projection of $\mathbb{C}[S_{n+1}]$ onto $\mathbb{C}[S_n]$. By Jucys' result, for any $\la \vdash n$ we have
$$M^{(k)}_n \,\chi^\la=\sigma_k(\la)\,\chi^\la.$$
It is natural to study the equivalent expansions
\begin{equation*}
\begin{split}
M^{(k)}_n=\sum_{|\mu|=n} s^{(k)}_\mu(n) \, C_{\mu}&= \sum_{\rho}\mathbf{s}^{(k)}_\rho \, \binom{n-|\overline{\rho}|}{m_1(\rho)} \, C_{\tilde{\rho}},\\
\sigma_k(\la)=\sum_{|\mu|=n} s^{(k)}_\mu(n) \, \theta^\la_{\mu}&= \sum_{\rho}\mathbf{s}^{(k)}_\rho \, \binom{n-|\overline{\rho}|}{m_1(\rho)} \, \theta^\la_{\tilde{\rho}}.
\end{split}
\end{equation*}
We shall give them explicitly at the end of Section 6.

\section{Tools and method}

In this paper we make a crucial use of some linear relations between central characters. The following auxiliary material is needed.

\subsection{Differential operators}

In the space $\mathbf{R}^N$ of $N$ variables $(x_1,\ldots,x_N)$,
for any integer $k\ge 0$ we introduce the differential operators
\begin{equation*}
\begin{split}
E_k&=\sum_{i=1}^N x_i^k\, \frac{\partial}{\partial x_i},\\
D_k&=\frac{1}{2}\sum_{i=1}^N x_i^k\, \frac{\partial^2}{\partial x_i^2}
+ \sum_{\begin{subarray}{c}i,j=1\\i\neq j\end{subarray}}^N 
\frac{x_i^k}{x_i-x_j}\frac{\partial}{\partial x_i}.
\end{split}
\end{equation*} 
It is not difficult to check that $D_0=[E_0,D_1]$ and $2D_1=[E_0,D_2]$. The following result is proved by an easy induction on $N$.
\begin{lem} For any integer $r\ge 2$, we have
\[2\sum_{\begin{subarray}{c}i,j=1\\i\neq j\end{subarray}}^N 
\frac{x_i^r}{x_i-x_j}=\sum_{i=1}^{r-2}p_i\,p_{r-i-1}+(2N-r)p_{r-1}.\]
\end{lem}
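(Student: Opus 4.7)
The plan is to avoid induction on $N$ entirely and instead derive the identity by direct symmetrization of the left-hand side. The key observation is that the summand at $(i,j)$ and the summand at $(j,i)$ combine neatly via the geometric identity for $(x_i^r-x_j^r)/(x_i-x_j)$, and the resulting expression decomposes cleanly into products of power sums, with the correction term $(2N-r)p_{r-1}$ appearing from the interplay of the diagonal subtractions.

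The first step is to note that
\[\frac{x_i^r}{x_i-x_j} + \frac{x_j^r}{x_j-x_i} = \frac{x_i^r - x_j^r}{x_i-x_j} = \sum_{a=0}^{r-1}x_i^a\,x_j^{r-1-a}.\]
Thus, doubling the left-hand side in order to amalgamate the two orderings of each pair gives
\[2\sum_{\substack{i,j=1\\i\neq j}}^N\frac{x_i^r}{x_i-x_j} = \sum_{a=0}^{r-1}\sum_{\substack{i,j=1\\i\neq j}}^N x_i^a\,x_j^{r-1-a}.\]
The second step is to use the elementary identity $\sum_{i\neq j}x_i^a\,x_j^{r-1-a}=p_a\,p_{r-1-a}-p_{r-1}$, where by convention $p_0 = N$, obtained by subtracting the diagonal $i=j$ from the full double sum.

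The last step is pure bookkeeping: the boundary indices $a=0$ and $a=r-1$ each give $Np_{r-1}-p_{r-1}=(N-1)p_{r-1}$ and produce no genuine product $p_a p_{r-1-a}$ in the prescribed range $1\le a\le r-2$, while each interior index contributes $p_a p_{r-1-a}-p_{r-1}$. Summing yields
\[2(N-1)p_{r-1} + \sum_{a=1}^{r-2}p_a\,p_{r-1-a} - (r-2)\,p_{r-1} = \sum_{a=1}^{r-2}p_a\,p_{r-1-a} + (2N-r)\,p_{r-1},\]
which is the claim after relabeling $a\to i$. There is no genuine obstacle in this approach; the one subtle point is consistent use of the convention $p_0 = N$ when splitting the boundary and interior contributions of the index $a$, which is what ultimately produces the coefficient $2N-r=2(N-1)-(r-2)$ of $p_{r-1}$. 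A small sanity check at $r=2$ (where the product sum is empty) and $r=3$ (where it reduces to $p_1^2$) confirms the numerics.
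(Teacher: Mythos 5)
Your proof is correct. Each step checks out: the relabeling $i\leftrightarrow j$ turns the doubled sum into $\sum_{i\neq j}(x_i^r-x_j^r)/(x_i-x_j)$, the geometric expansion and the diagonal subtraction $\sum_{i\neq j}x_i^ax_j^{r-1-a}=p_ap_{r-1-a}-p_{r-1}$ (with $p_0=N$) are both standard, and the final bookkeeping $2(N-1)-(r-2)=2N-r$ is right; the degenerate cases $r=2,3$ also work out as you note. However, your route is genuinely different from the paper's: the paper states only that the lemma ``is proved by an easy induction on $N$,'' i.e.\ one fixes $r$, verifies the (vacuous) base case, and tracks the new terms $\frac{x_i^r}{x_i-x_{N+1}}+\frac{x_{N+1}^r}{x_{N+1}-x_i}$ contributed when a variable is adjoined --- which is essentially your pairwise identity applied one pair at a time inside an inductive wrapper. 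Your direct symmetrization is uniform in $N$, avoids the induction entirely, and makes the provenance of the coefficient $2N-r$ (namely $2(N-1)$ from the boundary indices $a=0,r-1$ minus $r-2$ from the interior diagonal corrections) completely transparent; the paper's induction buys nothing extra here beyond brevity of statement. Both are valid, and yours is arguably the cleaner writeup of the same underlying cancellation.
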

For $r=1$ the left-hand side is obviously $N(N-1)$. After some easy but tedious computation, for any integer $k\ge 2$ and any partition $\mu$, this lemma yields
\begin{equation}
\begin{split}
2D_k\,p_\mu&=
\sum_{r, s\ge 1} rsm_r(\mu)(m_s(\mu)-\delta_{rs}) 
\,p_{\mu \setminus (r,s)\cup (r+s+k-2)}\\
&+\sum_{r\ge 1} rm_r(\mu) \sum_{i=1}^{r+k-3}
\,p_{\mu \setminus (r)\cup (i,r-i+k-2)}
+(2N-k)\sum_{r\ge 1}rm_r(\mu)\,p_{\mu \setminus (r)\cup (r+k-2)}.
\end{split}
\end{equation}
Some attention is needed for $k=1$, where we get
\begin{equation}
\begin{split}
2D_1\,p_\mu&=
\sum_{r, s\ge 1} rsm_r(\mu)(m_s(\mu)-\delta_{rs}) 
\,p_{\mu \setminus (r,s)\cup (r+s-1)}+N(N-1)m_1(\mu)p_{\mu \setminus (1)}\\
&+\sum_{r\ge 3} rm_r(\mu) \sum_{i=1}^{r-2}
\,p_{\mu \setminus (r)\cup (i,r-i-1)}
+(2N-1)\sum_{r\ge 2}rm_r(\mu)\,p_{\mu \setminus (r)\cup (r-1)}.
\end{split}
\end{equation}
We have also
\begin{equation}
\begin{split}
E_0\,p_{\mu}&=\sum_{r\ge 2}rm_{r}(\mu) \,p_{\mu \setminus (r)\cup (r-1)}+Nm_1(\mu)p_{\mu\setminus (1)},\\ E_2\,p_{\mu}&=\sum_{r\ge 1}rm_{r}(\mu) \,p_{\mu \setminus (r)\cup (r+1)}.
\end{split}
\end{equation}
In these expressions, observe that the term $m_1(\mu)p_{\mu \setminus (1)}$ may also been written ${\partial}/{\partial p_1}(p_\mu)$. 

Finally we introduce the differential operators
\begin{equation*}
\begin{split}
\mathbf{E}&=E_0-N\frac{\partial}{\partial p_1},\\ 
\mathbf{D}&=2D_1-(2N-1)\mathbf{E}-N(N-1)\frac{\partial}{\partial p_1}.
\end{split}
\end{equation*} 
These operators are independent of $N$ because using (4.2)-(4.3), their action on power sums is given by 
\begin{equation*}
\begin{split}
\mathbf{E}\,p_{\mu}&=\sum_{r\ge 2}rm_{r}(\mu) \,p_{\mu \setminus (r)\cup (r-1)},\\ 
\mathbf{D}\,p_{\mu}&=\sum_{r, s\ge 1} rsm_r(\mu)(m_s(\mu)-\delta_{rs}) 
\,p_{\mu \setminus (r,s)\cup (r+s-1)}\\
&+\sum_{r\ge 3} rm_r(\mu) \sum_{i=1}^{r-2}
\,p_{\mu \setminus (r)\cup (i,r-i-1)}.
\end{split}
\end{equation*}
If we denote $\widehat{p}_{\mu}=z_{\mu}^{-1}p_{\mu}$ we have
\begin{equation}
\begin{split}
\mathbf{E}\,\widehat{p}_{\mu}=&\sum_{r\ge 1}r(m_{r}(\mu)+1)\, \widehat{p}_{\mu \setminus (r+1)\cup (r)},\\
\mathbf{D}\,\widehat{p}_{\mu}=&
\sum_{r,s\ge 1} (r+s-1)(m_{r+s-1}(\mu)+1)\, \widehat{p}_{\mu \setminus (r,s)\cup (r+s-1)}\\
+&\sum_{r,s \ge 1} rs(m_r(\mu)+1)(m_s(\mu)+1+\delta_{rs})\,
\widehat{p}_{\mu \setminus (r+s+1)\cup (r,s)},
\end{split}
\end{equation}
which is an elementary consequence of 
\begin{align*}
z_{\mu}^{-1}z_{\mu \setminus (r+1)\cup(r)}
&=\frac{r(m_r(\mu)+1)}{(r+1)m_{r+1}(\mu)},\\
z_{\mu}^{-1}z_{\mu \setminus (r,s)\cup (r+s-1)}&=\frac{(r+s-1)(m_{r+s-1}(\mu)+1)}{rsm_r(\mu)(m_s(\mu)-\delta_{rs})},\\
z_{\mu}^{-1}z_{\mu \setminus (r+s+1)\cup (r,s)}&=\frac{rs(m_r(\mu)+1)(m_s(\mu)+1+\delta_{rs})}{(r+s+1)m_{r+s+1}(\mu)}.
\end{align*}

\subsection{Action on Schur functions}

Let $\Delta_0= p_1$ considered as a multiplication operator acting on the symmetric algebra $\mathbb{S}$. For any $k \ge 1$ define the $k$-th nested commutator 
\begin{equation*}
\Delta_k= 
[D_2,[D_2,\cdots,[D_2,p_1]\cdots]].
\end{equation*} 
After some easy but tedious computation we have
\begin{equation}
\begin{split}
\Delta_1&=E_2+(N-1)p_1,\\
\Delta_2&=2D_3+E_2+(N-1)^2p_1.
\end{split}
\end{equation}

The Schur functions $s_{\la}(x_1,\ldots,x_N)$ are eigenfunctions of $D_2$, namely
\begin{equation*}
D_2 s_\la=\big(p_1(A_{\la})+|\la|(N-1)\big)s_\la.
\end{equation*}
A proof is obtained by setting $\alpha=1$ in a more general result, proved in~\cite[Theorem 3.1, p. 84]{S0}. This property and the Pieri formula
\[p_1s_\la=\sum_{i=1}^{l(\la)+1} s_{\la^{(i)}},\]
imply inductively
\begin{equation}
\Delta_k s_\la=\sum_{i=1}^{l(\la)+1} (\la_i+N-i)^k\, s_{\la^{(i)}}.
\end{equation} 
 
As a consequence of~\cite[Example 1.5.5, p. 75]{Ma} we have $p_1^\perp={\partial}/{\partial p_1}$, hence
\[\frac{\partial}{\partial p_1}s_\la= \sum_{i=1}^{l(\la)} s_{\la_{(i)}}.\]
From~\cite[Example 1.3.10, p. 47]{Ma} we deduce
\[
E_0s_\la= \lim\limits_{t\rightarrow 0} \,t^{-1} (s_\la(x_1+t,\ldots,x_N+t)-s_\la(x_1,\ldots,x_N))=\sum_{i=1}^{l(\la)} (N+\la_i-i) s_{\la_{(i)}}.\]
Therefore we obtain 
\[2D_1s_\la=[E_0,D_2]s_\la= \sum_{i=1}^{l(\la)} (N+\la_i-i)(N-1+\la_i-i) s_{\la_{(i)}},\]
Finally we get
\begin{equation}
\begin{split}
\mathbf{E}s_\la&= \sum_{i=1}^{l(\la)} (\la_i-i) s_{\la_{(i)}},\\
\mathbf{D}s_\la&= \sum_{i=1}^{l(\la)} (\la_i-i)^2 s_{\la_{(i)}},
\end{split}
\end{equation}
which provides another proof of the independence on $N$ of these operators.

\subsection{Central characters}

We are now in a position to obtain some linear relations between central characters. Our purpose is to evaluate
\[\sum_{i=1}^{l(\la)+1} 
c_i(\la) \, (\la_i-i+1)^k\,\ta^{\la^{(i)}}_\mu,\]
at least for the first values of $k$.
\begin{theo}
For any partitions $\la \vdash n$ and $\mu \vdash n+1$, we have
\begin{equation}
\sum_{i=1}^{l(\la)+1} 
c_i(\la) \, \ta^{\la^{(i)}}_\mu=\ta^\la_{\mu \setminus (1)},
\end{equation}
\begin{equation}
\sum_{i=1}^{l(\la)+1} 
c_i(\la) \, (\la_i-i+1)\,\ta^{\la^{(i)}}_\mu=\sum _{r\ge 1}
r(m_r(\mu)+1)\,\ta^\la_{\mu \setminus (r+1)\cup (r)},
\end{equation}
\begin{equation}
\begin{split}
\sum_{i=1}^{l(\la)+1} 
c_i(\la) \, (\la_i-i+1)^2\,\ta^{\la^{(i)}}_\mu&=
(2n-m_1(\mu)+1) \, \ta^\la_{\mu \setminus 1}\\
&+\sum_{r,s \ge 1} rs(m_r(\mu)+1)(m_s(\mu)+\delta_{rs}+1)\, 
\ta^{\la}_{\mu \setminus (r+s+1)\cup (r,s)}
\\ &+ \sum_{r,s \ge 2} (r+s-1)(m_{r+s-1}(\mu)+1)\,
\ta^{\la}_{\mu \setminus (r,s)\cup (r+s-1)}.
\end{split}
\end{equation}
\end{theo}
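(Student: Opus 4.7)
The idea is to reduce each of the three identities to a Hall-inner-product computation on symmetric functions. From the definitions of $c_i(\la)$ and $\ta^\la_\mu$ together with $\textrm{dim}\,\la=n!/H_\la$, one obtains the key identity
\[c_i(\la)\,\ta^{\la^{(i)}}_\mu=\frac{H_\la}{z_\mu}\,\chi^{\la^{(i)}}_\mu=\frac{H_\la}{z_\mu}\,\langle s_{\la^{(i)}},p_\mu\rangle,\]
where $\langle\cdot,\cdot\rangle$ is the Hall inner product (characterized by $\langle p_\mu,p_\nu\rangle=z_\mu\delta_{\mu\nu}$). The left-hand side of each identity therefore equals $\frac{H_\la}{z_\mu}\langle T_k s_\la,p_\mu\rangle=\frac{H_\la}{z_\mu}\langle s_\la,T_k^*\,p_\mu\rangle$, where $T_k$ is any symmetric-function operator satisfying $T_k s_\la=\sum_i(\la_i-i+1)^k s_{\la^{(i)}}$, and $T_k^*$ is its Hall adjoint. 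Expanding $T_k^*\,p_\mu$ in the power-sum basis yields a linear combination of $\chi^\la_\nu$'s, and these are converted back into $\ta^\la_\nu$'s via $\chi^\la_\nu=z_\nu\,\ta^\la_\nu/H_\la$.

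The operator $T_k$ is identified from the eigenvalue formula $\Delta_k s_\la=\sum_i(\la_i+N-i)^k s_{\la^{(i)}}$ by expanding $\la_i+N-i=(\la_i-i+1)+(N-1)$. For $k=0$ the Pieri rule gives $T_0=p_1$, whose Hall adjoint is $\partial/\partial p_1$; it sends $p_\mu$ to $m_1(\mu)\,p_{\mu\setminus(1)}$, and the identity $z_\mu=m_1(\mu)\,z_{\mu\setminus(1)}$ then yields (4.5). For $k=1$, formula (4.3) for $\Delta_1$ identifies $T_1=E_2$; by (4.2), its Hall adjoint sends $p_\mu$ to $\sum_{r\ge 1}(r+1)m_{r+1}(\mu)\,p_{\mu\setminus(r+1)\cup(r)}$. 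The ratio $z_{\mu\setminus(r+1)\cup(r)}/z_\mu=r(m_r(\mu)+1)/\bigl((r+1)m_{r+1}(\mu)\bigr)$ converts this into (4.6).

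For $k=2$, expanding $(\la_i+N-i)^2$ in $\Delta_2 s_\la$ and using the previous cases together with (4.3) for $\Delta_2$ gives $T_2=2D_3+(3-2N)E_2$. Formulas (4.1) and (4.2) show that the $N$-linear terms cancel in $T_2\,p_\mu$, so $T_2$ is an $N$-independent operator whose Hall adjoint acts on $p_\mu$ by
\begin{equation*}
\begin{split}
T_2^*\,p_\mu&=\sum_{r,s\ge 1}(r+s+1)\,m_{r+s+1}(\mu)\,p_{\mu\setminus(r+s+1)\cup(r,s)}\\
&\quad+\sum_{i,j\ge 1}ij\,m_i(\mu)(m_j(\mu)-\delta_{ij})\,p_{\mu\setminus(i,j)\cup(i+j-1)}.
\end{split}
\end{equation*}
Pairing with $s_\la$ and converting each $\chi^\la_\nu$ to $\ta^\la_\nu$ via the corresponding $z_\nu/z_\mu$ ratios produces the two double sums on the right-hand side of (4.7). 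The isolated term $(2n-m_1(\mu)+1)\,\ta^\la_{\mu\setminus(1)}$ comes from the sub-cases of the second sum with $i=1$ or $j=1$, for then $\mu\setminus(i,j)\cup(i+j-1)=\mu\setminus(1)$. Summing their coefficients gives $m_1(\mu)\bigl[m_1(\mu)-1+2(n+1-m_1(\mu))\bigr]=m_1(\mu)(2n+1-m_1(\mu))$, and the identity $m_1(\mu)/z_\mu=1/z_{\mu\setminus(1)}$ produces the announced form.

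The main obstacle is the bookkeeping in the $k=2$ case: verifying the cancellation of the $N$-dependence in $T_2\,p_\mu$ via (4.1) and (4.2), correctly handling the symmetry between $r$ and $s$ (which is where the factors $(m_s(\mu)+\delta_{rs}+1)$ in (4.7) originate via $z_\nu/z_\mu$ when $r=s$), and isolating the pairs with $i=1$ or $j=1$ to produce the extra $\ta^\la_{\mu\setminus(1)}$ term.
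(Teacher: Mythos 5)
Your proof is correct and takes essentially the same route as the paper: both rest on the Frobenius formula (equivalently $\chi^\la_\mu=\langle s_\la,p_\mu\rangle$), the operators $\Delta_k$ with their actions (4.1)--(4.4), the Pieri rule, and the same $z_\nu$-ratio bookkeeping, with your Hall-adjoint formulation being just the transpose of the paper's identification of coefficients of $p_\nu$ in $H_\la\,\Delta_k s_\la=\sum_\mu\ta^\la_\mu\,\Delta_k p_\mu$. The elimination of the $N$-dependence and the isolation of the $i=1$ or $j=1$ contributions giving $(2n-m_1(\mu)+1)\,\ta^\la_{\mu\setminus(1)}$ likewise mirror the paper's argument.
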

\begin{proof}
Since $\ta^\la_\mu=H_\la z_\mu^{-1}\chi^\la_\mu$, we may write the classical Frobenius formula
\begin{equation*}
s_{\la}=\sum_{\mu} z_\mu^{-1}\chi^\la_\mu \, p_\mu 
\end{equation*}
under the equivalent form
\[H_\la s_{\la}=\sum_{\mu} \ta^\la_\mu \, p_\mu.\]
We apply the differential operator $\Delta_k$ on both sides. As a consequence of (4.6) and $H_\la=c_i(\la) H_{\la^{(i)}}$ we obtain
\begin{align*}
H_\la \,\Delta_k \, s_\la
&=\sum_{i=1}^{l(\la)+1} c_i(\la) \, (\la_i+N-i)^k\, H_{\la^{(i)}}s_{\la^{(i)}}\\
&= 
\sum_{\nu} \Big( \sum_{i=1}^{l(\la)+1} c_i(\la) \, (\la_i+N-i)^k\,\ta^{\la^{(i)}}_\nu \Big) \,p_\nu\\
&=\sum_{\mu} \ta^\la_\mu \, \Delta_k\,p_\mu.
\end{align*}
Let us write this identity for $k=2$. By (4.5), (4.1) and (4.3) we get
\begin{align*}
\sum_{\nu} \Big( \sum_{i=1}^{l(\la)+1} &c_i(\la) \, (\la_i+N-i)^2\,\ta^{\la^{(i)}}_\nu \Big) \,p_\nu \\
&=\sum_{\mu} \ta^\la_\mu \,\Big(
\sum_{r, s\ge 1} rsm_r(\mu)(m_s(\mu)-\delta_{rs}) 
\,p_{\mu \setminus (r,s)\cup (r+s+1)} \\
&+\sum_{r\ge 1} rm_r(\mu) \sum_{i=1}^{r}
\,p_{\mu \setminus (r)\cup (i,r-i+1)}
+(2N-3)\sum_{r\ge 1}rm_r(\mu)\,p_{\mu \setminus (r)\cup (r+1)}\\
&+\sum_{r\ge 1}rm_{r}(\mu) \,p_{\mu \setminus (r)\cup (r+1)}
+(N-1)^2 \,p_{\mu \cup (1)}\Big).
\end{align*}
If we denote by $L^{(k)}_\mu$, $(k=0,1,2)$, the respective left-hand sides of (4.8)--(4.10), we have
\[\sum_{\nu} \Big( \sum_{i=1}^{l(\la)+1} c_i(\la) \, (\la_i+N-i)^2\,\ta^{\la^{(i)}}_\nu \Big) \,p_\nu=\sum_{\nu} \Big(
L^{(2)}_\nu +2(N-1)L^{(1)}_\nu+ (N-1)^2 L^{(0)}_\nu\Big) \,p_\nu.\]
Since these quantities are independent of $N$, we obtain $\sum_{\nu}L^{(k)}_\nu p_\nu$ by identification of the coefficients of $N-1$. 

Finally we identify the coefficients of power sums on both sides. Relations (4.8) and (4.9) are straightforward. For (4.10) some attention is needed with the term
\begin{equation}
\sum_{r\ge 1} rm_r(\mu) \sum_{i=1}^{r}
\,p_{\mu \setminus (r)\cup (i,r-i+1)},
\end{equation}
which should be written as
\[\sum_{r,s \ge 2} (r+s-1)m_{r+s-1}(\mu)\,
p_{\mu \setminus (r+s-1)\cup (r,s)}+(2|\mu| -m_1(\mu)) \,p_{\mu \cup (1)}.\]
The last term $(2|\mu|-m_1(\mu)) p_{\mu \cup (1)}$ is justified as follows. In (4.11) for $r\neq 1$, each case $i=1$ and $i=r$ contributes by $p_{\mu \cup (1)}$. For $r=1$ there is only one such contribution, obtained for $i=r=1$. Hence a total of $m_1(\mu)+2\sum_{r\ge 2} rm_r(\mu)=2|\mu|-m_1(\mu)$.
\end{proof}

\noindent\textit{Remark: }F\'eray~\cite[Appendix]{Fer3} has observed that
\[\sum_{i=1}^{l(\la)+1} 
c_i(\la) \, (\la_i-i+1)^k\,\ta^{\la^{(i)}}_\mu=
\hat{\chi}^\lambda( \pi( J_{n+1}^k C_\mu )),\]
with $\pi$ the orthogonal projection of $\mathbb{C}[S_{n+1}]$ onto $\mathbb{C}[S_n]$ and $C_\mu\in\mathbb{C}[S_{n+1}]$. This formula provides an interesting connection between the analytic and combinatorial points of view. The proof is an extension of the one given by Biane~\cite[Proposition 3.3]{B1} for $\mu=1^{n+1}$.

\subsection{Our method}

Consider a symmetric function $f$ and its central character expansion
\[f(A_\la)=\sum_{|\mu|=n}  a_{\mu}(n) \, \ta^\la_\mu.\]
We sketch the main steps of our method to compute $a_{\mu}(n)$. 

\noindent\textit{First step:} By definition we have $A_{\la^{(i)}}=A_{\la}\cup \{\la_i-i+1\}$. Therefore we may write
\begin{equation}
f(A_{\la^{(i)}})=f(A_\la)+ \sum_{k\ge 1} g_k(A_\la) (\la_i-i+1)^k,
\end{equation}
for a finite family of symmetric functions $g_k$. When $f$ is specified this development may be found explicitly. But the existence of $g_k$ is a general fact~\cite[Example 1.5.3 (b), p. 75]{Ma}: actually $g_k=h_k^{\perp}f$.

\noindent\textit{Second step:} Using (3.1) the previous expansion implies for $r=0,1,2$,
\begin{equation}
\sum_{i=1}^{l(\la)+1} c_i(\la)\,(\la_i-i+1)^r\,f(A_{\la^{(i)}})=
\sigma_r(\la) f(A_\la)+ \sum_{k\ge 1} \sigma_{k+r}(\la)g_k(A_\la).
\end{equation}
Then it may be possible (but not always) to eliminate the quantities $\sigma_i(\la)$. This can be done by performing some linear combinations and using the explicit expressions (3.5)--(3.6). This elimination depends strongly on the specific form of $f$. 

In the most elementary situation (always encountered in this paper), this elimination transforms (4.13) into
\begin{equation}
\sum_{i=1}^{l(\la)+1} c_i(\la)\,(\la_i-i+1)^r\,f(A_{\la^{(i)}})=
F(A_\la)+\sum_{i=1}^{l(\la)+1} c_i(\la)\,(\la_i-i+1)^s\,G(A_{\la^{(i)}}),
\end{equation}
for some $s=0,1,2$ and some symmetric functions $F$, $G$ (all three depending on $r$). 

\noindent\textit{Third step:} Writing the content evaluation of these functions as
\[F(A_\la)=\sum_{|\mu|=n}  F_{\mu}(n) \, \ta^\la_\mu, \quad
G(A_\la)=\sum_{|\mu|=n}  G_{\mu}(n) \, \ta^\la_\mu,\]
the previous relation becomes
\begin{align*}
&\sum_{|\mu|=n+1}  a_{\mu}(n+1) \Big(\sum_{i=1}^{l(\la)+1} c_i(\la)\,(\la_i-i+1)^r\,  \ta^{\la^{(i)}}_\mu\Big)\\=&
\sum_{|\nu|=n}  F_{\nu}(n) \, \ta^\la_\nu +
\sum_{|\mu|=n+1}  G_{\mu}(n+1) \Big(\sum_{i=1}^{l(\la)+1} c_i(\la)\,(\la_i-i+1)^s\,  \ta^{\la^{(i)}}_\mu\Big).
\end{align*}
Applying (4.8)--(4.10), the quantities between brackets can be evaluated in terms of the characters $\ta^{\la}_\nu$. 

\noindent\textit{Fourth step:} The central characters $\ta^{\la}_\nu$'s are linearly independent. By identification of their coefficients, we obtain some linear relations between $a_{\mu}(n+1)$ and the $F_{\mu}(n)$, $G_{\mu}(n+1)$'s. 

\noindent\textit{Final step:} These relations may be used to define $a_{\mu}(n)$ inductively.  

This method will appear much clearer below, when applied to $f=e_k, p_k, h_k$ and the Hall-Littlewood function $f=P_k(z)$.

\section{Elementary functions}

As an easy example, let us first apply our method to recover the classical result of  Jucys~\cite{Ju}.
\begin{theo}
For any positive integer $k$ we have
\[e_k(J_1,\ldots,J_n)=\sum_{\begin{subarray}{c}|\mu|=n\\
l(\mu)=n-k \end{subarray}}  C_\mu.\]
\end{theo}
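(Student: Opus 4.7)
The plan is to execute the five-step method of Section~4.3 with $f = e_k$.

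\emph{Step 1.} Since $A_{\la^{(i)}} = A_\la \cup \{\la_i - i + 1\}$, the generating function identity $E_z(A_{\la^{(i)}}) = (1 + z(\la_i - i + 1))\,E_z(A_\la)$ yields, upon extracting the coefficient of $z^k$,
\[
e_k(A_{\la^{(i)}}) = e_k(A_\la) + (\la_i - i + 1)\, e_{k-1}(A_\la),
\]
so in the notation of (4.9) only $g_1 = e_{k-1}$ is nonzero.

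\emph{Step 2.} Multiplying by $c_i(\la)(\la_i - i + 1)$ and summing over $i$, and using $\sigma_1(\la) = 0$ together with $\sigma_2(\la) = n$ from (3.5), I obtain
\[
\sum_{i=1}^{l(\la)+1} c_i(\la)(\la_i - i + 1)\, e_k(A_{\la^{(i)}}) = n\, e_{k-1}(A_\la).
\]
The parallel $r=0$ computation, invoking $\sigma_0 = 1$ and $\sigma_1 = 0$, produces the stability relation $\sum_i c_i(\la)\, e_k(A_{\la^{(i)}}) = e_k(A_\la)$.

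\emph{Steps 3--4.} Writing $e_k(A_\la) = \sum_\mu a^{(k)}_\mu(n)\,\ta^\la_\mu$ in both identities, invoking (4.5) and (4.6) to evaluate the weighted sums over $i$, and equating coefficients of each $\ta^\la_\mu$ (by linear independence of central characters as shifted symmetric functions of $\la$), I arrive at the pair of recurrences
\[
a^{(k)}_{\mu \cup (1)}(n+1) = a^{(k)}_\mu(n), \qquad \sum_{r:\,m_r(\mu) \geq 1} r\,m_r(\mu)\, a^{(k)}_{\mu \setminus (r) \cup (r+1)}(n+1) = n\, a^{(k-1)}_\mu(n).
\]

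\emph{Step 5.} I induct on $k$. The base $k = 0$ is immediate since $e_0(J_1, \ldots, J_n) = 1 = C_{(1^n)}$, matching $[l(\mu) = n]$. For the inductive step, I check that the conjectured values $\bar a^{(k)}_\mu(n) := [l(\mu) = n - k]$ satisfy both recurrences: stability is immediate from $l(\mu \cup (1)) = l(\mu) + 1$, and the second, using $l(\mu \setminus (r) \cup (r+1)) = l(\mu)$, reduces to $[l(\mu) = n+1-k]\sum_r r\,m_r(\mu) = n\,[l(\mu) = n+1-k]$, which agrees with $n\,a^{(k-1)}_\mu(n) = n\,[l(\mu) = n-k+1]$ by the inductive hypothesis. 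The main obstacle is then to ensure that these two recurrences, together with the base, uniquely determine $a^{(k)}$: the stability relation handles every $\nu$ with $m_1(\nu) \geq 1$ by reducing to smaller $n$, and a secondary induction on the weight of the kernel $\bar\nu$---applying the second recurrence first at $\mu$ with $m_1(\mu) = 1$ (which isolates $a^{(k)}_{\bar\mu \cup (2)}$) and then at $\mu$ with $m_1(\mu) = 0$ to climb through partitions of increasing minimum part---forces $a^{(k)} = \bar a^{(k)}$ throughout.
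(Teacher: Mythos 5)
Your proposal is correct and follows essentially the same route as the paper: the same expansion $e_k(A_{\la^{(i)}})=e_k(A_\la)+u_i\,e_{k-1}(A_\la)$, the same two relations from $\sigma_0,\sigma_1,\sigma_2$, the same pair of recurrences via (4.5)--(4.6), and the same triple induction on $k$, on $n$, and on the lowest part of $\mu$ (your "verify the candidate satisfies the recurrences, then show the recurrences determine the coefficients uniquely" is just a reorganization of the paper's direct computation in its final step). No gaps.
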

\begin{proof} Writing
\[e_k(A_\la)=\sum_{|\mu|=n} a^{(k)}_\mu(n) \, \ta_\mu^\la,\]
we must equivalently prove that $a^{(k)}_\mu(n)=\delta_{l(\mu),n-k}$.
 
\noindent\textit{First step:} Denoting $u_i=\la_i-i+1$, the generating function $E_z(A_{\la^{(i)}})$ satisfies
\[E_z(A_{\la^{(i)}})=E_z(A_\la \cup u_i)=E_z(A_{\la})(1+zu_i).\]
Hence the expansion (4.12) takes the very simple form
\begin{equation*}
e_k(A_{\la^{(i)}})=e_k(A_{\la})+e_{k-1}(A_{\la})u_i.
\end{equation*}

\noindent\textit{Second step:} Using (3.1) and (3.5) this yields
\begin{align}
\notag \sum_{i=1}^{l(\la)+1} 
c_i(\la) \,e_k(A_{\la^{(i)}})&=\sum_{i=1}^{l(\la)+1} 
c_i(\la) \, \big(e_k(A_{\la})+e_{k-1}(A_{\la})u_i\big)=e_k(A_{\la}),\\
\sum_{i=1}^{l(\la)+1} 
c_i(\la) \, u_i\,e_k(A_{\la^{(i)}})&=\sum_{i=1}^{l(\la)+1} 
c_i(\la) \, u_i  \big(e_k(A_\la)+e_{k-1}(A_\la)u_i\big)=n e_{k-1}(A_\la),\\
\notag \sum_{i=1}^{l(\la)+1} 
c_i(\la) \, u_i^2\,e_k(A_{\la^{(i)}})&=\sum_{i=1}^{l(\la)+1} 
c_i(\la) \, u_i^2 \big(e_k(A_\la)+e_{k-1}(A_\la)u_i\big)=(n e_k+2e_1e_{k-1})(A_\la).
\end{align}
With the notations of Section 4.4, we have $G=0$ and
$F=e_k$, $n e_{k-1}$, $n e_k+2e_1e_{k-1}$, respectively.

\noindent\textit{Third step:} Applying (4.8) and (4.9) the two first relations may be written as
\begin{equation*}
\begin{split}
\sum_{|\mu|=n+1} a^{(k)}_\mu(n+1) \ta_{\mu \setminus  (1)}^\la&=\sum_{|\nu|=n} a^{(k)}_\nu(n) \ta_\nu^\la,\\
\sum_{|\mu|=n+1} a_\mu^{(k)}(n+1)\, \sum _{r\ge 1}
r\big(m_r(\mu)+1\big)\theta^\la_{\mu \setminus (r+1)\cup (r)}&=
n\sum_{|\nu|=n} a^{(k-1)}_\nu(n) \ta_\nu^\la.
\end{split}
\end{equation*}

\noindent\textit{Fourth step:} By identification of coefficients on both sides, for any $\mu\vdash n$ we obtain
\begin{equation}
\begin{split}
a^{(k)}_{\mu\cup(1)}(n+1)&=a^{(k)}_\mu(n),\\
\sum _{r\ge 1}
rm_r(\mu)\,a^{(k)}_{\mu \setminus (r)\cup (r+1)}(n+1)
&=n a^{(k-1)}_{\mu}(n).
\end{split}
\end{equation}
\noindent\textit{Final step:} The previous recurrence relations allow us to conclude by a triple induction: firstly on $k$, then on $n$ and finally on the lowest part of $\mu\vdash n$. 

We warn the reader that we shall use such a multiple induction several times in this paper. Details are given here, but will not be repeated below.

\textit{(i)} Assume $a^{(i)}_\mu(n)=\delta_{l(\mu),n-i}$ for any $n$ and $\mu$, and $i\le k-1$.

\textit{(ii)} Assume $a^{(k)}_\mu(m-1)=\delta_{l(\mu),m-k-1}$ for any $\mu \vdash m-1$. By the first relation (5.2) we have obviously $a^{(k)}_\mu(m)=\delta_{l(\mu),m-k}$ for those $\mu \vdash m$ whose lowest part is $1$. 

\textit{(iii)} Now assume this property to be true for those $\mu \vdash m$ whose lowest part is $p-1$. Let $\nu \vdash m$ having lowest part $p$. Then $\mu=\nu \setminus (p)\cup (p-1)\vdash m-1$ has lowest part $p-1$ with multiplicity 1. 

Writing the second relation (5.2) for $\mu$ determines $a^{(k)}_{\nu}(m)$, since all partitions on the left-hand side have lowest part $p-1$, but $\nu=\mu \setminus (p-1)\cup (p)$. Namely
\begin{equation*}
\begin{split}
(p-1)\,a^{(k)}_{\nu}(m)&=(m-1) a^{(k-1)}_{\mu}(m-1)-\sum _{r> p-1}
rm_r(\mu)\,a^{(k)}_{\mu \setminus (r)\cup (r+1)}(m)\\
&=\left(m-1-\sum _{r> p-1}
rm_r(\mu)\right)\delta_{l(\mu),m-k}\\
&=(p-1)\delta_{l(\nu),m-k}.
\end{split}
\end{equation*}
\end{proof}

Up to now we have only used the two first equations (5.1). But the third one has also an interesting consequence.
\begin{prop}
For any positive integer $k$ we have
\[(e_1e_k)(J_1,\ldots,J_n)=\sum_{\begin{subarray}{c}|\mu|=n\\
l(\mu)=n-k-1 \end{subarray}}  a_\mu C_\mu
+\sum_{\begin{subarray}{c}|\mu|=n\\
l(\mu)=n-k+1 \end{subarray}} \left(\binom{n}{2}-a_\mu\right) C_\mu,\]
with $a_\mu=\sum_{r\ge 2}m_r(\mu) \binom{r}{2}$. 
\end{prop}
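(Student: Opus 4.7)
The plan is to derive the expansion from the third identity in (5.1), applied with $k+1$ in place of $k$:
$$\sum_{i=1}^{l(\la)+1} c_i(\la)\,(\la_i-i+1)^2\, e_{k+1}(A_{\la^{(i)}}) \;=\; n\,e_{k+1}(A_\la) \;+\; 2(e_1 e_k)(A_\la).$$
Solving for $2(e_1 e_k)(A_\la)$ and expanding the two terms of the right-hand side in central characters $\ta^\la_\nu$ will deliver the claimed class expansion through the corollary to Jucys' theorem.

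By Theorem 5.1, $e_{k+1}(A_{\la^{(i)}})=\sum_{|\mu|=n+1,\,l(\mu)=n-k}\ta^{\la^{(i)}}_\mu$ and $e_{k+1}(A_\la)=\sum_{|\nu|=n,\,l(\nu)=n-k-1}\ta^\la_\nu$. Applying formula (4.7) to each inner sum $\sum_i c_i(\la)(\la_i-i+1)^2\ta^{\la^{(i)}}_\mu$, the three lines of (4.7) produce $\ta^\la_\nu$ only for $l(\nu)\in\{n-k-1,n-k+1\}$, which matches the support stated in the proposition. Write $P(\nu):=\sum_{r\ge 1} r^2 m_r(\nu)$ for brevity.

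Fix $\nu$ with $l(\nu)=n-k-1$. The first line of (4.7) (with $\mu=\nu\cup(1)$, so $m_1(\mu)=m_1(\nu)+1$) contributes $2n-m_1(\nu)$. The third line, with $\mu=\nu\setminus(p)\cup(r,s)$ for $p=r+s-1$ and $r,s\ge 2$, contributes $(r+s-1)m_p(\nu)$ per valid pair; summing the $p-2$ ordered pairs with $r+s=p+1$, $r,s\ge 2$ for each $p\ge 3$ gives $\sum_{p\ge 3} p(p-2)m_p(\nu) = P(\nu)-2n+m_1(\nu)$. Adding the two contributions yields $P(\nu)$; subtracting the $n$ supplied by $-n\,e_{k+1}(A_\la)$ gives the net coefficient $P(\nu)-n$. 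Now fix $\nu$ with $l(\nu)=n-k+1$. The middle line of (4.7), with $\mu=\nu\setminus(r,s)\cup(r+s+1)$, contributes $rs(m_r(\nu)-\delta_{rs})m_s(\nu)$ per pair uniformly in the cases $r=s$ and $r\ne s$, and summing gives $\bigl(\textstyle\sum_r rm_r(\nu)\bigr)^2 - P(\nu) = n^2-P(\nu)$.

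Dividing both cases by $2$ and using the elementary identity $2a_\nu = \sum_{r\ge 2} r(r-1)m_r(\nu) = P(\nu)-n$, the coefficient of $\ta^\la_\nu$ for $l(\nu)=n-k-1$ becomes $a_\nu$, while that for $l(\nu)=n-k+1$ becomes $(n^2-P(\nu))/2 = \binom{n}{2}-a_\nu$. This is exactly the central-character form of the proposition. The principal obstacle is the bookkeeping across the three cases of (4.7); the delicate points are the diagonal $r=s$ in the middle line (absorbed uniformly by the factor $m_r(\nu)-\delta_{rs}$) and the combinatorial count of $p-2$ ordered pairs $(r,s)$ with $r,s\ge 2$ and $r+s=p+1$ in the third line.
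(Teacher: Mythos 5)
Your proposal is correct and follows essentially the same route as the paper: both start from the third relation of (5.1) with $k+1$ in place of $k$, expand $e_{k+1}$ via Theorem 5.1, apply (4.7), and identify coefficients of $\ta^\la_\nu$ (the paper merely merges your first- and third-line contributions into a single sum over $r,s\ge 1$ before simplifying to $\sum_t t^2 m_t(\nu)$). The diagonal bookkeeping $rs\,m_r(\nu)(m_s(\nu)-\delta_{rs})$ and the final identities $P(\nu)-n=2a_\nu$, $(n^2-P(\nu))/2=\binom{n}{2}-a_\nu$ match the paper's computation exactly.
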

\begin{proof}
The third relation (5.1) may be written as
\[2(e_ke_1)(A_{\la}):=\sum_{|\mu|=n}  2a^{(k,1)}_\mu(n)\ta^\la_\mu=-n e_{k+1}(A_{\la})+\sum_{i=1}^{l(\la)+1} 
c_i(\la) \,(\la_i-i+1)^2\,e_{k+1}(A_{\la^{(i)}}).\]
Using (4.10), the expansion of the right-hand side is
\begin{align*}
-n\sum_{\begin{subarray}{c}|\mu|=n\\
l(\mu)=n-k-1 \end{subarray}}  \ta^\la_\mu
+\sum_{\begin{subarray}{c}|\mu|=n+1\\
l(\mu)=n-k \end{subarray}}
&\Big((2n-m_1(\mu)+1) \, \ta^\la_{\mu \setminus 1}\\
&+\sum_{r,s \ge 1} rs(m_r(\mu)+1)(m_s(\mu)+\delta_{rs}+1)\, 
\ta^{\la}_{\mu \setminus (r+s+1)\cup (r,s)}
\\ &+ \sum_{r,s \ge 2} (r+s-1)(m_{r+s-1}(\mu)+1)\,
\ta^{\la}_{\mu \setminus (r,s)\cup (r+s-1)}\Big).
\end{align*}
By identification of coefficients on both sides, we obtain
\begin{equation*}
\begin{split}
2a^{(k,1)}_\mu(n)=-n\delta_{l(\mu),n-k-1}&+\sum_{r,s \ge 1} rsm_r(\mu)(m_s(\mu)-\delta_{rs})\, 
\delta_{l(\mu)-1,n-k}
\\ &+\sum_{r,s \ge 1} (r+s-1)m_{r+s-1}(\mu)\,\delta_{l(\mu)+1,n-k}.
\end{split}
\end{equation*}
The coefficient of $\delta_{l(\mu),n-k-1}$ is 
\begin{equation}
-n+\sum_{r,s \ge 1} (r+s-1)m_{r+s-1}(\mu)=-n+\sum_{t\ge 1} t^2m_t(\mu)=2a_\mu.
\end{equation}
The coefficient of $\delta_{l(\mu),n-k+1}$ is 
\begin{equation}
\sum_{r,s \ge 1} rsm_r(\mu)(m_s(\mu)-\delta_{rs})=n^2-\sum_{t\ge 1} t^2m_t(\mu)=n^2-n-2a_\mu.
\end{equation}
\end{proof}

\section{Power sums}

We are now in a position to give an alternative proof of the classical result of Lascoux-Thibon~\cite{LT}. We study the expansion
\[p_k(A_\la)=\sum_{|\mu|=n} a^{(k)}_\mu(n) \, \ta_\mu^\la\]
in two steps: firstly a recurrence between coefficients, secondly a generating function. 

\subsection{Recurrence}

Denoting $u_i=\la_i-i+1$, the expansion (4.12) takes the very simple form
\begin{equation*}
p_k(A_{\la^{(i)}})=p_k(A_{\la})+u_i^k.
\end{equation*}
Therefore (3.1) and (3.5) yield
\begin{align*}
\sum_{i=1}^{l(\la)+1} 
c_i(\la) \,p_k(A_{\la^{(i)}})&=p_k(A_{\la})+\sigma_k(\la),\\
\sum_{i=1}^{l(\la)+1} 
c_i(\la) \,u_i\,p_k(A_{\la^{(i)}})&=\sigma_{k+1}(\la),\\
\sum_{i=1}^{l(\la)+1} 
c_i(\la) \,u_i^2\,p_k(A_{\la^{(i)}})&
=np_k(A_{\la})+\sigma_{k+2}(\la).
\end{align*}
By elimination we get immediately
\begin{align*}
\sum_{i=1}^{l(\la)+1} 
c_i(\la) \,p_{k}(A_{\la^{(i)}})&=p_{k}(A_{\la})+\sum_{i=1}^{l(\la)+1} 
c_i(\la) \,u_i\,p_{k-1}(A_{\la^{(i)}}),\\
\sum_{i=1}^{l(\la)+1} 
c_i(\la) \,u_i\,p_{k}(A_{\la^{(i)}})&=-np_{k-1}(A_\la)
+\sum_{i=1}^{l(\la)+1} 
c_i(\la) \,u_i^2\,p_{k-1}(A_{\la^{(i)}}).
\end{align*}
Applying (4.8)--(4.10) these relations write respectively as
\begin{align*}
\sum_{|\mu|=n+1} a^{(k)}_\mu(n+1)\, \ta_{\mu \setminus  (1)}^\la&=\sum_{|\nu|=n} a^{(k)}_\nu(n)\, \ta_\nu^\la\\
&+\sum_{|\mu|=n+1} a_\mu^{(k-1)}(n+1)\, \sum _{r\ge 1}
r\big(m_r(\mu)+1\big)\theta^\la_{\mu \setminus (r+1)\cup (r)},
\end{align*}
\begin{align*}
\sum_{|\mu|=n+1} a_\mu^{(k)}(n+1)\, &\sum _{r\ge 1}
r\big(m_r(\mu)+1\big)\theta^\la_{\mu \setminus (r+1)\cup (r)}=
-n\sum_{|\nu|=n} a^{(k-1)}_{\nu}(n)\, \ta_\nu^\la\\
&+\sum_{|\mu|=n+1} a_\mu^{(k-1)}(n+1)\,
\Big((2n-m_1(\mu)+1) \, \ta^\la_{\mu \setminus 1}\\
&+\sum_{r,s \ge 1} rs(m_r(\mu)+1)(m_s(\mu)+\delta_{rs}+1)\, 
\ta^{\la}_{\mu \setminus (r+s+1)\cup (r,s)}
\\ &+ \sum_{r,s \ge 2} (r+s-1)(m_{r+s-1}(\mu)+1)\,
\ta^{\la}_{\mu \setminus (r,s)\cup (r+s-1)}\Big).
\end{align*}
By identification of coefficients on both sides, for any $\mu\vdash n$ we get
\begin{equation}
a^{(k)}_{\mu\cup(1)}(n+1)=a^{(k)}_\mu(n)+\sum _{r\ge 1}
rm_r(\mu)\,a^{(k-1)}_{\mu \setminus (r)\cup (r+1)}(n+1),
\end{equation}
\begin{equation}
\begin{split}
\sum _{r\ge 1}
rm_r(\mu)\,a^{(k)}_{\mu \setminus (r)\cup (r+1)}(n+1)&=
-na^{(k-1)}_{\mu}(n)\\
&+\sum_{r,s \ge 1} rsm_r(\mu)(m_s(\mu)-\delta_{rs})\, 
a_{\mu \setminus (r,s)\cup (r+s+1)}^{(k-1)}(n+1)
\\ &+ \sum_{r,s \ge 1} (r+s-1)m_{r+s-1}(\mu)\,
a_{\mu \setminus (r+s-1)\cup (r,s)}^{(k-1)}(n+1).
\end{split}
\end{equation}
In the second sum on the right-hand side, the cases $r=1$ or $s=1$ give the total contribution $(2n-m_1(\mu))\,a_{\mu \cup (1)}^{(k-1)}(n+1)$.

These two recurrence relations determine the coefficients $a^{(k)}_\mu(n)$ by a triple induction: on $k$, on $n$ and on the lowest part of $\mu$. Firstly (6.1) gives $a^{(k)}_\mu(n+1)$ for those $\mu$ whose lowest part is $1$. Then (6.2) determines $a^{(k)}_\mu(n+1)$ by induction on the lowest part of $\mu$. The proof is strictly parallel to the final step of Theorem 5.1 and is left to the reader.

These recurrence relations also imply that $a^{(k)}_{\mu}(n)$ may be written as
\begin{equation*}
a^{(k)}_{\mu}(n)=\sum_{\overline{\rho}=\overline{\mu}}c^{(k)}_\rho \, \binom{n-|\overline{\rho}|}{m_1(\rho)},
\end{equation*}
with $c^{(k)}_\rho$ independent of $n$ (a property already known in view of Section 2.8). By substitution into (6.1)--(6.2), we obtain the corresponding recurrence relations for $c^{(k)}_\rho$.

This computation will be used several times in this paper. Since it is rather technical, we postpone it to an Appendix. Using the Lemma given there with $z=1$, we obtain the following result.
\begin{theo}
We have the class expansion
\[p_k(J_1,\ldots,J_n)=\sum_{\rho}c^{(k)}_\rho \, \binom{n-|\overline{\rho}|}{m_1(\rho)} \, C_{\tilde{\rho}},\]
where the coefficients $c^{(k)}_\rho$ are determined by the two recurrence relations
\begin{equation}
c^{(k)}_{\rho\cup (1)}=\sum_{r \ge 1} rm_r(\rho) \,c^{(k-1)}_{\rho\setminus (r) \cup(r+1)},
\end{equation}
\begin{equation}
\begin{split}
\sum_{r \ge 1} rm_r(\rho) \,c^{(k)}_{\rho\setminus (r) \cup(r+1)}=
|\rho|\,c^{(k-1)}_\rho
&+\sum_{r,s \ge 1} rsm_r(\rho)(m_s(\rho)-\delta_{rs})\, 
c_{\rho \setminus (r,s)\cup (r+s+1)}^{(k-1)}\\
&+\sum_{r,s \ge 1} (r+s-1)m_{r+s-1}(\rho)\,
c_{\rho \setminus (r+s-1)\cup (r,s)}^{(k-1)}.
\end{split}
\end{equation}
\end{theo}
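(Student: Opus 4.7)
The strategy is to insert the expansion
\begin{equation*}
a^{(k)}_\mu(n)=\sum_{\overline{\rho}=\overline{\mu}}c^{(k)}_\rho \binom{n-|\overline{\rho}|}{m_1(\rho)}
\end{equation*}
into the recurrences (6.1)--(6.2) for the $a^{(k)}_\mu(n)$. For each fixed $\overline{\mu}$, both sides become polynomial identities in $n$, and extracting coefficients in the basis $\{\binom{n-|\overline{\mu}|}{j}\}_{j \ge 0}$ (which is free as a family of polynomials in $n$) produces scalar identities for the $c^{(k)}_\rho$'s that, after relabelling $\rho=\overline{\mu}\cup 1^j$, are exactly (6.3) and (6.4).

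The translation rests on two elementary manipulations. Pascal's rule $\binom{n+1-|\overline{\mu}|}{j}=\binom{n-|\overline{\mu}|}{j}+\binom{n-|\overline{\mu}|}{j-1}$ handles the $n\mapsto n+1$ shifts on the left-hand sides of (6.1) and (6.2); a reindexing $\sigma\mapsto\tau\cup(1)$ then puts the result back in the standard binomial basis. When a part equal to $1$ is removed from $\mu$ (e.g.\ the $r=1$ case of $\mu\setminus(r)\cup(r+1)$), the operation both shifts $|\overline{\mu}|$ and couples explicitly to $n$ via $m_1(\mu)=n-|\overline{\mu}|$; this is reconciled by the identity $(n-|\overline{\mu}|)\binom{n-|\overline{\mu}|-1}{j}=(j+1)\binom{n-|\overline{\mu}|}{j+1}$, which again restores the standard basis. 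Finally, the globally $n$-dependent term $-na^{(k-1)}_\mu(n)$ in (6.2) is rewritten via $x\binom{x}{j}=(j+1)\binom{x}{j+1}+j\binom{x}{j}$ applied to $x=n-|\overline{\mu}|$, producing mixed shifts that will combine with contributions from other terms to yield the clean $|\rho|c^{(k-1)}_\rho$ summand of (6.4).

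For (6.1) the computation is short and produces (6.3) directly: after the cancellation of $a^{(k)}_\mu(n)$ on both sides, only the cases $r=1$ and $r\ge 2$ of the remaining sum need to be combined, giving exactly $\sum_{r\ge 1}rm_r(\rho)c^{(k-1)}_{\rho\setminus(r)\cup(r+1)}$. For (6.2) the same principle applies, but each of the three sums on its right-hand side splits into several subcases according to whether the parts being added or removed equal $1$, since only $m_1$ is coupled to $n$; one must also carefully handle the contribution $(2n-m_1(\mu))a^{(k-1)}_{\mu\cup(1)}(n+1)$ highlighted after (6.2). After aggregating these subcases via Pascal's rule and the two identities above, the coefficient of each $\binom{n-|\overline{\mu}|}{j}$ reassembles into (6.4).

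The main obstacle is precisely this bookkeeping for (6.2): the sign discrepancy between the $-n a^{(k-1)}_\mu(n)$ term of (6.2) and the $+|\rho|c^{(k-1)}_\rho$ term of (6.4) shows that no term-by-term match is possible, and one must track all subcases simultaneously to witness the required cancellations. This is why the paper defers the computation to an appendix built around a more general lemma involving a Hall--Littlewood parameter $z$, with Theorem~6.1 recovered at $z=1$.
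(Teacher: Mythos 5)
Your proposal is correct and follows essentially the same route as the paper: substituting $a^{(k)}_\mu(n)=\sum_{\overline{\rho}=\overline{\mu}}c^{(k)}_\rho\binom{n-|\overline{\rho}|}{m_1(\rho)}$ into (6.1)--(6.2) and identifying coefficients in the binomial basis, which is exactly what the paper's Appendix Lemma does (at $z=1$), with your three binomial identities combining into the single packaged identity the paper uses there. You also correctly flag the two delicate points the paper handles, namely the coupling $m_1(\mu)=n-|\overline{\mu}|$ and the merging of the $-na^{(k-1)}_\mu(n)$ term with the $r=1$ or $s=1$ contributions to produce $|\rho|\,c^{(k-1)}_\rho$.
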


\noindent\textit{Remarks: (i)} In the second sum on the right-hand side of (6.4), the cases $r=1$ or $s=1$ give the total contribution $(2|\rho|-m_1(\rho))\,c_{\rho \cup (1)}^{(k-1)}$.

\noindent\textit{(ii)} As above, $c_{\rho}^{(k)}$ is inductively defined by (6.3) for $\rho$ having lowest part 1, and by (6.4) for $\rho$ having lowest part $>1$.

\noindent\textit{(iii)} By induction on $k$ and the lowest part of $\rho$, the coefficients $c_{\rho}^{(k)}$ are non zero for $|\rho|+l(\rho)= k+2-2i$ for some $i\ge 0$.

\subsection{Generating function}

We look for a generating function of the coefficients $c_{\rho}^{(k)}$, written under the form
\[\phi_\rho(t)=\sum_{k\ge 0} c_{\rho}^{(k)} \, \frac{t^k}{k!}.\]
It will be useful to collect the $\phi_\rho$'s for partitions $\rho$ having the same weight $w$. For that purpose, we introduce a set of $N$ auxiliary variables $X=(x_1,\ldots,x_N)$, and we define
\[\Phi_w(t;X)= \sum_{|\rho|=w}\phi_\rho(t)\, z_{\rho}^{-1}p_\rho(X).\]

We shall translate the recurrence relations (6.3)--(6.4) in terms of $\Phi_w$. This will allow us to evaluate $\Phi_w$, hence $\phi_\rho$.

\begin{prop} 
The recurrence relations (6.3)--(6.4) are equivalent with
\begin{equation*}
\frac{d}{dt}\,\frac{\partial}{\partial p_1}\Phi_w=\mathbf{E}\, \Phi_w,
\qquad \frac{d}{dt}\,\mathbf{E}\,\Phi_{w+1}= \mathbf{D}\,\Phi_{w+1}+w\Phi_w.
\end{equation*}
\end{prop}
\begin{proof}
Denoting $\widehat{p}_{\mu}=z_{\mu}^{-1}p_{\mu}$, (4.4) implies
\begin{align*}
\frac{d}{dt}\,\frac{\partial}{\partial p_1}\Phi_w&=
\sum_{|\rho|=w} \sum_{k\ge 0} c_{\rho}^{(k)} \, \frac{t^{k-1}}{(k-1)!}\,\widehat{p}_{\rho\setminus (1)},\\
\mathbf{E}\,\Phi_w&=\sum_{|\rho|=w}\sum_{k\ge 0} c_{\rho}^{(k)} \, \frac{t^{k}}{k!} \,\sum_{r\ge 1}r(m_{r}(\rho)+1)\, \widehat{p}_{\rho \setminus (r+1)\cup (r)}\\
\mathbf{D} \,\Phi_{w+1}&=\sum_{|\rho|=w+1}\sum_{k\ge 0} c_{\rho}^{(k)} \, \frac{t^{k}}{k!} \Big(\sum_{r,s\ge 1} (r+s-1)(m_{r+s-1}(\rho)+1)\, \widehat{p}_{\rho \setminus (r,s)\cup (r+s-1)}\\
&+\sum_{r,s \ge 1} rs(m_r(\rho)+1)(m_s(\rho)+1+\delta_{rs})\,
\widehat{p}_{\rho \setminus (r+s+1)\cup (r,s)}\Big).
\end{align*}
We conclude by identification of coefficients of $t^{k-1}\widehat{p}_\rho$.
\end{proof}

Instead of power sums, we may alternatively decompose $\Phi_w(t;X)$ in terms of Schur functions, and write
\[\Phi_w(t;X)= \sum_{|\rho|=w}\psi_\rho(t)\, s_\rho(X).\]
Using (4.7), by identification of coefficients of Schur functions, Proposition 6.2 writes equivalently as
\begin{equation}
\begin{split}
\sum_{i=1}^{l(\rho)+1}\frac{d}{dt} \,\psi_{\rho^{(i)}}(t)&=
\sum_{i=1}^{l(\rho)+1}(\rho_i-i+1) \,\psi_{\rho^{(i)}}(t),\\
\sum_{i=1}^{l(\rho)+1}(\rho_i-i+1)\frac{d}{dt} \,\psi_{\rho^{(i)}}(t)&=
\sum_{i=1}^{l(\rho)+1}(\rho_i-i+1)^2 \,\psi_{\rho^{(i)}}(t)+|\rho|\psi_\rho(t).
\end{split}
\end{equation}

Because $p_0(J_1,\ldots,J_n)=n$, we have $c^{(0)}_\rho=\delta_{\rho,(1)}$ for any partition $\rho$. Therefore this first order differential system must be solved with the initial conditions $\psi_{\rho}(0)=\delta_{\rho,(1)}$ for any $\rho$.

\begin{prop}
The solutions of the system (6.5) are given by 
\begin{align*}
|\rho| !\,\psi_{\rho}(t)=\begin{cases}
(e^t-1)^{r-1}\, (e^{-t}-1)^s \qquad &\textrm{if} \: \rho \: \mbox{is a hook} \: (r,1^s)\: \textrm{with} \: r\ge 1,\\ 0 &\textrm{otherwise}.\end{cases}
\end{align*}
\end{prop}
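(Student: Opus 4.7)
My approach is to verify that the claimed $\psi_\rho$ satisfy the differential system (6.6) with initial conditions $\psi_\rho(0) = \delta_{\rho,(1)}$, and then invoke uniqueness. The uniqueness is already available: the recurrence (6.3)--(6.4) together with $c^{(0)}_\rho = \delta_{\rho,(1)}$ determines each $c^{(k)}_\rho$, hence each $\psi_\rho$, by the triple induction on $k$, $|\rho|$, and the lowest part of $\rho$ used in the final step of Theorem 5.1 and in the remarks following Theorem 6.1. The initial condition for the proposed formula is immediate: $|\rho|!\,\psi_\rho(0) = 0^{r-1}\cdot 0^s$ vanishes unless $r-1 = s = 0$, in which case $\psi_{(1)}(0)=1$.

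The key structural observation is that if $\rho^{(i)}$ is a hook, then $\rho$, obtained from a hook by removing a corner, is itself a hook or empty. Consequently, when $\rho$ is non-empty and not a hook, every $\psi_{\rho^{(i)}}$ and $\psi_\rho$ vanishes, and (6.6) reduces to $0=0$. When $\rho = \emptyset$, the only neighbor is $\rho^{(1)} = (1)$ with content $0$, and (6.6) collapses to $\psi_{(1)}' = 0$, compatible with $\psi_{(1)}\equiv 1$. For $\rho = (r,1^s)$ a non-empty hook, the hook-neighbors are $\rho^{(1)} = (r+1,1^s)$ (content $r$) and $\rho^{(s+2)} = (r,1^{s+1})$ (content $-(s+1)$); the only possible non-hook neighbor, $\rho^{(2)} = (r,2,1^{s-1})$, arising when $r\ge 2$ and $s\ge 1$, has its added cell of content $0$, so it contributes nothing to either equation.

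It remains to verify (6.6) on the two hook contributions. Setting $F_{a,b}(t) = (e^t-1)^a(e^{-t}-1)^b$ so that $(r+s+1)!\,\psi_{\rho^{(1)}} = F_{r,s}$ and $(r+s+1)!\,\psi_{\rho^{(s+2)}} = F_{r-1,s+1}$, the derivative rule
\[
F_{a,b}' = (a-b)\,F_{a,b} + a\,F_{a-1,b} - b\,F_{a,b-1},
\]
with the conventions $F_{-1,\cdot} = F_{\cdot,-1} = 0$, follows from $(e^t-1)' = 1 + (e^t-1)$ and its mirror. Substituting into both equations of (6.6) and collecting terms, the difference of the two sides in each case reduces, via elementary manipulations, to a multiple of
\[
F_{a,b} + F_{a-1,b} + F_{a,b-1} \qquad (a,b\ge 1),
\]
which vanishes by the defining identity $(e^t-1)(e^{-t}-1) + (e^t-1) + (e^{-t}-1) = e^t\cdot e^{-t} - 1 = 0$. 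The main obstacle I anticipate is the bookkeeping at the boundary sub-cases ($r=1$, where $\rho^{(2)}$ does not exist, or $s=0$, where $\rho^{(s+2)}$ coincides with $\rho^{(2)}$ and is itself a hook): one must read the derivative rule with the vanishing conventions on $F$ in force and check that the same single identity closes each configuration.
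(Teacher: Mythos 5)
Your proposal is correct, but it proves the proposition in the opposite direction from the paper. Lassalle \emph{derives} the solution: for each parent partition he takes suitable linear combinations of the two equations of (6.6) so as to decouple the overdetermined system into first-order linear ODEs for the individual $\psi_{\rho^{(i)}}$ (for instance $(r+1)\psi^{\prime}_{(r+1)}=r(r+1)\psi_{(r+1)}+r\psi_{(r)}$ and its companions for $(r,1)$, $(r,2,1^{s-1})$, $(r,1^{s+1})$, and the homogeneous equations $\psi^{\prime}_{\rho}=(\rho_k-k)\psi_\rho$ in the non-hook case), and then solves these inductively with the initial data $\psi_\rho(0)=\delta_{\rho,(1)}$; existence, uniqueness and the vanishing on non-hooks all come out of the same induction, and the argument also shows how the closed formula can be discovered. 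You instead \emph{verify} the closed formula in (6.6) via the derivative rule for $F_{a,b}=(e^t-1)^a(e^{-t}-1)^b$ and the identity $F_{a,b}+F_{a-1,b}+F_{a,b-1}=0$, and you import uniqueness from the recurrences (6.3)--(6.4); since the paper has already established that those recurrences determine the $c^{(k)}_\rho$, nothing is circular, and the verification does close: the difference of the two sides of the first equation is $-s\,(F_{r,s}+F_{r-1,s}+F_{r,s-1})+(r-1)(F_{r-1,s}+F_{r-1,s+1}+F_{r-2,s+1})$ and of the second is $-rs\,(F_{r,s}+F_{r-1,s}+F_{r,s-1})-(r-1)(s+1)(F_{r-1,s}+F_{r-1,s+1}+F_{r-2,s+1})$, so everything cancels, boundary conventions included (the prefactors $s$ and $r-1$ kill precisely the brackets that would otherwise need $a,b\ge 1$). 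This is a legitimate and, once the formula is guessed, arguably cleaner write-up. One small repair is needed in your justification for discarding the non-hook neighbour $\rho^{(2)}=(r,2,1^{s-1})$: the fact that its added cell has content $0$ only disposes of the terms carrying a content factor, whereas in the first equation of (6.6) the term $\psi^{\prime}_{\rho^{(2)}}$ enters the left-hand side with coefficient $1$; it vanishes because the formula being verified assigns $0$ to this non-hook, not because of the content. You already have that fact from your structural observation, so the fix is one sentence.
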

\begin{proof}
Let $P(w)$ denote the number of partitions with weight $w$. We have $P(w+1)<2P(w)$ as soon as $w\ge2$. The differential system (6.5) written for $|\rho|=w$ is formed of $2P(w)$ equations and involves $P(w+1)$ indeterminates. Therefore it is overdetermined and has at most one solution. It is enough to check that the given values are solutions. This easy computation is left to the reader.
\end{proof}

We are now in a position to give another proof of the Lascoux-Thibon's result~\cite{LT}. We shall make a limited, but crucial, use of $\la$-ring calculus. Here we shall not enter into details, and refer the reader to~\cite[Chapter 2]{Las} or~\cite[Section 3]{La3} for a short survey of this theory.

If $f$ is a symmetric function, we denote by $f[P]$ its $\lambda$-ring action on any polynomial $P$. Let $q$ be some indeterminate, $X=(x_1,\ldots,x_N)$ an alphabet and $X^\dag=\sum_{i=1}^N x_i$. We have the fundamental Cauchy formulas~\cite[p.13]{Las}
\begin{align*}
h_w [(q-1)X^\dag]&= \sum_{|\rho|=w} 
p_{\rho}[q-1] \, z_{\rho}^{-1}p_{\rho}(X)\\
&=\sum_{|\rho|= w} s_{\rho} [q-1] \, s_{\rho} (X),
\end{align*} 
with~\cite[p.11]{Las}
\begin{align*}
p_{\rho}[q-1]&=\prod_{i\ge 1}(q^{i}-1)^{m_i(\rho)}, \\
s_{\rho}[q-1]&=\begin{cases}
(-1)^s q^{r-1}\, (q-1) \qquad \textrm{if} \: \rho \: \mbox{is a hook} \: (r,1^s)\: \textrm{with} \: r\ge 1,\\ 0 \hspace{3.7cm} \textrm{otherwise}.\end{cases}
\end{align*}
\begin{theo}
In the class expansion
\[p_k(J_1,\ldots,J_n)=\sum_{\rho}c^{(k)}_\rho \, \binom{n-|\overline{\rho}|}{m_1(\rho)} \, C_{\tilde{\rho}},\]
the coefficients $c^{(k)}_\rho$ have the generating function
\begin{equation*}
\sum_{k\ge 0} c_{\rho}^{(k)} \, \frac{t^k}{k!}=
\frac{e^{-t}}{|\rho|!}(1-e^{-t})^{|\rho|-2}\,\prod_{i\ge 1}(e^{it}-1)^{m_i(\rho)}.
\end{equation*}
\end{theo}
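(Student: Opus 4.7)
The plan is to pass from the Schur expansion of $\Phi_m(t;X)$ provided by Proposition 6.3 to the power-sum expansion via the two Cauchy formulas recalled just before the theorem, after the change of variable $q=e^t$. Since $\phi_\rho$ is a coefficient of $z_\rho^{-1}p_\rho(X)$ in $\Phi_m$, once the power-sum expansion of $\Phi_m$ is identified the result is immediate.

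First I observe that the explicit formula of Proposition 6.3 can be rewritten in $\lambda$-ring language. For a hook $\rho=(r,1^s)$ with $r+s=m$, using $e^{-t}-1=-e^{-t}(e^t-1)$,
\[|\rho|!\,\psi_\rho(t)=(e^t-1)^{r-1}(e^{-t}-1)^{s}=(-1)^{s}e^{-st}(e^t-1)^{r+s-1}.\]
On the other hand, with $q=e^t$, the formula recalled from~\cite{Las} gives, for the same hook,
\[s_\rho[e^t-1]=(-1)^{s}e^{(r-1)t}(e^t-1).\]
Dividing, the ratio depends only on $m$:
\[\frac{|\rho|!\,\psi_\rho(t)}{s_\rho[e^t-1]}=e^{-(m-1)t}(e^t-1)^{m-2}.\]
Since $\psi_\rho=0$ when $\rho$ is not a hook and $s_\rho[e^t-1]$ vanishes on the same set of partitions, this identity extends to all $\rho$ with $|\rho|=m$.

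Next I multiply by $s_\rho(X)$ and sum over $|\rho|=m$, using the Schur-basis Cauchy formula $h_m[(q-1)X^\dag]=\sum_{|\rho|=m}s_\rho[q-1]\,s_\rho(X)$ with $q=e^t$, to get
\[m!\,\Phi_m(t;X)=e^{-(m-1)t}(e^t-1)^{m-2}\,h_m\!\left[(e^t-1)X^\dag\right].\]
Then I apply the power-sum Cauchy formula to the right-hand side:
\[h_m\!\left[(e^t-1)X^\dag\right]=\sum_{|\rho|=m}p_\rho[e^t-1]\,z_\rho^{-1}p_\rho(X)=\sum_{|\rho|=m}\Bigl(\prod_{i\ge 1}(e^{it}-1)^{m_i(\rho)}\Bigr)z_\rho^{-1}p_\rho(X).\]

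Finally I identify the coefficient of $z_\rho^{-1}p_\rho(X)$ on both sides of the resulting equality with $m!\,\Phi_m(t;X)=\sum_{|\rho|=m}m!\,\phi_\rho(t)\,z_\rho^{-1}p_\rho(X)$. This yields
\[|\rho|!\,\phi_\rho(t)=e^{-(|\rho|-1)t}(e^t-1)^{|\rho|-2}\,\prod_{i\ge 1}(e^{it}-1)^{m_i(\rho)},\]
and the elementary rewriting $e^{-(m-1)t}(e^t-1)^{m-2}=e^{-t}\bigl(e^{-t}(e^t-1)\bigr)^{m-2}=e^{-t}(1-e^{-t})^{m-2}$ puts this into exactly the form claimed. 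No step looks delicate: the only thing to be careful with is the match between the vanishing locus of $\psi_\rho$ and that of $s_\rho[e^t-1]$ (both are hooks), which is what makes the ratio identity above hold uniformly over $|\rho|=m$ and licenses summing against $s_\rho(X)$.
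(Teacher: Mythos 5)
Your proof is correct and follows essentially the same route as the paper: you verify that $|\rho|!\,\psi_\rho(t)=e^{-(|\rho|-1)t}(e^t-1)^{|\rho|-2}\,s_\rho[q-1]\arrowvert_{q=e^t}$ from Proposition~6.3 (checking hooks explicitly and noting both sides vanish off hooks), then pass through the two Cauchy formulas to convert the Schur expansion of $\Phi_m$ into the power-sum expansion, which is precisely the paper's argument with the hook computation made explicit.
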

\begin{proof}The assertion may be written as
\[\phi_\rho(t)=\frac{e^{-t}}{|\rho|!}(1-e^{-t})^{|\rho|-2}\,
p_\rho[q-1]\arrowvert_{q=e^t}.\]
In view of the first Cauchy formula this is equivalent with
\[\Phi_w(t;X)=\frac{e^{-t}}{w!}(1-e^{-t})^{w-2}\,h_w[(q-1)X^\dag]\arrowvert_{q=e^t}.\]
On the other hand by Proposition 6.3 we have 
\[\psi_\rho(t)=\frac{e^{-t}}{|\rho|!}(1-e^{-t})^{|\rho|-2}\,s_\rho[q-1]\arrowvert_{q=e^t}.\]
The second Cauchy formula allows us to conclude.
\end{proof}

\subsection{Moments}

As a quick by-product of our proof we obtain the explicit form of
\[\sigma_k(\la)=\sum_{|\mu|=n} s^{(k)}_\mu(n) \, \theta^\la_{\mu}= \sum_{\rho}\mathbf{s}^{(k)}_\rho \, \binom{n-|\overline{\rho}|}{m_1(\rho)} \, \theta^\la_{\tilde{\rho}},\]
which gives also the class expansion of the central element $M^{(k)}_n$. A very different proof was given independently by F\'eray~\cite{Fer}.

\begin{theo}
The class expansions
\[p_k(J_1,\ldots,J_n)=\sum_{\rho}c^{(k)}_\rho \, \binom{n-|\overline{\rho}|}{m_1(\rho)} \, C_{\tilde{\rho}},\qquad
M^{(k)}_n=\sum_{\rho}\mathbf{s}^{(k)}_\rho \, \binom{n-|\overline{\rho}|}{m_1(\rho)} \, C_{\tilde{\rho}}
\]
are connected by
\[\mathbf{s}^{(k)}_\rho=c^{(k)}_{\rho\cup(1)}.\]
\end{theo}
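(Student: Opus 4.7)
The plan is to combine Biane's identity $M^{(k)}_n=\pi(J_{n+1}^k)$ with the elementary identity $p_k(J_1,\ldots,J_{n+1})=p_k(J_1,\ldots,J_n)+J_{n+1}^k$. Since $p_k(J_1,\ldots,J_n)\in\mathbb{C}[S_n]$ is fixed by $\pi$, this yields
\[
M^{(k)}_n=\pi\bigl(p_k(J_1,\ldots,J_{n+1})\bigr)-p_k(J_1,\ldots,J_n).
\]
A direct computation shows that for any $\nu\vdash n+1$, $\pi(C_\nu)$ equals the class sum $C_{\nu\setminus(1)}\in\mathbb{Z}_n$ when $m_1(\nu)\ge 1$, and vanishes otherwise: indeed, $\pi$ retains exactly those permutations of $S_{n+1}$ which fix the symbol $n+1$, and these are in natural bijection with the elements of $C_{\nu\setminus(1)}\subset S_n$.

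Next I would apply $\pi$ termwise to the class expansion of $p_k(J_1,\ldots,J_{n+1})$ provided by Theorem 6.4. Fixing $\mu\vdash n$ and noting that $\tilde{\rho}\setminus(1)=\mu$ iff $\overline{\rho}=\overline{\mu}$ and $|\rho|\le n+1$, the coefficient of $C_\mu$ in $M^{(k)}_n$ reads
\[
s^{(k)}_\mu(n)=\sum_{\rho:\,\overline{\rho}=\overline{\mu},\,|\rho|\le n+1}c^{(k)}_\rho\binom{n+1-|\overline{\rho}|}{m_1(\rho)}\;-\;\sum_{\rho:\,\overline{\rho}=\overline{\mu},\,|\rho|\le n}c^{(k)}_\rho\binom{n-|\overline{\rho}|}{m_1(\rho)}.
\]
Setting $a=|\overline{\mu}|$ and parameterizing $\rho=\overline{\mu}\cup(1)^j$ by $j=m_1(\rho)$, Pascal's rule $\binom{n+1-a}{j}=\binom{n-a}{j}+\binom{n-a}{j-1}$ collapses the right-hand side to $\sum_{j=0}^{n-a}c^{(k)}_{\overline{\mu}\cup(1)^{j+1}}\binom{n-a}{j}$.

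Comparing with the definition $s^{(k)}_\mu(n)=\sum_{j=0}^{n-a}\mathbf{s}^{(k)}_{\overline{\mu}\cup(1)^j}\binom{n-a}{j}$, and using that the coefficients $\mathbf{s}^{(k)}_\rho,\,c^{(k)}_\rho$ are independent of $n$ (Section 2.8) while the binomials $\binom{n-a}{j}$ form a basis of polynomials in $n-a$, the identity of polynomials in $n$ forces $\mathbf{s}^{(k)}_{\overline{\mu}\cup(1)^j}=c^{(k)}_{\overline{\mu}\cup(1)^{j+1}}$ for every $j$. Since any partition can be uniquely written as $\overline{\rho}\cup(1)^{m_1(\rho)}$, this is precisely the claim $\mathbf{s}^{(k)}_\rho=c^{(k)}_{\rho\cup(1)}$.

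The only real obstacle is the bookkeeping with the $\overline{\cdot}$ and $\tilde{\cdot}$ operations and the weight constraints on $\rho$; once the right parameterization is chosen, the combinatorial core reduces to a single application of Pascal's rule. An equivalent analytic route, closer to the paper's spirit, starts from the identity $\sigma_k(\lambda)=\sum_i c_i(\lambda)\bigl[p_k(A_{\lambda^{(i)}})-p_k(A_\lambda)\bigr]$ (immediate from $p_k(A_{\lambda^{(i)}})=p_k(A_\lambda)+(\lambda_i-i+1)^k$ and $\sum_i c_i(\lambda)=1$) and uses equation (4.5), $\sum_i c_i(\lambda)\theta^{\lambda^{(i)}}_{\tilde{\rho}}=\theta^\lambda_{\tilde{\rho}\setminus(1)}$, to reach the same coefficient comparison.
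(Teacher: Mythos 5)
Your argument is correct, but it reaches the pivotal identity $s^{(k)}_\mu(n)=a^{(k)}_{\mu\cup(1)}(n+1)-a^{(k)}_\mu(n)$ by a genuinely different route from the paper. The paper stays entirely on the analytic side: it starts from $\sum_i c_i(\la)\,p_k(A_{\la^{(i)}})=p_k(A_\la)+\sigma_k(\la)$ and applies relation (4.5) to turn both sides into central-character expansions, whence the identity follows by comparing coefficients of $\ta^\la_\nu$. You instead work directly in the group algebra, combining Biane's formula $M^{(k)}_n=\pi(J_{n+1}^k)$ with the additivity $p_k(J_1,\ldots,J_{n+1})=p_k(J_1,\ldots,J_n)+J_{n+1}^k$ and the elementary computation $\pi(C_\nu)=C_{\nu\setminus(1)}$ (or $0$ when $m_1(\nu)=0$); this is precisely the combinatorial counterpart of (4.5) that the paper only alludes to in the remark (attributed to F\'eray) following Theorem 4.1. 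From that point on the two proofs coincide: extracting the coefficient of $C_\mu$ gives $s^{(k)}_\mu(n)=a^{(k)}_{\mu\cup(1)}(n+1)-a^{(k)}_\mu(n)$ in both cases, and your Pascal-rule collapse to $\sum_{\overline{\rho}=\overline{\mu}}c^{(k)}_\rho\binom{n-|\overline{\mu}|}{m_1(\rho)-1}$ is exactly the first displayed computation in the proof of the Appendix Lemma, which the paper then invokes. (Your observation that the $\pi$-killed classes $C_{\tilde{\rho}}$ with $m_1(\tilde{\rho})=0$ never interfere is correct, since $\overline{\rho}=\overline{\mu}$ with $\mu\vdash n$ forces $m_1(\tilde{\rho})\ge 1$.) Your route has the advantage of being self-contained at the level of $\mathbb{Z}_n$ and of bypassing Theorem 4.1 entirely, at the price of taking Biane's projection formula as an external input; the paper's route recycles the analytic machinery it has already built. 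The alternative argument you sketch in your final sentence is the paper's proof essentially verbatim.
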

\begin{proof} At the beginning of Section 6.1, we have seen that
\[\sum_{i=1}^{l(\la)+1} 
c_i(\la) \,p_k(A_{\la^{(i)}})=p_k(A_{\la})+\sigma_k(\la).\]
By (4.8) it yields
\[\sum_{|\mu|=n+1} a^{(k)}_\mu(n+1) \ta_{\mu \setminus  (1)}^\la=\sum_{|\nu|=n} (a^{(k)}_\nu(n)+s^{(k)}_\nu(n))\, \ta_\nu^\la.\]
Equivalently
\begin{align*}
s^{(k)}_\mu(n)&=a^{(k)}_{\mu\cup(1)}(n+1)-a^{(k)}_\mu(n)\\
&=\sum_{\overline{\rho}=\overline{\mu}}c^{(k)}_\rho \left( \binom{n+1-|\overline{\rho}|}{m_1(\rho)}-\binom{n-|\overline{\rho}|}{m_1(\rho)}\right)\\
&=\sum_{\overline{\rho}=\overline{\mu}}c^{(k)}_\rho \, \binom{n-|\overline{\rho}|}{m_1(\rho)-1}.
\end{align*}
\end{proof}

As indicated in Section 3, this result gives the class expansion of $f_k(J_1,\ldots,J_n)$ with $f_k$ a symmetric function depending on $n$, given by (3.4). 

\section{Complete functions}

We now consider the expansion
\[h_k(A_\la)=\sum_{|\mu|=n} a^{(k)}_\mu(n) \, \ta_\mu^\la
=\sum_{\rho}c^{(k)}_\rho \, \binom{n-|\overline{\rho}|}{m_1(\rho)} \,\ta^\la_{\tilde{\rho}}.\]
The proof is very similar, though the situation is much more complicated.

\subsection{Recurrence}

Denoting $u_i=\la_i-i+1$, we have by definition
\[H_z(A_{\la^{(i)}})=H_z(A_{\la})(1-zu_i)^{-1}.\]
Hence the expansion (4.12) may be written as
\begin{equation*}
h_k(A_{\la^{(i)}})=\sum_{j=0}^k h_{k-j}(A_{\la})u_i^j=h_k(A_{\la})+\sum_{j=1}^k h_{k-j}(A_{\la})u_i^j.
\end{equation*}
Using (3.1) and (3.5) we obtain
\begin{align*}
\sum_{i=1}^{l(\la)+1} 
c_i(\la) \,h_k(A_{\la^{(i)}})&=\sum_{i=1}^{l(\la)+1} 
c_i(\la) \, \Big(h_k(A_{\la})+\sum_{j=1}^k h_{k-j}(A_{\la})u_i^j\Big)
\\&=h_k(A_{\la})+\sum_{j=2}^k h_{k-j}(A_{\la})\,\sigma_j(\la).
\end{align*}
Similarly we have
\begin{align*}
\sum_{i=1}^{l(\la)+1} 
c_i(\la) \,u_i\,h_k(A_{\la^{(i)}})&=\sum_{i=1}^{l(\la)+1} 
c_i(\la) \, u_i \Big(h_k(A_{\la})+\sum_{j=1}^k h_{k-j}(A_{\la})u_i^j\Big)
\\&=\sum_{j=1}^k h_{k-j}(A_{\la})\,\sigma_{j+1}(\la)\\
&=\sum_{j=2}^{k+1} h_{k-j+1}(A_{\la})\,\sigma_{j}(\la).
\end{align*}
And also
\begin{align*}
\sum_{i=1}^{l(\la)+1} 
c_i(\la) \,u_i^2\,h_k(A_{\la^{(i)}})&=\sum_{i=1}^{l(\la)+1} 
c_i(\la) \, u_i^2 \Big(\sum_{j=0}^k h_{k-j}(A_{\la})u_i^j\Big)\\&=\sum_{j=0}^k h_{k-j}(A_{\la})\,\sigma_{j+2}(\la)\\
&=\sum_{j=2}^{k+2} h_{k-j+2}(A_{\la})\,\sigma_{j}(\la).
\end{align*}
By elimination, we get immediately
\begin{align*}
\sum_{i=1}^{l(\la)+1} 
c_i(\la) \,h_{k}(A_{\la^{(i)}})&=h_{k}(A_{\la})+\sum_{i=1}^{l(\la)+1} 
c_i(\la) \,u_i\,h_{k-1}(A_{\la^{(i)}}),\\
\sum_{i=1}^{l(\la)+1} 
c_i(\la) \,u_i\,h_{k}(A_{\la^{(i)}})&=\sum_{i=1}^{l(\la)+1} 
c_i(\la) \,u_i^2\,h_{k-1}(A_{\la^{(i)}}).
\end{align*}
Applying (4.8)--(4.10) we obtain
\begin{align*}
\sum_{|\mu|=n+1} a^{(k)}_\mu(n+1) \ta_{\mu \setminus  (1)}^\la&=
\sum_{|\nu|=n} a^{(k)}_\nu(n)\, \ta_\nu^\la\\ &+
\sum_{|\mu|=n+1} a_\mu^{(k-1)}(n+1)\, \sum _{r\ge 1}
r\big(m_r(\mu)+1\big)\theta^\la_{\mu \setminus (r+1)\cup (r)},\\
\sum_{|\mu|=n+1} a_\mu^{(k)}(n+1)\, \sum _{r\ge 1}
r\big(&m_r(\mu)+1\big)\theta^\la_{\mu \setminus (r+1)\cup (r)}
=\\
&\sum_{|\mu|=n+1} a_\mu^{(k-1)}(n+1)\,
\Big((2n-m_1(\mu)+1) \, \ta^\la_{\mu \setminus 1}\\
&+\sum_{r,s \ge 1} rs(m_r(\mu)+1)(m_s(\mu)+\delta_{rs}+1)\, 
\ta^{\la}_{\mu \setminus (r+s+1)\cup (r,s)}
\\ &+ \sum_{r,s \ge 2} (r+s-1)(m_{r+s-1}(\mu)+1)\,
\ta^{\la}_{\mu \setminus (r,s)\cup (r+s-1)}\Big).
\end{align*}
By identification of coefficients on both sides, for any $\mu \vdash n$ we get
\begin{equation}
a^{(k)}_{\mu\cup(1)}(n+1)=a^{(k)}_\mu(n)
+\sum _{r\ge 1}rm_r(\mu)\,a^{(k-1)}_{\mu \setminus (r)\cup (r+1)}(n+1),
\end{equation}
\begin{equation}
\begin{split}
\sum_{r\ge 1}rm_r(\mu)\,a^{(k)}_{\mu \setminus (r)\cup (r+1)}(n&+1)
=\\
&\sum_{r,s \ge 1} rsm_r(\mu)(m_s(\mu)-\delta_{rs})\, 
a_{\mu \setminus (r,s)\cup (r+s+1)}^{(k-1)}(n+1)
\\ +& \sum_{r,s \ge 1} (r+s-1)m_{r+s-1}(\mu)\,
a_{\mu \setminus (r+s-1)\cup (r,s)}^{(k-1)}(n+1).
\end{split}
\end{equation}

As for power sums, these two recurrence relations determine the coefficients $a^{(k)}_\mu(n)$ by a triple induction: on $k$, on $n$ and on the lowest part of $\mu$. Using the Lemma given in the Appendix with $z=0$, we also obtain the following result.
\begin{theo}
In the class expansion
\[h_k(J_1,\ldots,J_n)=\sum_{\rho}c^{(k)}_\rho \, \binom{n-|\overline{\rho}|}{m_1(\rho)} \, C_{\tilde{\rho}},\]
the coefficients $c^{(k)}_\rho$ are determined by the two recurrence relations
\begin{equation}
c^{(k)}_{\rho\cup (1)}=\sum_{r \ge 1} rm_r(\rho) \,c^{(k-1)}_{\rho\setminus (r) \cup(r+1)},
\end{equation}
\begin{equation}
\begin{split}
\sum_{r \ge 1} rm_r(\rho) \,c^{(k)}_{\rho\setminus (r) \cup(r+1)}&=
2|\rho|\,c^{(k-1)}_\rho +m_1(\rho)\,c^{(k-1)}_{\rho\setminus (1)}\\
&+\sum_{r,s \ge 1} rsm_r(\rho)(m_s(\rho)-\delta_{rs})\, 
c_{\rho \setminus (r,s)\cup (r+s+1)}^{(k-1)}\\
&+\sum_{r,s \ge 1} (r+s-1)m_{r+s-1}(\rho)\,
c_{\rho \setminus (r+s-1)\cup (r,s)}^{(k-1)}.
\end{split}
\end{equation}
\end{theo}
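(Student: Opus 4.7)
The plan is to apply the general Appendix Lemma (with the parameter $z$ set to $0$) to the pair of recurrences (7.1)--(7.2) already derived for the $a^{(k)}_\mu(n)$, using the substitution
\[a^{(k)}_\mu(n)=\sum_{\overline{\rho}=\overline{\mu}}c^{(k)}_\rho \, \binom{n-|\overline{\rho}|}{m_1(\rho)}\]
from Section 2.8. Since this substitution is a linear isomorphism onto polynomials in $n$ with the indicated binomial basis, it suffices to substitute, collect terms according to the partition $\rho$ (as opposed to its reduction $\overline{\rho}=\overline{\mu}$), and identify coefficients of each binomial $\binom{n-|\overline{\rho}|}{m_1(\rho)}$ separately.

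First I would treat (7.1). This relation is identical in shape to (6.1) in the power-sum case: the right-hand side contains no explicit factor $n$, only coefficients $a^{(\bullet)}_\nu(m)$ evaluated at partitions $\nu$ whose reductions $\overline{\nu}$ are transparent. Consequently the passage (7.1) $\Rightarrow$ (7.3) proceeds identically to (6.1) $\Rightarrow$ (6.3); the key combinatorial point is that creating a part $1$ inside $\rho$ corresponds to a shift $m_1(\rho)\to m_1(\rho)+1$ on the right, and the binomials already account for this.

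Second I would treat (7.2). This is the more delicate step, and it is also the only place where the $h_k$-theorem diverges from the $p_k$-theorem. Relation (7.2) differs from (6.2) solely by the absence of the term $-n a^{(k-1)}_{\mu}(n)$; correspondingly, the Appendix Lemma is designed with a parameter $z$ weighting this term, $z=1$ recovering (6.4) and $z=0$ giving (7.4). The mechanism for how the single coefficient $|\rho|\,c^{(k-1)}_\rho$ from (6.4) inflates to $2|\rho|\,c^{(k-1)}_\rho + m_1(\rho)\,c^{(k-1)}_{\rho \setminus (1)}$ in (7.4) is the binomial identity
\[n\binom{n-|\overline{\rho}|}{m_1(\rho)} = |\rho|\binom{n-|\overline{\rho}|}{m_1(\rho)} + (m_1(\rho)+1)\binom{n-|\overline{\rho}|}{m_1(\rho)+1},\]
together with the observation (already made after (6.2)) that the $r=1$ and $s=1$ subcases of the double sum $\sum rs m_r(\mu)(m_s(\mu)-\delta_{rs})\,a^{(k-1)}_{\mu\setminus(r,s)\cup(r+s+1)}(n+1)$ combine into the contribution $(2n-m_1(\mu))\,a^{(k-1)}_{\mu\cup(1)}(n+1)$. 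It is precisely the $n$-factor sitting in $(2n-m_1(\mu))$ that, when transported through the binomial identity above, splits into the two summands appearing in (7.4).

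The main obstacle is the bookkeeping: one must consistently distinguish the index pairs $(r,s)$ with $r=1$ or $s=1$ from the others on both sides of (7.2), track which contributions create (or destroy) parts equal to $1$ in the relevant partitions, and verify that the factor $n$ appearing in $(2n-m_1(\mu))$ is the \emph{only} source of non-polynomial behaviour in $n-|\overline{\rho}|$. Once this is cleanly separated, the Appendix Lemma with $z=0$ accepts the input and outputs (7.4); the sums indexed by $r,s\ge 2$ on the right of (7.2) translate verbatim into the two double sums on the right of (7.4), and no further adjustment is required.
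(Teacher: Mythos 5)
Your proposal is correct and follows essentially the same route as the paper: the recurrences (7.1)--(7.2) derived in Section 7.1 are fed into the Appendix Lemma specialized at $z=0$, whose proof contains exactly the bookkeeping you describe (isolating the parts equal to $1$, and the $r=1$ or $s=1$ terms of the double sum). The only small imprecision is that the binomial identity actually doing the work is
$(n+|\overline{\rho}|)\binom{n-|\overline{\rho}|+1}{m_1(\rho)}=(m_1(\rho)+1)\binom{n-|\overline{\rho}|}{m_1(\rho)+1}+2|\rho|\binom{n-|\overline{\rho}|}{m_1(\rho)}+(2|\overline{\rho}|+m_1(\rho)-1)\binom{n-|\overline{\rho}|}{m_1(\rho)-1}$,
since the contribution $(2n-m_1(\mu))\,a^{(k-1)}_{\mu\cup(1)}(n+1)$ carries both the factor $n+|\overline{\mu}|$ and the shift $n\to n+1$; your identity $n\binom{n-a}{b}=(a+b)\binom{n-a}{b}+(b+1)\binom{n-a}{b+1}$ is a correct ingredient but not the whole step.
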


\noindent\textit{Remarks:} \textit{(i)} Surprisingly relation (7.1) (resp.(7.3)) is identical with (6.1) (resp. (6.3)).

\noindent\textit{(ii)} In the second sum at the right-hand side of (7.4), the cases $r=1$ or $s=1$ give the total contribution $(2|\rho|-m_1(\rho))\,c_{\rho \cup (1)}^{(k-1)}$.

There is empirical evidence that the coefficients $c_{\rho}^{(k)}$ are \textit{positive integers}. However our recurrence relations are not sufficient to prove this property. A more thorough analysis is needed, which can be found in~\cite[Section 2.4]{Fer2}.

The generating function of the coefficients $c_{\rho}^{(k)}$ will be studied in Section 9.6, in a more general context.

\subsection{Leading terms}

\begin{prop}
The coefficients $c_{\rho}^{(k)}$ are non zero only if $|\rho|-l(\rho)= k-2i$ for some $i\ge 0$. Moreover if $|\rho|-l(\rho)=k$, they are non zero only if $m_1(\rho)=0$.
\end{prop}
\begin{proof}
This is shown by induction on $k$ and on the lowest part $p$ of $\rho$. Firstly if $p=1$, we write (7.3) for $\sigma=\rho \setminus (1)$. At the left-hand side we obtain $c^{(k)}_{\rho}$. By the inductive hypothesis, the right-hand side is non zero only if $|\sigma|+1-l(\sigma)=k-1-2i$ for some $i\ge 0$. Since $|\sigma|-l(\sigma)=|\rho|-l(\rho)$, we conclude that $c^{(k)}_{\rho}$ is non zero only if $|\rho|-l(\rho)=k-2i-2$ for some $i\ge 0$.

In particular if $|\rho|-l(\rho)=k$ and $m_1(\rho)\ge 1$, i.e. if $p=1$, we see that $c^{(k)}_{\rho}$ is necessarily zero.

Secondly if the lowest part of $\rho$ is $p>1$, we write (7.4) for $\sigma=\rho \setminus (p) \cup (p-1)$. We consider induction on $k$ for each contribution at the right-hand side. The terms $c^{(k-1)}_\sigma$ and $c^{(k-1)}_{\sigma\setminus (1)}$ are non zero only if $|\sigma|-l(\sigma)=k-1-2i$ with $i\ge 0$. Since $|\sigma|-l(\sigma)=|\rho|-l(\rho)-1$, these terms are non zero only if $|\rho|-l(\rho)=k-2i$ for some $i\ge 0$. Similarly the first sum is non zero only if $|\sigma|+1-l(\sigma)+1=k-1-2i$ with $i\ge 0$, hence only if $|\rho|-l(\rho)=k-2i-2$ for some $i\ge 0$. Finally the second sum at the right-hand side is non zero only if $|\sigma|+1-l(\sigma)-1=k-1-2i$ with $i\ge 0$, hence only if $|\rho|-l(\rho)=k-2i$ for some $i\ge 0$.

At the left-hand side of (7.4) we obtain
\[
(p-1)c_{\rho}^{(k)}+\sum_{r \ge p} r(m_r(\rho)-\delta_{rp}) \,c^{(k)}_{\rho\setminus (p,r) \cup(p-1,r+1)}.\]
Since any partition $\sigma$ appearing in the sum has lowest part $p-1$ and since $|\sigma|-l(\sigma)=|\rho|-l(\rho)$, we conclude by induction on $p$.
\end{proof}

The following result was proved independently by Murray~\cite{Mur} and Novak~\cite{No1}.
\begin{prop}
If $|\rho|-l(\rho)=k$ and $m_1(\rho)=0$, we have
\[c_{\rho}^{(k)}=\prod_{i=1}^{l(\rho)}C_{\rho_i-1},\]
with $C_r$ the Catalan number $(2r)!/(r+1)!r!$. \end{prop}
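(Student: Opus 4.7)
The plan is a double induction: outer on $k$ and, for fixed $k$, inner on the lowest part $p$ of $\rho$ (with $p \ge 2$ since $m_1(\rho) = 0$). The base case $k = 0$ forces $\rho = \emptyset$ and both sides equal $1$. In the inductive step I apply the recurrence $(7.4)$ to $\mu = \rho \setminus (p) \cup (p-1)$, which satisfies $|\mu| - l(\mu) = k-1$ and $m_{p-1}(\mu) = 1$, and plays the role of a ``leading-adjacent'' partition. The convenient feature is that the unknown $c^{(k)}_\rho$ appears on the LHS through the $r = p-1$ term with coefficient $p - 1$, while every other LHS contribution either vanishes by Proposition 7.2 or is known by the inner induction.

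When $p = 2$, $\mu$ has $m_1(\mu) = 1$. The $r=1$ term on the LHS yields exactly $c^{(k)}_\rho$; for each $r \ge 2$ the partition $\mu \setminus (r) \cup (r+1)$ has $|\cdot| - l(\cdot) = k$ and $m_1 \ge 1$, so its $c^{(k)}$ vanishes by Proposition 7.2. On the RHS, $c^{(k-1)}_\mu$ vanishes for the same reason, the third sum drops out by a dimension mismatch ($|\cdot| - l(\cdot) = k+1$), and in the fourth sum every resulting partition still has $m_1 \ge 1$ and hence contributes $0$. Only $m_1(\mu)\, c^{(k-1)}_{\mu \setminus (1)} = c^{(k-1)}_{\rho \setminus (2)}$ survives; by the outer induction this equals $\prod_i C(\rho_i - 1)$ (using $C(1)=1$), so $c^{(k)}_\rho = \prod_i C(\rho_i - 1)$.

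For $p \ge 3$, $\mu$ itself is leading at exponent $k-1$. The LHS of $(7.4)$ reads $(p-1)\, c^{(k)}_\rho + \sum_{r \ge p} r\, m_r(\mu)\, c^{(k)}_{\mu \setminus (r) \cup (r+1)}$, and each summand with $r \ge p$ has lowest part $p-1$, so by the inner induction its value is $Q\, C(r)/C(r-1)$ with $Q = \prod_i C(\mu_i - 1)$. On the RHS, outer induction gives $c^{(k-1)}_\mu = Q$, the third sum vanishes, and the fourth sum is evaluated by the Catalan recursion
\[
\sum_{\substack{a+b = t-1 \\ a,b \ge 1}} C(a)\,C(b) = C(t) - 2\,C(t-1),
\]
which turns the inner sum over $r+s=t+1$, $r,s \ge 2$ into $C(t)/C(t-1) - 2$. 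Summing over $t$ collapses the fourth sum to $Q(L - 2|\mu|)$, where $L = \sum_{r \ge 2} r\, m_r(\mu)\, C(r)/C(r-1)$; the $-2|\mu|$ cancels against the $2|\mu|\, c^{(k-1)}_\mu$ contribution, giving $\mathrm{RHS} = QL$. Equating with the LHS yields $c^{(k)}_\rho = Q\, C(p-1)/C(p-2)$, and substituting $Q = C(p-2)\prod_i C(\rho_i-1)/C(p-1)$ gives the desired product formula.

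The main obstacle is taming the fourth sum in $(7.4)$: \emph{a priori} it involves $c^{(k-1)}$ at many partitions obtained by splitting one part of $\mu$ into two, and no closed formula is available unless the resulting partition is still leading. The argument closes thanks to two ingredients working in tandem: the sharp vanishing in Proposition 7.2 kills every non-leading contribution (in particular all summands with $r=1$ or $s=1$), and the Catalan convolution identity produces exactly the factor $C(p-1)/C(p-2)$ needed to upgrade $Q$ to $\prod_i C(\rho_i - 1)$. Without either ingredient the cancellations that reduce $(7.4)$ to a single equation in $c^{(k)}_\rho$ would break down.
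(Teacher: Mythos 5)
Your proof is correct and follows essentially the same route as the paper's: both apply the reduced recurrence (7.4) at $\sigma=\rho\setminus(p)\cup(p-1)$, use Proposition 7.2 to kill all non-leading contributions, and close with the Catalan recursion $C(t)=\sum_{i=1}^{t-2}C(i)C(t-i-1)+2C(t-1)$. Your split into the cases $p=2$ and $p\ge 3$, with explicit bookkeeping of which terms survive, is simply a more detailed rendering of the paper's ``substitute the statement into (7.4) and verify'' step.
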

\begin{proof}
Let $p$ be the lowest part of $\rho$. Since $m_1(\rho)=0$ we have $p\ge 2$. In the proof above, we have seen that equation (7.4) written for $\sigma=\rho\setminus (p)\cup (p-1)$ defines $c_{\rho}^{(k)}$ inductively. First we assume $p\ge 3$. This equation takes the form
\begin{equation}
\sum_{r \ge 2} rm_r(\sigma) \,c^{(k)}_{\sigma\setminus (r) \cup(r+1)}=
2|\sigma|\,c^{(k-1)}_\sigma +\sum_{r,s \ge 2} (r+s-1)m_{r+s-1}(\sigma)\,
c_{\sigma \setminus (r+s-1)\cup (r,s)}^{(k-1)}.
\end{equation}
Actually at its right-hand side we have $m_1(\sigma)=0$ and the first sum vanishes because it is non zero only if $|\rho|-l(\rho)=k-2i-2$ for some $i\ge 0$, in contradiction with $|\rho|-l(\rho)=k$. Observe that for the same reason, the terms in the second sum with $r=1$ or $s=1$ involve partitions with lowest part 1, and thus vanish. Finally it is enough to substitute the statement into (7.5) and to prove
\[\sum_{r \ge 2} rm_r(\sigma)\frac{C_r}{C_{r-1}}=2\sum_{r \ge 2} rm_r(\sigma)+\sum_{t \ge 2} tm_{t}(\sigma)\,\sum_{\begin{subarray}{c}r,s \ge 2\\r+s=t+1\end{subarray}}
\frac{C_{r-1}C_{s-1}}{C_{t-1}}.\]
But this is an obvious consequence of 
\[C_r=2C_{r-1}+\sum_{i=1}^{r-2}C_iC_{r-i-1},\]
a well known recurrence for Catalan numbers. It remains to consider the case $p=2$, i.e. $\sigma=\rho\setminus (2)\cup (1)$ and $m_1(\sigma)=1$. In this case (7.4) becomes
\[c_{\sigma\setminus (1)\cup (2)}^{(k)}=c_{\sigma\setminus (1)}^{(k-1)},\quad \textrm{i.e.} \quad c_{\rho}^{(k)}=c_{\rho\setminus (2)}^{(k-1)},\]
because all other terms involve partitions with lowest part 1, and thus vanish due to $|\rho|-l(\rho)=k$. The statement is then a consequence of $C_1=1$.
\end{proof}

\section{Hall-Littlewood functions}

Let $z$ be an indeterminate and $P_\la(z)$ denote the Hall-Littlewood symmetric functions~\cite[Chapter 3]{Ma}. When $\la$ is the row partition $(k)$, it is known that $P_k(z)$ interpolates between the power-sum $p_k$ and the complete function $h_k$, namely
\[P_k(0)=h_k,\qquad P_k(1)=p_k.\]
Therefore it is a natural problem to study the development of $P_k(J_1,\ldots,J_n;z)$. This study will provide a deeper understanding of the case of complete functions through a continuous deformation of the case of power sums.

For clarity of display, the parameter $z$ being kept fixed, we shall omit its dependence any time it does not bring confusion. However we emphasize that most of the quantities introduced below are polynomials in $z$. We write
\[P_k(A_\la)=\sum_{|\mu|=n} a^{(k)}_\mu(n) \, \ta_\mu^\la
=\sum_{\rho}c^{(k)}_\rho \, \binom{n-|\overline{\rho}|}{m_1(\rho)} \,\ta^\la_{\tilde{\rho}}.\]

\subsection{Our equations}

Given two alphabets $A,B$ and a partition $\rho$, we have the fundamental formula
\[P_\rho(A\cup B)=\sum_{\sigma\subset \rho}P_{\rho/\sigma}(B)P_\sigma(A)\]
involving skew Hall-Littlewood functions $P_{\rho/\sigma}$~\cite[(5.5'), p. 228]{Ma}. When $B$ has only one element $b$, $P_{\rho/\sigma}(B)=0$ unless $\rho\setminus \sigma$ is a horizontal strip. In this case $P_{\rho/\sigma}(B)=\psi_{\rho/\sigma}b^{|\rho|-|\sigma|}$, where $\psi_{\rho/\sigma}=\prod_{j\in J}(1-z^{m_j(\sigma)})$ and $J$ is the set of $j$ such that $\rho\setminus \sigma$ has no node in the column $j$ and one node in the column $j+1$~\cite[(5.8') and (5.14'), p. 229]{Ma}.

Applying this classical result to the alphabet of contents $A_\la$, and writing $u_i=\la_i-i+1$, we obtain
\[P_\rho(A_{\la^{(i)}})=\sum_{\sigma\subset \rho}P_{\sigma}(A_\la)\,\psi_{\rho/\sigma}\,u_i^{|\rho|-|\sigma|},\]
summed over partitions $\sigma$ such that $\rho\setminus \sigma$ is a horizontal strip. This is the form taken by (4.12) for Hall-Littlewood symmetric functions.

When $\rho=(k)$ we obtain 
\[P_{k}(A_{\la^{(i)}})=P_{k}(A_{\la})+(1-z)\sum_{j=1}^{k-1} P_{k-j}(A_{\la})\,u_i^{j}+u_i^{k}.\]
Then relations (3.1) and (3.5) yield
\begin{align*}
\sum_{i=1}^{l(\la)+1} 
c_i(\la) \,P_k(A_{\la^{(i)}})&=P_k(A_{\la})+(1-z)\sum_{j=2}^{k-1}P_{k-j}(A_{\la})\,\sigma_j(\la)+\sigma_{k}(\la),\\
\sum_{i=1}^{l(\la)+1} 
c_i(\la) \,u_i\,P_k(A_{\la^{(i)}})&=(1-z)\sum_{j=2}^{k}P_{k-j+1}(A_{\la})\,\sigma_{j}(\la)+\sigma_{k+1}(\la),\\
\sum_{i=1}^{l(\la)+1} 
c_i(\la) \,u_i^2\,P_k(A_{\la^{(i)}})&
=P_k(A_{\la})\sigma_{2}(\la)+(1-z)\sum_{j=3}^{k+1}P_{k-j+2}(A_{\la})\,\sigma_{j}(\la)+\sigma_{k+2}(\la).
\end{align*}
Here we have omitted the details of the computation since it goes exactly as in Section 7.1. By elimination, we get immediately
\begin{align*}
\sum_{i=1}^{l(\la)+1} 
c_i(\la) \,P_k(A_{\la^{(i)}})&=P_k(A_{\la})+\sum_{i=1}^{l(\la)+1} 
c_i(\la) \,u_i\,P_{k-1}(A_{\la^{(i)}}),\\
\sum_{i=1}^{l(\la)+1} 
c_i(\la) \,u_i\,P_k(A_{\la^{(i)}})&=-nzP_{k-1}(A_{\la})+\sum_{i=1}^{l(\la)+1} 
c_i(\la) \,u_i^2\,P_{k-1}(A_{\la^{(i)}}).
\end{align*}

\subsection{Recurrence}

Applying (4.8)--(4.10) the previous relations may be written as
\begin{align*}
\sum_{|\mu|=n+1} a^{(k)}_\mu(n+1) \ta_{\mu \setminus  (1)}^\la&=\sum_{|\nu|=n} a^{(k)}_\nu(n)\, \ta_\nu^\la\\
&+\sum_{|\mu|=n+1} a_\mu^{(k-1)}(n+1)\, \sum _{r\ge 1}
r\big(m_r(\mu)+1\big)\theta^\la_{\mu \setminus (r+1)\cup (r)},\\
\sum_{|\mu|=n+1} a_\mu^{(k)}(n+1)\, &\sum _{r\ge 1}
r\big(m_r(\mu)+1\big)\theta^\la_{\mu \setminus (r+1)\cup (r)}=
-nz\sum_{|\nu|=n} a^{(k-1)}_{\nu}(n)\, \ta_\nu^\la\\
&+\sum_{|\mu|=n+1} a_\mu^{(k-1)}(n+1)\,
\Big((2n-m_1(\mu)+1) \, \ta^\la_{\mu \setminus 1}\\
&+\sum_{r,s \ge 1} rs(m_r(\mu)+1)(m_s(\mu)+\delta_{rs}+1)\, 
\ta^{\la}_{\mu \setminus (r+s+1)\cup (r,s)}
\\ &+ \sum_{r,s \ge 2} (r+s-1)(m_{r+s-1}(\mu)+1)\,
\ta^{\la}_{\mu \setminus (r,s)\cup (r+s-1)}\Big).
\end{align*}
By identification of coefficients on both sides, for any $\mu\vdash n$ we get
\begin{equation}
a^{(k)}_{\mu\cup(1)}(n+1)=a^{(k)}_\mu(n)+\sum _{r\ge 1}
rm_r(\mu)\,a^{(k-1)}_{\mu \setminus (r)\cup (r+1)}(n+1),
\end{equation}
\begin{equation}
\begin{split}
\sum _{r\ge 1}
rm_r(\mu)\,a^{(k)}_{\mu \setminus (r)\cup (r+1)}(n+1)&=
-nza^{(k-1)}_{\mu}(n)\\
&+\sum_{r,s \ge 1} rsm_r(\mu)(m_s(\mu)-\delta_{rs})\, 
a_{\mu \setminus (r,s)\cup (r+s+1)}^{(k-1)}(n+1)
\\ &+ \sum_{r,s \ge 1} (r+s-1)m_{r+s-1}(\mu)\,
a_{\mu \setminus (r+s-1)\cup (r,s)}^{(k-1)}(n+1).
\end{split}
\end{equation}

As previously, these two recurrence relations determine the coefficients $a^{(k)}_\mu(n)$ by a triple induction: on $k$, on $n$ and on the lowest part of $\mu$. Using the Lemma in the Appendix, we also obtain the following result.
\begin{theo}
We have the class expansion
\[P_k(J_1,\ldots,J_n;z)=\sum_{\rho}c^{(k)}_\rho \, \binom{n-|\overline{\rho}|}{m_1(\rho)} \, C_{\tilde{\rho}},\]
where the coefficients $c^{(k)}_\rho$ are determined by the recurrence relations
\begin{equation}
c^{(k)}_{\rho\cup (1)}=\sum_{r \ge 1} rm_r(\rho) \,c^{(k-1)}_{\rho\setminus (r) \cup(r+1)},
\end{equation}
\begin{equation}
\begin{split}
\sum_{r \ge 1} rm_r(\rho) \,c^{(k)}_{\rho\setminus (r) \cup(r+1)}&=
(2-z)|\rho|\,c^{(k-1)}_\rho+(1-z)m_1(\rho)\,c^{(k-1)}_{\rho\setminus (1)}\\
&+\sum_{r,s \ge 1} rsm_r(\rho)(m_s(\rho)-\delta_{rs})\, 
c_{\rho \setminus (r,s)\cup (r+s+1)}^{(k-1)}\\
&+\sum_{r,s \ge 1} (r+s-1)m_{r+s-1}(\rho)\,
c_{\rho \setminus (r+s-1)\cup (r,s)}^{(k-1)}.
\end{split}
\end{equation}
\end{theo}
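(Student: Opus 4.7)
The strategy mirrors exactly the arguments already carried out for Theorems~6.1 and~7.1. I would begin from the two recurrence relations (8.1)--(8.2) for the coefficients $a_\mu^{(k)}(n)$, which the preceding computation has already established by our standard method (first step: the skew Hall-Littlewood Pieri formula; second step: elimination of moments $\sigma_j(\la)$; third/fourth steps: application of Theorem~4.2 and identification of coefficients). By the general dependence-on-$n$ isomorphism recalled in Section~2.8, these coefficients admit the expansion
\[a_\mu^{(k)}(n)=\sum_{\overline{\rho}=\overline{\mu}} c_\rho^{(k)}\binom{n-|\overline{\rho}|}{m_1(\rho)},\]
with $c_\rho^{(k)}$ independent of $n$. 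My goal is to substitute this expansion into (8.1) and (8.2) and extract the equivalent recurrences (8.3) and (8.4) involving only the $c_\rho^{(k)}$'s.

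For the first relation (8.1), the substitution produces a binomial identity identical in form to the one already handled for $p_k$ and $h_k$, and (8.3) follows directly. For (8.2), the non-trivial work concerns the term $-nz\,a_\mu^{(k-1)}(n)$ on the right-hand side. This is precisely the situation covered by the Lemma of the Appendix, which has already been used with $z=1$ to obtain $|\rho|\,c^{(k-1)}_\rho$ in the power-sum case (Theorem~6.1) and with $z=0$ to obtain $2|\rho|\,c^{(k-1)}_\rho+m_1(\rho)\,c^{(k-1)}_{\rho\setminus (1)}$ in the complete case (Theorem~7.1). Applying the same Lemma with the indeterminate $z$ kept free, the $-nz$ term splits into $(2-z)|\rho|\,c^{(k-1)}_\rho+(1-z)m_1(\rho)\,c^{(k-1)}_{\rho\setminus (1)}$, which matches (8.4) exactly. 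As a sanity check, the specializations $z=1$ and $z=0$ recover (6.4) and (7.4) respectively, consistent with $P_k(1)=p_k$ and $P_k(0)=h_k$.

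The remaining task is purely formal: verify that (8.3) and (8.4) define the coefficients $c_\rho^{(k)}$ uniquely by the triple induction on $k$, on the weight $|\rho|$, and on the lowest part of $\rho$, exactly as in the final step of Theorems~5.1, 6.1 and~7.1. Namely, relation (8.3) determines $c_\rho^{(k)}$ when $\rho$ has lowest part $1$, and for lowest part $p>1$ one writes (8.4) with $\sigma=\rho\setminus(p)\cup(p-1)$ and solves the resulting equation for $c_\rho^{(k)}$; all other partitions appearing on either side then have lowest part strictly smaller than $p$, so the induction closes.

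The main (and essentially only) obstacle is the bookkeeping in the Appendix Lemma computation, which must be carried out with general $z$ rather than at $z=0$ or $z=1$. Since the Lemma is stated and proved there for an arbitrary parameter, and the $z$-dependence enters linearly in (8.2) only through the single scalar factor $-nz$, no new technical difficulty arises: the same binomial manipulations that produced (6.3)--(6.4) and (7.3)--(7.4) yield (8.3)--(8.4) upon tracking the $z$ through the linear combination.
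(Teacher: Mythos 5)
Your proposal is correct and follows essentially the same route as the paper: derive the recurrences (8.1)--(8.2) for $a_\mu^{(k)}(n)$ by the standard first-to-fourth steps, expand $a_\mu^{(k)}(n)$ in binomial coefficients as in Section 2.8, and invoke the Appendix Lemma with the indeterminate $z$ kept free to obtain (8.3)--(8.4), the only $z$-dependence entering through the $-nz\,a_\mu^{(k-1)}(n)$ term. Your closing induction on $k$ and on the lowest part of $\rho$ matches the paper's remark on how (8.3) and (8.4) determine the coefficients.
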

We recover Theorems 6.1 or 7.1 by making $z=1$ or $z=0$.

\subsection{Leading terms}

The following result is proved exactly as Proposition 7.2.
\begin{prop}
The coefficients $c_{\rho}^{(k)}$ are non zero only if $|\rho|-l(\rho)= k-2i$ for some $i\ge 0$. Moreover if $|\rho|-l(\rho)=k$, they are non zero only if $m_1(\rho)=0$.
\end{prop}

For any nonnegative integer $r$, we denote by $C_r(z)$ the polynomial in $z$ defined by $C_0(z)=1$ and the recurrence formulas
\begin{align}
\notag C_r(z)&=(1-z)C_{r-1}(z)+z\,\sum_{i=0}^{r-1}C_i(z)C_{r-i-1}(z)\\
&=(1+z)C_{r-1}(z)+z\,\sum_{i=1}^{r-2}C_i(z)C_{r-i-1}(z),
\end{align}
the second relation valid for $r\ge3$. Clearly we have $C_r(0)=1$, $C_r(1)=C_r$, the Catalan number. The first values of $C_r(z)$ are given by
\begin{align*}
C_1(z)=1,\quad C_2(z)=z+1,&\quad C_3(z)=z^2+3z+1,\\ 
C_4(z)=z^3+6z^2+6z+1, &\quad C_5(z)=z^4+10z^3+20z^2+10z+1.
\end{align*}

Using known results about generating functions~\cite{S,Z}, we have
\[C_r(z)=\sum_{k=1}^{r} N(r,k) z^{k-1},\]
where the Narayana numbers $N(r,k)$ are defined by
\[ N(r,k)=\frac{1}{r} \binom{r}{k-1} \binom{r}{k}.\]
This polynomial, called Narayana polynomial, is a $z$-refinement of Catalan numbers because
\[\sum_{k=1}^{r} N(r,k)=C_r.\]
For more details we refer to~\cite{La5} and references therein.

\begin{prop}
If $|\rho|-l(\rho)=k$ and $m_1(\rho)=0$, we have
\[c_{\rho}^{(k)}=(1-z)^{l(\rho)-1}\prod_{i=1}^{l(\rho)}C_{\rho_i-1}(1-z).\]
\end{prop}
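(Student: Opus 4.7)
The plan is to mimic the double-induction argument given for Proposition 7.3 (the $z=0$ case, where $\mathcal{C}$ specialises to the Catalan numbers). I would run an outer induction on $k=|\rho|-l(\rho)$ and, for each fixed $k$, an inner induction on the lowest part $p\ge 2$ of $\rho$ (the bound $p\ge 2$ coming from $m_1(\rho)=0$).

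The outer base $k=1$ forces $\rho=(2)$, and since $P_1(z)=p_1$ is independent of $z$ the classical identity $P_1(J_1,\ldots,J_n)=\sum_i J_i = C_{(2,1^{n-2})}$ yields $c^{(1)}_{(2)}=1=\mathcal{C}(1)$. For $k\ge 2$ I would apply the recurrence (8.4) to $\sigma=\rho\setminus(p)\cup(p-1)$. The crucial filter is Proposition 8.2: every coefficient $c^{(k)}_\tau$ or $c^{(k-1)}_\tau$ appearing in (8.4) whose index $\tau$ is outside the leading regime, or satisfies $m_1(\tau)>0$ while in the leading regime, vanishes. This collapses both sides dramatically.

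When $p=2$, $\sigma$ has $m_1(\sigma)=1$, so every LHS term other than the $r=1$ term (which equals $c^{(k)}_\rho$) produces a partition with $m_1=1$ in the leading regime and vanishes; on the RHS only the contribution $(1-z)\,m_1(\sigma)\,c^{(k-1)}_{\sigma\setminus(1)}=(1-z)\,c^{(k-1)}_{\rho\setminus(2)}$ survives. Applying the outer hypothesis to $\rho\setminus(2)$, combined with $\mathcal{C}(1)=1$ to reinstate the factor corresponding to the removed part $2$, immediately gives the claimed formula. When $p\ge 3$, $\sigma$ has $m_1=0$ and lowest part $p-1$: the LHS splits as $(p-1)c^{(k)}_\rho$ plus a sum over partitions of lowest part $\le p-1$ known by the inner hypothesis, and on the RHS only $(2-z)|\sigma|\,c^{(k-1)}_\sigma$ and the part of the second double sum with $r,s\ge 2$ survive, all of them given by the outer hypothesis.

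The main obstacle lies in the algebraic identity needed in the case $p\ge 3$. Factoring out $A=(1-z)^{l(\rho)-1}\prod_i\mathcal{C}(\rho_i-1)$ from every term and using Proposition 8.3 to rewrite each inner sum as
\[
\sum_{\substack{r+s=t+1\\ r,s\ge 2}}\mathcal{C}(r-1)\,\mathcal{C}(s-1)=\frac{\mathcal{C}(t)-(2-z)\,\mathcal{C}(t-1)}{1-z},
\]
the $(2-z)|\sigma|$ contributions coming from $(2-z)|\sigma|\,c^{(k-1)}_\sigma$ and from the second double sum telescope away, and the identity reduces to
\[
T'-T=(p-1)\,\frac{\mathcal{C}(p-1)}{\mathcal{C}(p-2)}-p\,\frac{\mathcal{C}(p)}{\mathcal{C}(p-1)},
\]
with $T=\sum_r r\,m_r(\rho)\,\mathcal{C}(r)/\mathcal{C}(r-1)$ and $T'$ the same sum for $\sigma$. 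Since $\sigma$ differs from $\rho$ exactly by replacing one $p$ with $p-1$, this equation holds tautologically, closing the induction; this is the direct generalisation of the Catalan recurrence argument used at $z=0$.
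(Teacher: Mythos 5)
Your proof is correct and follows essentially the same route as the paper's: the same double induction on $k$ and on the lowest part $p$ of $\rho$, applied to the recurrence (8.4) written for $\sigma=\rho\setminus(p)\cup(p-1)$, with Proposition 8.2 used to kill the non-leading terms and the final verification reducing to the convolution recurrence for $\mathcal{C}$ of Proposition 8.3. The only difference is that you make explicit the inner base case $p=2$ (where the surviving term is $(1-z)\,m_1(\sigma)\,c^{(k-1)}_{\sigma\setminus(1)}$) and the outer base case $k=1$, both of which the paper leaves implicit in its appeal to the argument of Proposition 7.3.
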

\begin{proof}
The proof is done by induction on $k$ and the lowest part $p$ of $\rho$, exactly as in Proposition 7.3. Let $p$ be the lowest part of $\rho$. Since $m_1(\rho)=0$ we have $p\ge 2$. Equation (8.4) written for $\sigma=\rho\setminus (p)\cup (p-1)$ defines $c_{\rho}^{(k)}$ inductively. First we assume $p\ge 3$. This equation takes the form
\begin{equation}
\sum_{r \ge 2} rm_r(\sigma) \,c^{(k)}_{\sigma\setminus (r) \cup(r+1)}=
(2-z)|\sigma|\,c^{(k-1)}_\sigma +\sum_{r,s \ge 2} (r+s-1)m_{r+s-1}(\sigma)\,
c_{\sigma \setminus (r+s-1)\cup (r,s)}^{(k-1)},
\end{equation}
because at the right-hand side $m_1(\sigma)=0$, the first sum vanishes due to $|\rho|-l(\rho)=k$, and the terms in the second sum with $r,s=1$ vanish for the same reason. Finally it is enough to substitute the statement into (8.6) and to prove
\begin{multline*}
\sum_{r \ge 2} rm_r(\sigma) \frac{C_r(1-z)}{C_{r-1}(1-z)}\\=(2-z)\sum_{r \ge 2} rm_r(\sigma) +(1-z)\sum_{u \ge 2} um_{u}(\sigma)\,\sum_{\begin{subarray}{c}r,s \ge 2\\r+s=u+1\end{subarray}}
\frac{C_{r-1}(1-z)C_{s-1}(1-z)}{C_{u-1}(1-z)}.
\end{multline*}
But this is an obvious consequence of (8.5) written for $1-z$. It remains to consider the case $p=2$, i.e. $\sigma=\rho\setminus (2)\cup (1)$ and $m_1(\sigma)=1$. In this case (8.4) becomes
\[c_{\sigma\setminus (1)\cup (2)}^{(k)}=(1-z)c_{\sigma\setminus (1)}^{(k-1)},\quad \textrm{i.e.} \quad c_{\rho}^{(k)}=(1-z)c_{\rho\setminus (2)}^{(k-1)},\]
because all other terms involve partitions with lowest part 1, and thus vanish due to $|\rho|-l(\rho)=k$. The statement is then a consequence of $C_1(1-z)=1$.
\end{proof}

A result of~\cite{No2} gives the leading terms of $m_\la(J_1,\ldots,J_n)$, with $m_\la$ a monomial symmetric function. As a corollary of Proposition 8.3 we may obtain a similar, but weaker, result for the symmetric function
\[p_{r,s}=\sum_{\begin{subarray}{c}|\la|=r\\
l(\la)=s\end{subarray}}m_\la.\]
We shall make use of the following property~\cite[Theorem 2]{La5}
\begin{equation}
C_r(1-z)=\frac{1}{r+1}P_r(1^{r+1};z).
\end{equation}
 
\begin{prop}
The leading terms of 
\[p_{r,s}(J_1,\ldots,J_n)=\sum_{\rho}c^{(r,s)}_\rho \, \binom{n-|\overline{\rho}|}{m_1(\rho)} \, C_{\tilde{\rho}},\]
are obtained for $|\rho|-l(\rho)=r$ and $m_1(\rho)=0$. Their coefficients are
\[c_{\rho}^{(r,s)}=\sum_{
\begin{subarray}{c}1\le s_i\le \rho_i-1\\s_1+\cdots+s_{l(\rho)}=s\end{subarray}} 
\prod_{i= 1}^{l(\rho)}N(\rho_i-1,s_i).\]
\end{prop}
\begin{proof} Given an alphabet $X$ and using $\la$-ring notations (see Section 6.2), it is well known~(\cite[(2.10), p. 209]{Ma} and \cite[p. 240]{La3}) that
\begin{equation}
(1-z)P_r(X;z)=h_r[(1-z)X^\dag]=\sum_{|\la|=r} (1-z)^{l(\la)}m_\la(X)=\sum_{s=1}^r (1-z)^s p_{r,s}(X).
\end{equation}
Given a partition $\rho$ with $|\rho|-l(\rho)=r$ and $m_1(\rho)=0$, let us compare the leading coefficient on both sides. For $(1-z)P_r(J_1,\ldots,J_n;z)$, it is given by Proposition 8.3 as
\[\prod_{i=1}^{l(\rho)}\Big((1-z)C_{\rho_i-1}(1-z)\Big).\]
In view of (8.7) and (8.8) this may be written as
\[\prod_{i=1}^{l(\rho)}\Big((1-z)\frac{1}{\rho_i}P_{\rho_i-1}(1^{\rho_i};z)\Big)
=\prod_{i=1}^{l(\rho)}\sum_{m=1}^{\rho_i-1}(1-z)^m \frac{1}{\rho_i}p_{\rho_i-1,m}(1^{\rho_i}).\]
By comparison we obtain that the leading coefficient for $p_{r,s}(J_1,\ldots,J_n)$ is 
\[\sum_{
\begin{subarray}{c}1\le s_i\le \rho_i-1\\s_1+\cdots+s_{l(\rho)}=s\end{subarray}} 
\prod_{i= 1}^{l(\rho)}
\frac{1}{\rho_i}p_{\rho_i-1,s_i}(1^{\rho_i})\]
But~\cite[Example 1.2.19, p. 33]{Ma} we have
\[p_{r,s}(1^n)=\sum_{a+b=r}(-1)^{a-s}\binom{a}{s}\binom{n}{a}\binom{n+b-1}{b}=\binom{r-1}{s-1}\binom{n}{s},\]
and
\[\frac{1}{r+1}\binom{r-1}{s-1}\binom{r+1}{s}=N(r,s).\]
\end{proof}
\noindent\textit{Examples:} For $s=1$ we recover that the leading term of $p_{k,1}=p_k$ is obtained for $\rho=(k+1)$ with coefficient 1. For $s=2$ the leading terms of $p_{k,2}$ are obtained for $\rho=(k+1)$ with coefficient $\binom{k}{2}$, and for $|\rho|=(k+2)$ and $l(\rho)=2$ with coefficient 1. For $s=3$ the leading terms of $p_{k,3}$ are obtained for $\rho=(k+1)$ with coefficient $N(k,3)$, for $|\rho|=(k+2)$ and $l(\rho)=2$ with coefficient $\binom{\rho_1-1}{2}+\binom{\rho_2-1}{2}$, and for $|\rho|=(k+3)$ and $l(\rho)=3$ with coefficient 1.

\section{Generating function}

We consider the generating function of the coefficients $c_{\rho}^{(k)}$ defined by Theorem 8.1. As for $z=1$, we write it under the form
\[\phi_\rho(t)=\sum_{k\ge 0} c_{\rho}^{(k)} \, \frac{t^k}{k!}.\]
We present two methods to study this generating function. 
 
\subsection{First method: differential system}

This method is strictly parallel to the case of power sums, given in Section 6.2. However the situation is much more complicated, and strong difficulties are encountered, which makes this approach unefficient.

As in Section 6.2 we define
\[\Phi_w(t;X)= \sum_{|\rho|=w}\phi_\rho(t)\, z_\rho^{-1}p_\rho(X)
=\sum_{|\rho|=w}\psi_\rho(t)\, s_\rho(X).\]
The following result is proved exactly as Proposition 6.2.
\begin{prop}
The recurrence relations (8.3)--(8.4) are equivalent with
\begin{equation}
\frac{d}{dt}\,\frac{\partial}{\partial p_1}\Phi_w=\mathbf{E}\, \Phi_w,
\qquad \frac{d}{dt}\,\mathbf{E}\,\Phi_{w+1}= \mathbf{D}\,\Phi_{w+1}+(2-z)w\Phi_w+(1-z)p_1\Phi_{w-1}.
\end{equation}
\end{prop}

Applying (4.7), by identification of coefficients of Schur functions on both sides, Proposition 9.1 may be written equivalently as
\begin{equation}
\begin{split}
\sum_{i=1}^{l(\rho)+1}\frac{d}{dt} \,\psi_{\rho^{(i)}}(t)&=
\sum_{i=1}^{l(\rho)+1}(\rho_i-i+1) \,\psi_{\rho^{(i)}}(t),\\
\sum_{i=1}^{l(\rho)+1}(\rho_i-i+1)\frac{d}{dt} \,\psi_{\rho^{(i)}}(t)&=
\sum_{i=1}^{l(\rho)+1}(\rho_i-i+1)^2 \,\psi_{\rho^{(i)}}(t)\\
&+(2-z)|\rho|\psi_\rho(t)+(1-z)\sum_{i=1}^{l(\rho)}
\psi_{\rho_{(i)}}(t).
\end{split}
\end{equation}
This first order (overdeterminate) differential system must be solved with the initial conditions $\psi_{\rho}(0)=\delta_{\rho,(0)}$, due to $c^{(0)}_\rho=\delta_{\rho,(0)}$.

However, in spite of its very simple structure, a general solution of the differential system (9.2) is as yet unknown. By elementary means we have
\begin{align*}
\psi_{\rho}&=\psi_{\rho}\arrowvert_{z=1}, \quad \textrm{if} \quad |\rho|=2,\\
\psi_{\rho}&=(2-z)\, \psi_{\rho}\arrowvert_{z=1}, \quad \textrm{if} \quad |\rho|=3.
\end{align*}
But a general formula for $\psi_{\rho}(t)$ is lacking even when $\rho$ is a hook $(r,1^s)$. The case of hooks is only known for $0\le s\le 3$, in which cases the structure of $\psi_{(r,1^s)}(t)$ is already very messy~\cite{La7}.

\subsection{The case $z=1$ revisited}

The difficulties encountered to solve the differential system (9.2) lead us to a very different approach. We shall revisit the case $z=1$ where, according to Theorem 6.4, the generating function $\hat{\phi}_\rho:=\phi_\rho\arrowvert_{z=1}$ may be written as
\begin{equation*}
|\rho|!\, \hat{\phi}_\rho(t)=
e^{(1-|\rho|)t}\,(e^{t}-1)^{|\rho|-2}\,\prod_{i\ge 1}(e^{it}-1)^{m_i(\rho)}.
\end{equation*}
Let $\epsilon_\rho=(-1)^{|\rho|-l(\rho)}$. Expanding the right-hand side we obtain (up to a constant term obtained by $\hat{\phi}_\rho(0)=0$)
\[|\rho|!\, \hat{\phi}_\rho(t)=\sum_{k=1}^{|\rho|-1}
f^{(k)}_\rho (e^{kt}+\epsilon_\rho e^{-kt}),\]
where the coefficients $f_\rho^{(k)}$ are given as follows. 

For any integer $r\ge 0$, denote by $\mathbf{I}_r$ the family of nonnegative integers $I=(i_0,i_1,i_2,\ldots)$ linked by $i_0+\sum_{u\ge 1}ui_u=r$. For $k=|\rho|-1-r$ we have
\[f^{(k)}_{\rho}=\sum_{I\subset \mathbf{I}_r}(-1)^{|I|}\binom{|\rho|-2}{i_0}
\prod_{u \ge 1}\binom{m_u(\rho)}{i_u}.\]
For instance $\mathbf{I}_0= 0$, $\mathbf{I}_1= \{(1,0), (0,1)\}$, $\mathbf{I}_2=\{(2,0,0),(1,1,0),(0,2,0),(0,0,1)\}$ and $\mathbf{I}_3=\{(3,0,0,0),(2,1,0,0),(1,2,0,0),(0,3,0,0),(0,1,1,0),(1,0,1,0),(0,0,0,1)\}$ so that we have
\begin{align*}
f^{(|\rho|-1)}_{\rho}&=1, \qquad f^{(|\rho|-2)}_{\rho}=-(|\rho|-2+m_1(\rho)), \\
f^{(|\rho|-3)}_{\rho}&=\binom{|\rho|-2}{2}+
(|\rho|-2)m_1(\rho)+\binom{m_1(\rho)}{2}-m_2(\rho),\\
f^{(|\rho|-4)}_{\rho}&=-\binom{|\rho|-2}{3}-
\binom{|\rho|-2}{2}m_1(\rho)-(|\rho|-2)\binom{m_1(\rho)}{2}-\binom{m_1(\rho)}{3}\\
&\hspace{4cm}+m_1(\rho)m_2(\rho)+(|\rho|-2)m_2(\rho)-m_3(\rho).
\end{align*}

For a better display, we write the multiplicities of $\rho$ as a set of indeterminates $\mathbf{m}=(m_1,m_2,\ldots)$ linked by $\sum_{i\ge 1} im_i=|\rho|$. For $I=(i_0,i_1,i_2,\ldots)$ we denote
\[M_I(w,\mathbf{m})=(-1)^{|I|}\binom{w-2}{i_0}\prod_{u \ge 1}\binom{m_u}{i_u}.\]
Then we have for $k=|\rho|-1-r$,
\begin{equation}
f^{(k)}_{\rho}=\sum_{I\subset \mathbf{I}_r}M_I(|\rho|,\mathbf{m}).
\end{equation}

\subsection{Second method: expansion of $\phi_\rho$}

Inspired by the case $z=1$, we try to generalize this expansion of $\phi_\rho\arrowvert_{z=1}$ for arbitrary $z$. Namely for any partition $\rho$, we look for an expression of $\phi_\rho$ as 
\begin{equation*}
|\rho|!\, \phi_\rho(t)=\sum_{k=1}^{|\rho|-1}
(f^{(k)}_{\rho}(t)e^{kt}+\epsilon_\rho f^{(k)}_{\rho}(-t)e^{-kt}),
\end{equation*}
up to a constant term known by $\phi_\rho(0)=0$. Here $f_\rho^{(k)}$ is no longer a constant but a function of $t$.

Let $|\rho|=w$. By substitution in the definition of $\Phi_w$, we get
\[w! \Phi_w(t;X)= \sum_{k=1}^{w-1}\sum_{|\rho|=w}(f^{(k)}_{\rho}(t)e^{kt}+\epsilon_\rho f^{(k)}_{\rho}(-t)e^{-kt})\, \widehat{p}_\rho(X).\]
Applying (4.4), we obtain
\begin{multline*}
w!\frac{d}{dt}\,\frac{\partial}{\partial p_1}\Phi_w=\\
\sum_{k=1}^{w-1}\sum_{|\rho|=w}
\left(\Big(k f^{(k)}_{\rho}(t)+ \frac{d}{dt}f^{(k)}_{\rho}(t)\Big)e^{kt}
-\epsilon_\rho\Big(k f^{(k)}_{\rho}(-t)+ \frac{d}{dt}f^{(k)}_{\rho}(-t)\Big)e^{-kt}\right)\,\widehat{p}_{\rho\setminus (1)},
\end{multline*}
\begin{align*}
w!\mathbf{E}\,\Phi_w=&\sum_{k=1}^{w-1}\sum_{|\rho|=w}(f^{(k)}_{\rho}(t)e^{kt}+\epsilon_\rho f^{(k)}_{\rho}(-t)e^{-kt}) \,\sum_{r\ge 1}r(m_{r}(\rho)+1)\, \widehat{p}_{\rho \setminus (r+1)\cup (r)}\\
(w+1)!\mathbf{D} \,\Phi_{w+1}=&\sum_{k=1}^{w}\sum_{|\rho|=w+1}(f^{(k)}_{\rho}(t)e^{kt}+\epsilon_\rho f^{(k)}_{\rho}(-t)e^{-kt})\\& \Big(\sum_{r,s\ge 1} (r+s-1)(m_{r+s-1}(\rho)+1)\, \widehat{p}_{\rho \setminus (r,s)\cup (r+s-1)}\\
&+\sum_{r,s \ge 1} rs(m_r(\rho)+1)(m_s(\rho)+1+\delta_{rs})\,
\widehat{p}_{\rho \setminus (r+s+1)\cup (r,s)}\Big).
\end{align*}

Finally if we identify the coefficients of products $e^{kt}\,\widehat{p}_\rho$ on both sides of equations (9.1), we obtain for any $k=1,\ldots,w$,
\begin{equation}
k f^{(k)}_{\rho\cup (1)}+ \frac{d}{dt}f^{(k)}_{\rho\cup (1)}=\sum_{r \ge 1} rm_r(\rho) \,f^{(k)}_{\rho\setminus (r) \cup(r+1)},
\end{equation}
\begin{equation}
\begin{split}
\sum_{r \ge 1} rm_r(\rho) \,\Big(
k f^{(k)}_{\rho\setminus (r) \cup(r+1)}+ \frac{d}{dt}f^{(k)}_{\rho\setminus (r) \cup(r+1)}\Big)&=w(w+1)\Big(
(2-z)\,f^{(k)}_\rho+(1-z)m_1(\rho)\,f^{(k)}_{\rho\setminus (1)}\Big)\\
&+\sum_{r,s \ge 1} rsm_r(\rho)(m_s(\rho)-\delta_{rs})\, 
f^{(k)}_{\rho \setminus (r,s)\cup (r+s+1)}\\
&+\sum_{r,s \ge 1} (r+s-1)m_{r+s-1}(\rho)\,
f^{(k)}_{\rho \setminus (r+s-1)\cup (r,s)}.
\end{split}
\end{equation}
The coefficients $f^{(k)}_\rho$ are defined inductively as solutions of these equations. Indeed let us assume that $f^{(k)}_\rho$ is known for any $\rho$ with $|\rho|\le w$ and any $k\le w-1$. Equations (9.4)--(9.5) define an overdetermined linear system, the solutions of which are the coefficients $\{f^{(k)}_\rho, |\rho|=w+1\}$. 

Unfortunately a general solution of this linear system is as yet unknown. Empirically a (unique) solution does exist. Tables giving $\phi_\rho(t)$ for $|\rho|\le 14$ are available on a web page~\cite{W2}. First values are given below. 

\subsection{Examples}
 
We use the convention 
\[
e^{kt}\pm e^{-kt}=e^{kt}+\epsilon_\rho \,e^{-kt},\qquad
e^{kt}\mp e^{-kt}=e^{kt}-\epsilon_\rho \,e^{-kt}.
\]
We denote by $R_k(z)$ the polynomial in $z$ defined by $R_1(z)=1$ and
\[R_k(z)=\frac{1}{(k-1)!}\prod_{j=1}^{k-1} (k-jz).\]
For instance $R_2(z)=2-z$ and $R_3(z)=(3-z)(3-2z)/2$.
 
We list the first values of $\phi_{\rho}$. All formulas are given up to a constant term obtained by $\phi_{\rho}(0)=0$ (this constant term is 0 when $\epsilon_\rho=-1$). For $|\rho|=2$ we have
\[\phi_{\rho}(t)=\phi_{\rho}(t)\arrowvert_{z=1}= \frac{1}{2}(e^{t}\pm e^{-t}).\]
For $|\rho|=3$ we have
\[\phi_{\rho}(t)=(2-z)\,
\phi_{\rho}(t)\arrowvert_{z=1}= \frac{1}{6}R_2(z) \big(e^{2t}\pm e^{-2t}-(m_1+1)(e^{t}\pm e^{-t})\big).\]
For $|\rho|=4$ we have
\begin{align*}
4! \,\phi_{\rho}(t)&= R_3(z)(e^{3t}\pm e^{-3t})-R_2(z)(2-z)(m_1+2)(e^{2t}\pm e^{-2t})+(e^{t}\pm e^{-t})\\
&\times\left((2m_1+1)(z^2-\frac{5}{2}(z-1))
+\binom{m_1}{2}(z^2-\frac{11}{2}(z-1))
-m_2(z^2+\frac{1}{2}(z-1))\right).
\end{align*}
For $|\rho|=5$ we have
\begin{align*}
5! \,\phi_{\rho}(t)&= R_4(z)(e^{4t}\pm e^{-4t})-R_3(z)(2-z)(m_1+3)(e^{3t}\pm e^{-3t})
+R_2(z)(e^{2t}\pm e^{-2t})\\
&\times\left((3m_1+3)(z^2-\frac{28}{9}(z-1))
+\binom{m_1}{2}(z^2-\frac{16}{3}(z-1))
-m_2(z^2+\frac{4}{3}(z-1))\right)\\
&-(2-z)(e^{t}\pm e^{-t}) \left((3m_1+1)(z^2-\frac{7}{6}(z-1))
+3\binom{m_1}{2}(z^2-\frac{17}{6}(z-1))\right.\\
&\left.+\binom{m_1}{3}(z^2-\frac{19}{2}(z-1))
-(m_1m_2+3m_2-m_3)(z^2+\frac{1}{2}(z-1))\right).
\end{align*}

In the previous examples, all $f_{\rho}^{(k)}$ do not depend on $t$. But for $|\rho|=6$ a $t$-component appears at $k=1$. We have 
\begin{align*}
6! \,\phi_{\rho}(t)&= R_5(z)(e^{5t}\pm e^{-5t})-R_4(z)(2-z)(m_1+4)(e^{4t}\pm e^{-4t})
+R_3(z)(e^{3t}\pm e^{-3t})\\
&\times\left((4m_1+6)(z^2-\frac{27}{8}(z-1))
+\binom{m_1}{2}(z^2-\frac{21}{4}(z-1))
-m_2(z^2+\frac{9}{4}(z-1))\right)\\
&-R_2(z)(2-z)(e^{2t}\pm e^{-2t}) \left((6m_1+4)(z^2-2z+2)
+4\binom{m_1}{2}(z^2-\frac{11}{3}(z-1))\right.\\
&\left.+\binom{m_1}{3}(z^2-\frac{26}{3}(z-1))
-(m_1m_2+4m_2-m_3)(z^2+\frac{4}{3}(z-1))\right)\\
&+(e^{t}\pm e^{-t})\sum_{I\subset \mathbf{I}_4} 
M_I(6,\mathbf{m})\Big(z^4+a_Iz^2(z-1)+b_I(z-1)^2\Big)\\
&+t(e^{t}\mp e^{-t})\, (z^2-1)(2z-1)\sum_{I\subset \mathbf{I}_4} M_I(6,\mathbf{m})c_I,
\end{align*}
with $a_I$, $b_I$ and $c_I$ listed below.
\vspace{0.3 cm}\\
\footnotesize
\begin{tabular}{|c|c|c|c|}
\hline
$I$ & $a_I$ & $b_I$ & $c_I$ \\ \hline
(4,0,0,0,0) (3,1,0,0,0) &-7/2 & 7/4& 0\\ \hline
(0,4,0,0,0) &-245/12 & 2035/24 & 5/4 \\ \hline
(1,3,0,0,0) &-91/16 & 591/32 & -5/8 \\ \hline
(2,2,0,0,0) &-47/6 & 37/12 & 5/24 \\ \hline
(2,0,1,0,0) & 0 &0 & -5/24 \\ \hline
\end{tabular}
\begin{tabular}{|c|c|c|c|}
\hline
$I$ & $a_I$ & $b_I$ & $c_I$ \\ \hline
(0,2,1,0,0) (0,0,0,0,1)& -33/4 &-77/8 &-5/4\\ \hline
(1,1,1,0,0) & -263/48 & -197/96 & 5/8 \\ \hline
(0,0,2,0,0) & 115/12 & 235/24 & 5/4 \\ \hline
(0,1,0,1,0) & -5/12 & 115/24 & 5/4 \\ \hline
(1,0,0,1,0) & -11/16 &-49/32 & -5/8 \\ \hline
\end{tabular}
\normalsize
\vspace{0.3 cm}\\
Similarly a $t$-component appears for $|\rho|=7$ at $k=1$. We have
\begin{align*}
7! \,\phi_{\rho}(t)&= R_6(z)(e^{6t}\pm e^{-6t})-R_5(z)(2-z)(m_1+5)(e^{5t}\pm e^{-5t})
+R_4(z)(e^{4t}\pm e^{-4t})\\
&\times\left(5(m_1+2)(z^2-\frac{88}{25}(z-1))
+\binom{m_1}{2}(z^2-\frac{26}{5}(z-1))
-m_2(z^2+\frac{16}{5}(z-1))\right)\\
&-R_3(z)(2-z)(e^{3t}\pm e^{-3t}) \left(10(m_1+1)(z^2-\frac{99}{40}(z-1))
+5\binom{m_1}{2}(z^2-\frac{81}{20}(z-1))\right.\\
&\left.+\binom{m_1}{3}(z^2-\frac{33}{4}(z-1))
-(m_1m_2+5m_2-m_3)(z^2+\frac{9}{4}(z-1))\right)\\
&+R_2(z)(e^{2t}\pm e^{-2t})\sum_{I\subset \mathbf{I}_4} 
M_I(7,\mathbf{m})\Big(z^4+A_Iz^2(z-1)+B_I(z-1)^2\Big)\\
&+(e^{t}\pm e^{-t})(2-z)\sum_{I\subset \mathbf{I}_5} 
M_I(7,\mathbf{m})\Big(z^4+a_Iz^2(z-1)+b_I(z-1)^2\Big)\\
&+t(e^{t}\mp e^{-t})\, (2-z)(z^2-1)(2z-1)\sum_{I\subset \mathbf{I}_5} M_I(7,\mathbf{m})c_I,
\end{align*}
with $A_I$, $B_I$, $a_I$, $b_I$ and $c_I$ listed below.
\vspace{0.3 cm}\\
\footnotesize
\begin{tabular}{|c|c|c|}
\hline
$I$ & $A_I$ & $B_I$ \\ \hline
(4,0,0,0,0) (3,1,0,0,0) &-47/10 &22/5\\ \hline
(0,4,0,0,0) &-47/3 & 212/3\\ \hline
(1,3,0,0,0) &-172/15&304/15 \\ \hline
(2,2,0,0,0) &-88/15& 136/15 \\ \hline
(2,0,1,0,0) &-71/30& -74/15\\ \hline
\end{tabular}
\begin{tabular}{|c|c|c|}
\hline
$I$ & $A_I$ & $B_I$  \\ \hline
(0,2,1,0,0) (0,0,0,0,1)&-26/3&-40/3\\ \hline
(1,1,1,0,0)&-4/15 & -32/15  \\ \hline
(0,0,2,0,0)&37/3& 44/3\\ \hline
(0,1,0,1,0)&11/6& 2/3\\ \hline
(1,0,0,1,0)&-67/15 &-116/15 \\ \hline
\end{tabular}
\vspace{0.3 cm}\\
\begin{tabular}{|c|c|c|c|}
\hline
$I$ & $a_I$\rule{0cm}{0.4cm} & $b_I$ & $c_I$ \\ \hline
(5,0,0,0,0,0) (4,1,0,0,0,0)&-19/10 & 11/20 &0\\ \hline
(0,5,0,0,0,0)& 4371/10 & 6271/20 & 77/4\\ \hline
(1,4,0,0,0,0)& -4167/50 & 3573/100 & -119/20\\ \hline
(3,2,0,0,0,0)& 0 & 0 & 7/40\\ \hline
(2,3,0,0,0,0)& 0 & 0 & 7/8\\ \hline
(3,0,1,0,0,0)& -9/2 & 3/4 & -7/40\\ \hline
(0,3,1,0,0,0)& -1281/20 & -3521/40 &-49/4 \\ \hline
(2,1,1,0,0,0)& 0 & 0 & -7/40 \\\hline
(1,2,1,0,0,0)& 0 & 0 &63/20\\ \hline
\end{tabular}
\begin{tabular}{|c|c|c|c|}
\hline
$I$ & $a_I$\rule{0cm}{0.4cm} & $b_I$ & $c_I$ \\ \hline
(1,0,2,0,0,0)& 0 & 0 &-7/20\\ \hline
(0,1,2,0,0,0)& -163/10 & 717/20 &21/4 \\ \hline
(0,0,1,1,0,0)& -257/40 & -1137/80 & -7/4\\ \hline
(2,0,0,1,0,0)& -163/20 & 17/40 & -7/40\\ \hline
(0,2,0,1,0,0)& -3623/40 & 5417/80 &35/4 \\ \hline
(1,1,0,1,0,0)& 1249/100 & -1331/200 &-7/4 \\ \hline
(1,0,0,0,1,0)& -1363/100 & 697/200 &7/20 \\ \hline
(0,1,0,0,1,0)& 1849/20 &-2091/40 &-21/4 \\ \hline
(0,0,0,0,0,1)& -593/20 & 907/40 &7/4 \\ \hline
\end{tabular}
\normalsize
\vspace{0.3 cm}\\

\subsection{Constant terms}

Let $|\rho|=w$. In the previous examples, it appears empirically that the solutions of the linear system (9.4)--(9.5) take the form
\[f^{(k)}_{\rho}=R_k(z)\,\sum_{I\subset \mathbf{I}_r}M_I(w,\mathbf{m})\,f_I(z,w;t),\]
with $k=w-1-r$ and $f_I$ a function of $z$, $w$ and $t$. Of course for $z=1$ since we have $R_k(1)=1$, this is in accordance with (9.3) and we have $f_I(1,w;t)=1$.

It also appears empirically that for $w/2-2< k\le w-1$, the coefficient $f^{(k)}_{\rho}$ does not depend on $t$. It may be written as
\begin{equation}
f^{(k)}_{\rho}=R_k(z)\,\sum_{I\subset \mathbf{I}_r}M_I(w,\mathbf{m})\,f_I(z,w).
\end{equation}
Since $k=w-1-r$ such cases correspond to $0\le r <w/2 +1$.

In this situation the linear system (9.4)--(9.4) takes the form 
\begin{equation}
k f^{(k)}_{\rho\cup (1)}=\sum_{u \ge 1} um_u(\rho) \,f^{(k)}_{\rho\setminus (u) \cup(u+1)},
\end{equation}
\begin{equation}
\begin{split}
k^2 f^{(k)}_{\rho\cup (1)}&=w(w+1)\Big(
(2-z)\,f^{(k)}_\rho+(1-z)m_1(\rho)\,f^{(k)}_{\rho\setminus (1)}\Big)\\
&+\sum_{u,v \ge 1} uvm_u(\rho)(m_v(\rho)-\delta_{uv})\, 
f^{(k)}_{\rho \setminus (u,v)\cup (u+v+1)}\\
&+\sum_{u,v \ge 1} (u+v-1)m_{u+v-1}(\rho)\,
f^{(k)}_{\rho \setminus (u+v-1)\cup (u,v)}.
\end{split}
\end{equation}
We may substitute the expression (9.6) in these equations and identify the coefficients of $M_I(w+1,\mathbf{m})$ on both sides. Doing so, we obtain a linear system between the $f_I, I\subset \mathbf{I}_r$.

Let us make this method explicit for (9.7). By substitution of (9.6) it becomes
\begin{align*}
&(w-r)\sum_{I\subset \mathbf{I}_r}(-1)^{|I|}\binom{w-1}{i_0}\left(\binom{m_1}{i_1}+\binom{m_1}{i_1-1}\right)\prod_{a \ge 2}\binom{m_a}{i_a}\,f_I(z,w+1)=\\
&\sum_{u \le r} um_u \sum_{I\subset \mathbf{I}_r}(-1)^{|I|}\binom{w-1}{i_0}\binom{m_u-1}{i_u}\left(\binom{m_{u+1}}{i_{u+1}}+\binom{m_{u+1}}{i_{u+1}-1}\right)\prod_{a \neq u,u+1}\binom{m_a}{i_a}f_I(z,w+1)\\
&+(w-\sum_{u \le r}um_u) \sum_{I\subset \mathbf{I}_r}(-1)^{|I|}\binom{w-1}{i_0}\,\prod_{a \ge1}\binom{m_a}{i_a}\,f_I(z,w+1).
\end{align*}
The right-hand side may be transformed to
\begin{align*}
&\sum_{u \le r} um_u \sum_{I\subset \mathbf{I}_r}(-1)^{|I|}\binom{w-1}{i_0}\,\prod_{a \neq u,u+1}\binom{m_a}{i_a}\binom{m_u-1}{i_u}\binom{m_{u+1}}{i_{u+1}-1}\,f_I(z,w+1)\\
&-\sum_{u \le r} um_u \sum_{I\subset \mathbf{I}_r}(-1)^{|I|}\binom{w-1}{i_0}\,\prod_{a \neq u,u+1}\binom{m_a}{i_a}\binom{m_u-1}{i_u-1}\binom{m_{u+1}}{i_{u+1}}\,f_I(z,w+1)\\
&+w\sum_{I\subset \mathbf{I}_r}(-1)^{|I|}\binom{w-1}{i_0}\,\prod_{a \ge1}\binom{m_a}{i_a}\,f_I(z,w+1).
\end{align*}
Equivalently
\begin{align*}
&\sum_{I\subset \mathbf{I}_r}\sum_{1 \le u \le r} u(i_u+1) (-1)^{|I|}\binom{w-1}{i_0}\,\prod_{a \neq u,u+1}\binom{m_a}{i_a}\binom{m_u}{i_u+1}\binom{m_{u+1}}{i_{u+1}-1}\,f_I(z,w+1)\\
&-\sum_{I\subset \mathbf{I}_r}\sum_{1 \le u \le r} ui_u (-1)^{|I|}\binom{w-1}{i_0}\,\prod_{a \neq u,u+1}\binom{m_a}{i_a}\binom{m_u}{i_u}\binom{m_{u+1}}{i_{u+1}}\,f_I(z,w+1)\\
&+w\sum_{I\subset \mathbf{I}_r}(-1)^{|I|}\binom{w-1}{i_0}\,\prod_{a \ge1}\binom{m_a}{i_a}\,f_I(z,w+1).
\end{align*}
Since $\sum_{1\le u \le r} ui_u=r-i_0$ we may simplify both sides. Doing so, (9.7) is equivalent with
\begin{align*}
&(w-r)\sum_{I\subset \mathbf{I}_r}(-1)^{|I|}\binom{w-1}{i_0}\binom{m_1}{i_1-1}\prod_{a \ge 2}\binom{m_a}{i_a}\,f_I(z,w+1)=\\
&i_0\,\sum_{I\subset \mathbf{I}_r}(-1)^{|I|}\binom{w-1}{i_0}\,\prod_{a \ge1}\binom{m_a}{i_a}\,f_I(z,w+1)\\
&+\sum_{I\subset \mathbf{I}_r}\sum_{1 \le u \le r} u(i_u+1) (-1)^{|I|}\binom{w-1}{i_0}\,\prod_{a \neq u,u+1}\binom{m_a}{i_a}\binom{m_u}{i_u+1}\binom{m_{u+1}}{i_{u+1}-1}\,f_I(z,w+1).
\end{align*}
Finally by identification of coefficients in $\binom{m_1}{i_1}\ldots\binom{m_r}{i_r}$ on both sides, we get the linear relation
\begin{equation}
(w-r)f_{I\cup(1)}(z,w+1)=(w-i_0-1)f_{I\cup(0)}(z,w+1)
-\sum_{1 \le u \le r-1} ui_u\,f_{I\setminus(u)\cup(u+1)}(z,w+1),
\end{equation}
valid for any $I=(i_0,i_1,\ldots,i_{r-1})\subset \mathbf{I}_{r-1}$, where we denote $I\setminus(u)\cup(v)=(i_0,\ldots,i_u-1,\ldots,i_v+1,\ldots)$. 

In particular for $I=(r-1,0,\ldots,0)$ we get
\[f_{(r-1,1,0,\ldots,0)}(z,w)=f_{(r,0,\ldots,0)}(z,w).\]
And for $I=(r-2,1,0,\ldots,0)$ we have
\[(w-r-1)f_{(r-2,2,0,\ldots,0)}(z,w)=(w-r)f_{(r-1,1,0,\ldots,0)}(z,w)
-f_{(r-2,0,1,0,\ldots,0)}(z,w).\]

A similar, but much more involved, transformation may be performed with (9.8). It yields
\begin{equation}
\begin{split}
(w-r)^2f_{I\cup(1)}(z,w+1)&=-w(w+1)
(2-z)\frac{w-i_0-1}{w-1}f_I(z,w)\\
&+w(w+1)(1-z)i_1\,\frac{(w-i_0-1)(w-i_0-2)}{(w-1)(w-2)}f_{I\setminus (1)}(z,w-1)\\
&+(2w-2r+3)(w-i_0-1)f_{I\cup(0)}(z,w+1)\\
&+\sum_{1\le u,v \le r-1} uvi_u(i_v-\delta_{uv})\, 
f_{I \setminus (u,v)\cup (u+v+1)}(z,w+1)\\
&+\sum_{1 \le u,v \le r-1} (u+v-1)i_{u+v-1}\,
f_{I \setminus (u+v-1)\cup (u,v)}(z,w+1),
\end{split}
\end{equation}
valid for any $I=(i_0,i_1,\ldots,i_{r-1})\subset \mathbf{I}_{r-1}$, where we denote $I\setminus(u,u)\cup(v,v)=(i_0,\ldots,i_u-2,\ldots,i_v+2,\ldots)$. 

Equations (9.9) and (9.10), written for any $I=(i_0,i_1,\ldots,i_{r-1})\subset \mathbf{I}_{r-1}$, form an overdetermined linear system between the $f_I, I\subset \mathbf{I}_r$. Empirically a (unique) solution does exist.

The values for $r\le 4$ are as follows. For $r=0$ we have 
$f_{(0)}(z,w)=1$. For $r=1$ we get $f_{(1,0)}(z,w)=f_{(0,1)}(z,w)=(2-z)$.

For $r=2$ we obtain $f_I(z,w)=z^2+a_I(w)(z-1)$ with $a_I(w)$ given by
\[a_{(2,0,0)}(w)=a_{(1,1,0)}(w)=-2\frac{(w-3)(2w-3)}{(w-2)^2},\]
\[a_{(0,2,0)}(w)=-\frac{5w-9}{w-2}, \qquad a_{(0,0,1)}(w)=\frac{(w-3)^2}{w-2}.\]
For $r=3$ we have $f_I(z,w)=(2-z)\big(z^2+a_I(w)(z-1)\big)$ with $a_I(w)$ given by
\begin{align*}
a_{(3,0,0,0)}(w)&=a_{(2,1,0,0)}(w)=-2\frac{(w-4)^2(2w-3)}{(w-2)
(w-3)^2},\\
a_{(1,2,0,0)}(w)&=-\frac{(w-4)(5w-8)}{(w-2)(w-3)}, \qquad
a_{(0,3,0,0)}(w)=-\frac{7w-16}{w-3},\\
a_{(0,1,1,0)}(w)&=a_{(1,0,1,0)}(w)=a_{(0,0,0,1)}(w)=
\frac{(w-4)^2}{w-3}.
\end{align*}

Unfortunately as soon as $r\ge 4$, the expression of $f_I(z)$ becomes very messy. For $r=4$ we have 
\[f_I(z,w)=z^4+\frac{a_I(w)}{c_I(w)}z^2(z-1)+\frac{b_I(w)}{c_I(w)}(z-1)^2,\] 
where for $w\ge7$, $a_I(w)$, $b_I(w)$ and $c_I(w)$ are given by the following table.
\footnotesize
\vspace{0.3 cm}\\
\begin{tabular}{|c|c|c|}
\hline
$I$ & $a_I$ & $b_I$ \\ \hline
(4,0,0,0,0) & & \\
(3,1,0,0,0) & $-4(w-5)(2w-5)(w^3-10w^2+32w-30)$
& $4(w-5)^3(2w-5)(2w-3)$  \\ \hline
(0,4,0,0,0) & $-2(7w^3-84w^2+312w-375)$
& $43w^3-485w^2+1677w-1875$ \\ \hline
(1,3,0,0,0) & $-(w-5)(11w^3-111w^2+330w-300)$ &
$2(w-5)(14w^3-145w^2+426w-375)$ \\ \hline
(2,2,0,0,0) & $-9w^5+171w^4-1249w^3+4361w^2-7214w+4500$
& $2(w-5)^2(10w^3-99w^2+274w-225)$ \\ \hline
(2,0,1,0,0) & $(w-5)(w^5-23w^4+193w^3-745w^2+1334w-900)$
& $-2(w-5)^3(2w^3-19w^2+52w-45)$ \\ \hline
(0,2,1,0,0) & &  \\
(0,0,0,0,1) & $(w-5)(w^2-16w+50)$ & $-5(w-5)^3$ \\ \hline
(1,1,1,0,0) & $w^4-22w^3+167w^2-506w+500$ &
$-2(w-5)^2(2w^2-17w+25)$ \\ \hline
(0,0,2,0,0) & $2(w-5)(w^3-12w^2+51w-75)$
& $(w-5)^3(w^2-6w+15)$  \\ \hline
(0,1,0,1,0) & $w^4-23w^3+188w^2-636w+750$
& $-(w-5)^2(4w^2-39w+75)$  \\ \hline
(1,0,0,1,0) & $(w-5)(w^4-20w^3+131w^2-342w+300)$
& $-2(w-5)^3(2w^2-12w+15)$ \\ \hline
\end{tabular}
\vspace{0.3 cm}\\
\begin{tabular}{|c|c|}
\hline
$I$ & $c_I$  \\ \hline
(4,0,0,0,0) (3,1,0,0,0) & $(w-2)(w-3)^2(w-4)^2$ \\ \hline
(0,4,0,0,0) & $(w-3)(w-4)(w-6)$ \\ \hline
(1,3,0,0,0) & $(w-2)(w-3)(w-4)(w-6)$ \\ \hline
(2,2,0,0,0) (2,0,1,0,0) & $(w-2)(w-3)^2(w-4)(w-6)$ \\ \hline
\end{tabular}
\begin{tabular}{|c|c|}
\hline
$I$ & $c_I$  \\ \hline
(0,2,1,0,0) (0,0,0,0,1) & $(w-4)(w-6)$\\ \hline
(1,1,1,0,0) & $(w-2)(w-4)(w-6)$ \\ \hline
(0,0,2,0,0) (0,1,0,1,0) & $(w-3)(w-4)(w-6)$ \\ \hline
(1,0,0,1,0) & $(w-2)(w-3)(w-4)(w-6)$ \\ \hline
\end{tabular}
\normalsize
\vspace{0.3 cm}\\

We have also computed the case $r=5$ and $r=6$ but these tables cannot be given here. They are available upon request. We recall that for $1 \le k\le w/2-2$ the coefficients $f^{(k)}_{\rho}$ and $f_I$ depend on $t$ and are not yet explicitly known.

\subsection{Complete functions}

An important application is obtained by specializing $z=0$. Since $P_k(0)=h_k$, this particular case corresponds to complete functions. Then $\phi_\rho(t)$ is the generating function of the coefficients $c_{\rho}^{(k)}$ of Theorem 7.1.

Denoting $|\rho|=w$ and
$R_k=k^{k}/k!$, the first terms of the expansion of $\phi_\rho(t)$ are
\begin{align*}
w! \,\phi_{\rho}(t)&= R_{w-1}(e^{(w-1)t}\pm e^{-(w-1)t})
-2R_{w-2}(m_1+w-2)(e^{(w-2)t}\pm e^{-(w-2)t})\\
&+\frac{R_{w-3}}{w-2}(e^{(w-3)t}\pm e^{-(w-3)t})
\Big((2m_1+w-3)(w-3)(2w-3)+(5w-9)\binom{m_1}{2}
\\&+(w-3)^2m_2\Big) -2\frac{R_{w-4}}{w-3}(e^{(w-4)t}\pm e^{-(w-4)t}) \Big(\frac{1}{3}(3m_1+w-4)(w-4)^2(2w-3)\\
&+(w-4)(5w-8)\binom{m_1}{2}
+(7w-16)\binom{m_1}{3}
+(w-4)^2(m_1m_2+(w-2)m_2-m_3)\Big)\\
&+R_{w-5}(e^{(w-5)t}\pm e^{-(w-5)t})\sum_{I\subset \mathbf{I}_4}
M_I(w,\mathbf{m})\frac{b_I(w)}{c_I(w)} + \mathrm{etc}\ldots.
\end{align*}
Here $w\ge 7$ is implicitly assumed. The coefficients $b_I(w)$, $c_I(w)$ are listed in the tables of Section 9.5.

For $w/2-2 < k\le w-1$, the coefficient of $e^{kt}\pm e^{-kt}$ does not depend on $t$. However for $1 \le k\le w/2-2$ it does depend on $t$ and is not yet explicitly known.

\section{Other symmetric functions}

Our method may be used for more symmetric functions than those presented above. In particular the results for the Hall-Littlewood symmetric functions may be immediately translated to three families: the Schur functions associated with hooks $s_{(a,1^b)}$, the partial sums $p_{a,b}=\sum_{|\la|=a,l(\la)=b}m_\la$, and the products $h_ae_b=s_{(a,1^{b})}+s_{(a+1,1^{b-1})}$.

This is a consequence of the fact that the Hall-Littlewood symmetric function $P_{k}(z)$ is a generating function for such families. More precisely we have~\cite[Example 3.2.3, p. 214; Example 1.3.9, p. 47]{Ma},~\cite[p. 240]{La3}
\[
P_{k}(z)=\sum_{r=0}^{k-1} (-z)^r s_{(k-r,1^r)}=\sum_{l=1}^k (1-z)^{l-1} p_{k,l}=(1-z)^{-1} \sum_{r=0}^k (-z)^{r}h_{k-r}e_r.
\]

Of course our method may be applied directly to $s_{(a,1^b)}$, $h_ae_b$, or $p_{a,b}$ without using the Hall-Littlewood polynomial. Then the recurrence for the class expansion coefficients depends on two parameters $a,b$. Here we only give our equations (4.14), with $u_i=\la_i-i+1$. For the product $h_ae_b$ they are
\begin{align*}
\sum_{i=1}^{l(\la)+1} 
c_i(\la) \,(h_{a}e_b)(A_{\la^{(i)}})&=(h_{a}e_b)(A_{\la})+\sum_{i=1}^{l(\la)+1} 
c_i(\la) \,u_i\,(h_{a-1}e_b)(A_{\la^{(i)}}),\\
\sum_{i=1}^{l(\la)+1} 
c_i(\la) \,u_i\,(h_{a}e_b)(A_{\la^{(i)}})&=n(h_{a}e_{b-1})(A_{\la})+\sum_{i=1}^{l(\la)+1} 
c_i(\la) \,u_i^2\,(h_{a-1}e_b)(A_{\la^{(i)}}).
\end{align*}
For the Schur functions $s_{(a,1^b)}$ they are
\begin{align*}
\sum_{i=1}^{l(\la)+1} 
c_i(\la) \,s_{(a,1^b)}(A_{\la^{(i)}})&=s_{(a,1^b)}(A_{\la})+\sum_{i=1}^{l(\la)+1} c_i(\la) \,u_i\,s_{(a-1,1^b)}(A_{\la^{(i)}}),\\
\sum_{i=1}^{l(\la)+1} 
c_i(\la) \,u_i\,s_{(a,1^b)}(A_{\la^{(i)}})&=ns_{(a,1^{b-1})}(A_{\la})+\sum_{i=1}^{l(\la)+1} c_i(\la) \,u_i^2\,s_{(a-1,1^b)}(A_{\la^{(i)}}).
\end{align*}
And for the partial sums $p_{a,b}$ they are
\begin{align*}
\sum_{i=1}^{l(\la)+1} 
c_i(\la) \,p_{a,b}(A_{\la^{(i)}})&=p_{a,b}(A_{\la})+\sum_{i=1}^{l(\la)+1} c_i(\la) \,u_i\,p_{a-1,b}(A_{\la^{(i)}}),\\
\sum_{i=1}^{l(\la)+1} 
c_i(\la) \,u_i\,p_{a,b}(A_{\la^{(i)}})&=n(p_{a-1,b-1}-p_{a-1,b})(A_{\la})+\sum_{i=1}^{l(\la)+1} c_i(\la) \,u_i^2\,p_{a-1,b}(A_{\la^{(i)}}).
\end{align*}
We leave other steps, and the proof of the following result, to the reader (see also Proposition 8.4).
\begin{theo}
We have the class expansion
\[h_re_s(J_1,\ldots,J_n)=\sum_{\rho}c^{(r,s)}_\rho \, \binom{n-|\overline{\rho}|}{m_1(\rho)} \, C_{\tilde{\rho}},\]
where the coefficients $c^{(r,s)}_\rho$ are determined by the recurrence relations
\begin{equation*}
c^{(r,s)}_{\rho\cup (1)}=\sum_{u \ge 1} um_u(\rho) \,c^{(r-1,s)}_{\rho\setminus (u) \cup(u+1)},
\end{equation*}
\begin{equation*}
\begin{split}
\sum_{u \ge 1} um_u(\rho) \,c^{(r,s)}_{\rho\setminus (u) \cup(u+1)}&=
|\rho|\,(2c^{(r-1,s)}_\rho+c^{(r,s-1)}_\rho)+m_1(\rho)\,(c^{(r-1,s)}_{\rho\setminus (1)}+c^{(r,s-1)}_{\rho\setminus (1)})\\
&+\sum_{u,v \ge 1} uvm_u(\rho)(m_v(\rho)-\delta_{uv})\, 
c_{\rho \setminus (u,v)\cup (u+v+1)}^{(r-1,s)}\\
&+\sum_{u,v \ge 1} (u+v-1)m_{u+v-1}(\rho)\,
c_{\rho \setminus (u+v-1)\cup (u,v)}^{(r-1,s)}.
\end{split}
\end{equation*}
The leading terms
are obtained for $|\rho|-l(\rho)=r+s$ and $m_1(\rho)=0$. Their coefficients are
\[c_{\rho}^{(r,s)}=\sum_{
\begin{subarray}{c}0\le s_i\le \rho_i-1\\s_1+\cdots+s_{l(\rho)}=s\end{subarray}} 
\prod_{i= 1}^{l(\rho)}\frac{1}{\rho_i}\binom{\rho_i}{s_i}\binom{2\rho_i-2-s_i}{\rho_i-1}.\]
\end{theo}
The leading coefficients can be quickly obtained as a consequence of Proposition 8.3 and the following identity~\cite[equ. (4)]{La5}
\begin{equation*}
z\,C_r(z)=\frac{1}{r+1}\sum_{m=0}^r  
(z-1)^{m}\binom{r+1}{m}\binom{2r-m}{r}.
\end{equation*}
\noindent\textit{Examples:}  For $r=0$ we have $s_i=\rho_i-1$ for any $i$ and we recover Theorem 5.1. For $r=1$ we have $s_i=\rho_i-1$ for any $i$ but one, equal to $\rho_i-2$, and we recover the leading coefficient $a_{\rho}=\sum_i \binom{\rho_i}{2}$ given in Proposition 5.2. For $s=0$, all $s_i$'s are zero and we recover Proposition 7.3. For $s=1$, all $s_i$'s are zero, but one equal to 1, and the leading coefficient of $h_re_1$ is
$\sum_i \binom{2\rho_i-3}{\rho_i-1}\prod_{j\neq i}C_{\rho_j-1}$.

Unfortunately our method is not efficient with the one-row Macdonald symmetric function, nor with the products $e_\mu$, $p_\mu$, $h_\mu$. With the latter, two difficulties are quickly encountered. Firstly the computations become very messy. Secondly one needs to extend the results of Theorem 4.1 in order to express 
\[\sum_{i}
c_i(\la) \, (\la_i-i+1)^k\,\ta^{\la^{(i)}}_\mu \quad \textrm{for} \: k \ge 3.\] 
However for $l\le 3$ the products $p_kp_l$ and the monomial symmetric functions $m_{(k,l)}=p_kp_l-p_{k+l}$ may be handled without any new ingredient. Actually for $p_kp_l$ we have\\
\begin{align*}
\sum_{i=1}^{l(\la)+1} 
c_i(\la) \,(p_kp_l)(A_{\la^{(i)}})&=(p_kp_l+p_kf_l-p_{k-1}f_{l+1})(A_{\la})+\sum_{i=1}^{l(\la)+1} c_i(\la) \,u_i\,(p_{k-1}p_l)(A_{\la^{(i)}}),\\
\sum_{i=1}^{l(\la)+1} 
c_i(\la) \,u_i\,(p_kp_l)(A_{\la^{(i)}})&=(p_kf_{l+1}-p_{k-1}(f_{l+2}+np_l))(A_{\la})+\sum_{i=1}^{l(\la)+1} c_i(\la) \,u_i^2\,(p_{k-1}p_l)(A_{\la^{(i)}}),
\end{align*}
with $f_i$ defined by (3.4). Therefore using (3.6), a recurrence may be defined provided $f_l,f_{l+1},f_{l+2}$ do not involve products $p_ap_b$, i.e. for $l\le3$.

\section{Extension to Jack polynomials}

Our method has a very natural extension in the framework of Jack polynomials. This generalization will be developed elsewhere. Here we only present some results (omitting the proofs).

Let $\al$ be some positive real parameter and $\be=\al-1$. The family of Jack polynomials $J_{\la}(\alpha)$, indexed by partitions, forms a basis of the algebra of symmetric functions with rational coefficients in $\al$~\cite{Ma,S0}. We consider the transition matrix between this basis and the classical basis of power sums $p_{\mu}$, i.e. we write
\[J_{\la}(\alpha)=\sum_{|\mu|= |\la|} \theta^{\la}_{\mu}(\al) \,p_{\mu}.\]

As a consequence of the Frobenius formula (see the argument in the introduction of~\cite{La4}), the quantities $\theta^{\la}_{\mu}(\al)$ generalize the central characters, i.e. we have
$\theta^{\la}_{\mu}(1)=\theta^\la_\mu=n!\,z_\mu^{-1} \hat{\chi}^\la_\mu$. 

Given a partition $\la$, the $\al$-content of any node $(i,j) \in \la$ is defined as $j-1-(i-1)/\al$. We denote by $A_\la^{(\al)}=\left\{j-1-(i-1)/\al,\, (i,j) \in \la \right\}$ the finite alphabet of the $\al$-contents of $\la$.

Denote by $\mathbf{Q}[\alpha]$ the field of rational functions in $\al$. A polynomial in $r$ indeterminates $\la=(\la_1,\ldots,\la_r)$ with coefficients in $\mathbf{Q}[\alpha]$ is said to be ``shifted symmetric'' in $\la$ if it is symmetric in the $r$ ``shifted variables'' $\la_i-i/\alpha$. In analogy with symmetric functions, this defines $\mathbf{S}^{\ast}(\al)$, the algebra of shifted symmetric functions with coefficients in $\mathbf{Q}[\alpha]$. We refer to~\cite{OO,Ok1,Ok2}, or to~\cite{La1,La4} for a short survey.

It is known~\cite[Proposition 2]{La4} that the quantities $\theta^{\la}_{\mu}(\al)$ are shifted symmetric functions of $\la$, and form a basis of $\mathbf{S}^{\ast}(\al)$.  Moreover~\cite[Lemma 7.1]{La1}, given a symmetric function $f$, its $\al$-content evaluation $f(A_\la^{(\al)})$ is also a shifted symmetric function of $\la$. The argument is similar to the one already given in Section 2.4.

It is therefore a natural problem to consider the expansion
\[f(A_\la^{(\al)})=\sum_{|\mu|=n}  a_{\mu}(n) \, \ta^\la_\mu(\al),\]
with $|\la|=n$, and to study the properties of the coefficients $a_{\mu}(n)$.

The simplest result of this type is the following generalization of Jucys' classical result
\[\al^k e_k(A_\la^{(\al)})=\sum_{\begin{subarray}{c}|\mu|=n\\
l(\mu)=n-k \end{subarray}}  \ta^\la_\mu(\al).\]
This expansion was first obtained in~\cite[Theorem 5.4]{La6}, as a consequence of the ``Cauchy formula'' for Jack polynomials (see also~\cite[Prop. 8.3]{Mat}). Another proof may be obtained by generalizing the argument in Section 5, which also yields the expansion
\[\al^{k+1} (e_1e_k)(A_\la^{(\al)})=\sum_{\begin{subarray}{c}|\mu|=n\\
l(\mu)=n-k-1 \end{subarray}}  a_\mu \ta^\la_\mu(\al)+
\be \sum_{\begin{subarray}{c}|\mu|=n\\
l(\mu)=n-k \end{subarray}}  a_\mu \ta^\la_\mu(\al)
+\al \sum_{\begin{subarray}{c}|\mu|=n\\
l(\mu)=n-k+1 \end{subarray}} \left(\binom{n}{2}-a_\mu\right) \ta^\la_\mu(\al)\]
with $a_\mu$ defined in Proposition 5.2.

A second important case is the extension of Lascoux-Thibon's result, i.e. the expansion
\[\al^k p_k(A_\la^{(\al)})=\sum_{|\mu|=n}  a^{(k)}_{\mu}(n) \, \ta^\la_\mu(\al).\]
A generalization of the method in Section 6 provides the following result. 
\begin{theo}
In the previous expansion, the coefficients $a^{(k)}_{\mu}(n)$ are polynomials in $n$, written as
\[a^{(k)}_{\mu}(n)=\sum_{\overline{\rho}=\overline{\mu}}c^{(k)}_\rho \, \binom{n-|\overline{\rho}|}{m_1(\rho)}.\]
Here the quantities $c^{(k)}_\rho$ are polynomials in $(\al,\be)$ with nonnegative integer coefficients, determined by the recurrence relations
\begin{align*}
c^{(k)}_{\rho\cup (1)}&=\al \sum_{r \ge 1} rm_r(\rho) \,c^{(k-1)}_{\rho\setminus (r) \cup(r+1)},\\
\sum_{r \ge 1} rm_r(\rho) \,c^{(k)}_{\rho\setminus (r) \cup(r+1)}&=
|\rho|\,c^{(k-1)}_\rho
+\al \sum_{r,s \ge 1} rsm_r(\rho)(m_s(\rho)-\delta_{rs})\, 
c_{\rho \setminus (r,s)\cup (r+s+1)}^{(k-1)}\\
&+\sum_{r,s \ge 1} (r+s-1)m_{r+s-1}(\rho)\,
c_{\rho \setminus (r+s-1)\cup (r,s)}^{(k-1)}+\be\sum_{r\ge 1}r^2 m_r(\rho) c_{\rho \setminus (r)\cup (r+1)}^{(k-1)}.
\end{align*}
\end{theo}
By induction on $k$ and the lowest part of $\rho$, the polynomials $c_{\rho}^{(k)}$ are non zero for $|\rho|+l(\rho)\le k+2$. Their generating function 
\[\phi_\rho(t)=\sum_{k\ge 0} c_{\rho}^{(k)} \, \frac{t^k}{k!}\]
can be determined. However the situation is much more intricate than for $\al=1$. In particular $\phi_\rho$ cannot be written in factorized form. 

The first values are given by
\begin{align*}
\phi_2(t)&=\frac{e^{\al t}-e^{-t}}{\al+1}, \qquad 
\phi_{1^2}(t)=\frac{e^{\al t}+\al e^{-t}}{\al+1}-1,\\
\phi_3(t)&=\frac{e^{2\al t}-e^{-t}}{(\al+1)(2\al+1)}
-\frac{e^{\al t}-e^{-2t}}{(\al+1)(\al+2)},\\
\phi_{21}(t)&=\frac{e^{2\al t}+2\al e^{-t}}{(\al+1)(2\al+1)}
-\frac{2e^{\al t}+\al e^{-2t}}{(\al+1)(\al+2)},\\
\phi_{1^3}(t)&=\frac{e^{2\al t}-4\al^2 e^{-t}}{(\al+1)(2\al+1)}
-\frac{4e^{\al t}-\al^2 e^{-2t}}{(\al+1)(\al+2)}+1.
\end{align*}

Generalizing the method of Section 8, we have similar results for the Hall-Littlewood symmetric function $P_k(z)$. Its $\al$-content evaluation may be written as 
\[\al^k P_k(A_\la^{(\al)};z)=\sum_{\rho}c^{(k)}_\rho \, \binom{n-|\overline{\rho}|}{m_1(\rho)} \,\ta^\la_{\tilde{\rho}}(\al),\]
where the coefficients $c^{(k)}_\rho$ are polynomials in $(\al,\be)$, which are nonzero for $|\rho|-l(\rho)\le k$. We list them below for $k\le 4$. The values for $h_k$ and $p_k$ are obtained for $z=0$ and $z=1$.
\footnotesize
\vspace{0.3 cm}\\
\begin{tabular}{|c|c|} \hline
$\rho$ & 2 \\ \hline
$c_{\rho}^{(1)}$ & 1 \\ \hline
\end{tabular}
\hspace{1cm}
\begin{tabular}{|c|c|c|c|c|} \hline
$\rho$ & 3 & $2^2$ & 2 & $1^2$ \\ \hline
$c_{\rho}^{(2)}$ & $2-z$ & $1-z$ & $\be$ & $\al$\\ \hline
\end{tabular}
\vspace{0.3cm}\\
\begin{tabular}{|c|c|c|c|c|c|c|c|c|c|} \hline
$\rho$ & 4 & 32 & $2^3$ & $2^2$ & 3 &$21^2$ & 21 & 2 & $1^2$\\ \hline
$c_{\rho}^{(3)}$ & $z^2-5z+5$ & $(1-z)(2-z)$ & $(1-z)^2$ & $2\be(1-z)$ & $3\be(2-z)$ & $\al(1-z)$ & $2\al(2-z)$ & $\al+\be^2$ & $\al\be$\\ \hline
\end{tabular}\\
\vspace{0.3cm}\\
\begin{tabular}{|c|c|c|c|c|c|c|}
\hline
$\rho$ & 5 & 42 & $3^2$ & $32^2$ & $2^4$  \\ \hline
$c_{\rho}^{(4)}$ & $(2-z)(z^2-7z+7)$ & $(1-z)(z^2-5z+5)$ & $(1-z)(2-z)^2$ & $(1-z)^2(2-z)$ & $(1-z)^3$   \\ \hline
\end{tabular}\\
\begin{tabular}{c|c|c|c|c|c|}
\rule{.55 cm}{0cm}& 4 & 32 & $2^3$  & $31^2$ & 31  \\ \cline{2-6}
& $\be(6z^2-29z+29)$ & $4\be(1-z)(2-z)$ & $3\be(1-z)^2$ & $\al(1-z)(2-z)$ & $3\al(z^2-5z+5)$\\ \cline{2-6}
\end{tabular}\\
\begin{tabular}{c|c|c|c|c|} 
\rule{.55 cm}{0cm}& 3 & $2^21^2$ & $2^21$ & $2^2$  \\ \cline{2-5}
& $(5\al+7\be^2)(2-z)$ & $\al(1-z)^2$ & $4\al(1-z)(2-z)$ & $4\al(z^2-5z+5)+3\be^2(1-z)$ \\ \cline{2-5}
\end{tabular}\\
\begin{tabular}{c|c|c|c|c|c|c|}
\rule{.55 cm}{0cm}& $21^2$ & 21 & 2 & $1^4$ & $1^3$ & $1^2$  \\ \cline{2-7}
& $2\al\be(1-z)$ & $6\al\be(2-z)$ & $2\al\be+\be^3$ & $3\al^2(1-z)$ & $4\al^2(2-z)$ & $\al^2+\al\be^2$ \\ \cline{2-7}
\end{tabular}
\vspace{0.3 cm}\\
\normalsize

However we emphasize that, given a symmetric function $f$, we are as yet unable to translate its $\al$-content expansion $f(A_\la^{(\al)})$ in terms of the specialization of $f$ at some generalized Jucys-Murphy elements. Actually, at this moment, we do not know how the symmetric algebra and the Jucys-Murphy elements might be generalized for $\al \neq 1$. 

The only known exception is for $\al=2$ and $\al=1/2$, where a deep interpretation has been recently found by Matsumoto~\cite{Mat} in terms of odd Jucys-Murphy elements $(J_1,J_3,\ldots, J_{2n-1})$ of $S_{2n}$.

\section{Appendix}

\begin{lem}
Let $z$ be an indeterminate. The quantities
\begin{equation}
a^{(k)}_{\mu}(n)=\sum_{\overline{\rho}=\overline{\mu}}c^{(k)}_\rho \, \binom{n-|\overline{\mu}|}{m_1(\rho)}
\end{equation}
satisfy the recurrence relations
\begin{equation}
a^{(k)}_{\mu\cup(1)}(n+1)=a^{(k)}_\mu(n)
+\sum _{r\ge 1}rm_r(\mu)\,a^{(k-1)}_{\mu \setminus (r)\cup (r+1)}(n+1),
\end{equation}
\begin{align}
\sum_{r\ge 1}rm_r(\mu)\,a^{(k)}_{\mu \setminus (r)\cup (r+1)}(n+1)
&=-nza^{(k-1)}_{\mu}(n)\notag\\
&+\sum_{r,s \ge 1} rsm_r(\mu)(m_s(\mu)-\delta_{rs})\, 
a_{\mu \setminus (r,s)\cup (r+s+1)}^{(k-1)}(n+1)
\\ &+ \sum_{r,s \ge 1} (r+s-1)m_{r+s-1}(\mu)\,
a_{\mu \setminus (r+s-1)\cup (r,s)}^{(k-1)}(n+1).\notag
\end{align}
if and only if the coefficients $c^{(k)}_\rho$ satisfy the recurrence relations
\begin{align}
c^{(k)}_{\rho\cup (1)}&=\sum_{r \ge 1} rm_r(\rho) \,c^{(k-1)}_{\rho\setminus (r) \cup(r+1)},\\
\sum_{r \ge 1} rm_r(\rho) \,c^{(k)}_{\rho\setminus (r) \cup(r+1)}&=
(2-z)|\rho|\,c^{(k-1)}_\rho +(1-z)m_1(\rho)\,c^{(k-1)}_{\rho\setminus (1)}\notag\\
&+\sum_{r,s \ge 1} rsm_r(\rho)(m_s(\rho)-\delta_{rs})\, 
c_{\rho \setminus (r,s)\cup (r+s+1)}^{(k-1)}\\
&+\sum_{r,s \ge 1} (r+s-1)m_{r+s-1}(\rho)\,
c_{\rho \setminus (r+s-1)\cup (r,s)}^{(k-1)}.\notag
\end{align}
\end{lem}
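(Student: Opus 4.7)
The plan is a substitute-and-compare computation in the binomial basis of $\mathbf{Q}[N]$. I would write $\sigma = \overline{\mu}$ and set $N = m_1(\mu) = n - |\sigma|$. The assignment $(\sigma', j) \mapsto \sigma' \cup 1^j$ sets up a bijection between partitions $\rho$ and pairs consisting of a partition with no $1$-parts and a nonnegative integer $j = m_1(\rho)$; under it, definition (A.1) reads
\[ a^{(k)}_\mu(n) = \sum_{j \ge 0} c^{(k)}_{\sigma \cup 1^j} \binom{N}{j}, \]
which is a polynomial in $N$. Since $\{\binom{N}{j}\}_{j \ge 0}$ is a basis of $\mathbf{Q}[N]$, the polynomial identities (A.2) and (A.3) in $n$ are equivalent to the coefficient-wise identities in this basis; the task becomes to verify that these coincide with (A.4) and (A.5) evaluated at $\rho = \sigma \cup 1^j$. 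The binomial shifts needed throughout are
\[ \binom{N+1}{j} - \binom{N}{j} = \binom{N}{j-1}, \qquad N\binom{N}{j} = j\binom{N}{j} + (j+1)\binom{N}{j+1}, \]
\[ N\binom{N-1}{j} = (j+1)\binom{N}{j+1}, \qquad N(N-1)\binom{N-2}{j} = (j+1)(j+2)\binom{N}{j+2}. \]

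Next I would substitute (A.1) into each side of (A.2) and (A.3). For any partition $\nu$ appearing as an argument of an $a^{(\cdot)}$, I would decompose $\nu = \overline{\nu} \cup 1^{m_1(\nu)}$, noting that a removed part $r$ behaves differently according to whether $r = 1$ or $r \ge 2$: in the former case the coefficient $r m_r(\mu) = N$ depends on $N$ and the shift $n+1 - |\overline{\nu}|$ drops by $1$ (or $2$ when two parts equal to $1$ are removed), giving $\binom{N-1}{\cdot}$ or $\binom{N-2}{\cdot}$; in the latter $m_r(\mu) = m_r(\sigma)$ and the shift stays $\binom{N}{\cdot}$. For (A.2) the computation is short: subtracting $a^{(k)}_\mu(n)$ from $a^{(k)}_{\mu \cup (1)}(n+1)$ yields $\sum_j c^{(k)}_{\sigma \cup 1^{j+1}} \binom{N}{j}$; the $r = 1$ term on the right contributes $\sum_{j \ge 1} j\, c^{(k-1)}_{\sigma \cup (2) \cup 1^{j-1}} \binom{N}{j}$ after shifting; the $r \ge 2$ terms are already in the right basis. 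Reading off the coefficient of $\binom{N}{j}$ at $\rho = \sigma \cup 1^j$ reproduces (A.4), whose $r = 1$ term is exactly $j\, c^{(k-1)}_{\rho \setminus (1) \cup (2)} = j\, c^{(k-1)}_{\sigma \cup (2) \cup 1^{j-1}}$.

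For (A.3) the $z$-dependent content can be handled separately. The only $z$-dependence on either side is affine (namely $-nz$ in (A.3), and $(2-z)|\rho|$ together with $(1-z)m_1(\rho)$ in (A.5)), so I would match the coefficient of $z$ first. It reduces to the identity
\[ n\, a^{(k-1)}_\mu(n) = \sum_j \bigl( (|\sigma|+j)\, c^{(k-1)}_{\sigma \cup 1^j} + j\, c^{(k-1)}_{\sigma \cup 1^{j-1}} \bigr) \binom{N}{j}, \]
which follows immediately from $n = N + |\sigma|$ and $N\binom{N}{j} = j\binom{N}{j} + (j+1)\binom{N}{j+1}$ together with $|\rho| = |\sigma| + j$ and $m_1(\rho) = j$. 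It then remains to match the $z$-free parts: each double sum in (A.3) splits into cases $(r,s) = (1,1)$, $(r = 1, s \ge 2)$, $(r \ge 2, s = 1)$, $(r \ge 2, s \ge 2)$, with the last case already matching the corresponding piece of (A.5) directly.

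The main obstacle is the bookkeeping of the remaining sub-cases. In particular the $(r,s) = (1,1)$ contribution from the third sum of (A.3) involves $N\binom{N+1}{j}$, which expands into three basis terms through
\[ N\binom{N+1}{j} = 2j\binom{N}{j} + (j+1)\binom{N}{j+1} + (j-1)\binom{N}{j-1}. \]
Verifying that these three terms, combined with the $N\binom{N-1}{j}$ contributions from the ``exactly one index equals $1$'' cases of both double sums and the $N(N-1)\binom{N-2}{j}$ contribution from the $(1,1)$ case of the second sum, assemble into exactly the $2|\rho|\, c^{(k-1)}_\rho + m_1(\rho)\, c^{(k-1)}_{\rho \setminus (1)}$ piece of (A.5) will be a purely algebraic check, following from the binomial shifts listed above. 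No new idea is needed; the difficulty lies only in tracking each sub-case carefully. Once this is done, the coefficient of $\binom{N}{j}$ on both sides matches (A.5) at $\rho = \sigma \cup 1^j$, completing the equivalence.
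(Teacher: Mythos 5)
Your proposal follows essentially the same route as the paper's Appendix: substitute the binomial expansion, isolate the parts equal to $1$ (whose multiplicity $N=n-|\overline{\mu}|$ carries all the $n$-dependence), and identify coefficients in the basis $\binom{N}{j}$ of $\mathbf{Q}[N]$; the paper merely packages your separate $z$-coefficient match and the $N\binom{N+1}{j}$ expansion into the single identity $(n+a)\binom{n-a+1}{b}-nz\binom{n-a}{b}=(1-z)(b+1)\binom{n-a}{b+1}+(2-z)(a+b)\binom{n-a}{b}+(2a+b-1)\binom{n-a}{b-1}$ with $a=|\overline{\mu}|$, $b=m_1(\rho)$. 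One bookkeeping correction to your final step: the boundary contributions do not assemble into $2|\rho|\,c^{(k-1)}_\rho+m_1(\rho)\,c^{(k-1)}_{\rho\setminus(1)}$ alone but into that \emph{plus} the $r=1$ or $s=1$ cases of the two double sums of (A.5); concretely, the $(1,1)$ and one-index-equal-to-one cases of the second sum of (A.3) match the corresponding boundary cases of (A.5)'s first double sum term by term, while the third sum's boundary cases yield $2|\rho|\,c^{(k-1)}_\rho+m_1(\rho)\,c^{(k-1)}_{\rho\setminus(1)}+(2|\rho|-m_1(\rho))\,c^{(k-1)}_{\rho\cup(1)}$, the last term being exactly the $r=1$ or $s=1$ part of (A.5)'s second double sum.
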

\begin{proof} When substituting (12.1) into (12.2)--(12.3) we must distinguish the parts 1 of $\mu$ since $m_1(\mu)=n-|\overline{\mu}|$ depends on $n$. Firstly (12.1) yields
\[a^{(k)}_{\mu\cup (1)}(n+1)-a^{(k)}_{\mu}(n)=\sum_{\overline{\rho}=\overline{\mu}}c^{(k)}_\rho \, \binom{n-|\overline{\mu}|}{m_1(\rho)-1},\]
so that (12.2) may be written as
\begin{align*}
\sum_{\overline{\rho}=\overline{\mu}}c^{(k)}_\rho \, \binom{n-|\overline{\mu}|}{m_1(\rho)-1}&=
(n-|\overline{\mu}|)
\sum_{\overline{\sigma}=\overline{\mu}\cup (2)}c^{(k-1)}_{\sigma} \, \binom{n-|\overline{\mu}|-1}{m_1(\sigma)}\\
&+\sum _{r\ge 2}
rm_r(\mu)\,\sum_{\overline{\tau}=\overline{\mu}\setminus (r)\cup (r+1)}c^{(k-1)}_{\tau} \, \binom{n-|\overline{\mu}|}{m_1(\tau)}.
\end{align*}
By identification of the coefficients of $\binom{n-|\overline{\mu}|}{m_1(\rho)}$ on both sides, (12.4) follows. 

Secondly we have
\begin{align*}
\sum_{\{r=1\} \cup \{s=1\}} (r+s-1)m_{r+s-1}(\mu)\,
a_{\mu \setminus (r+s-1)\cup (r,s)}^{(k-1)}(n+1)&=
(2n-m_1(\mu))\,a_{\mu \cup (1)}^{(k-1)}(n+1)\\
&=(n+|\overline{\mu}|)\sum_{\overline{\tau}=\overline{\mu}}c^{(k-1)}_{\tau} \, \binom{n-|\overline{\mu}|+1}{m_1(\tau)},
\end{align*}
and (12.3) may be written as
\begin{align*}
(n&-|\overline{\mu}|)
\sum_{\overline{\sigma}=\overline{\mu}\cup (2)}c^{(k)}_{\sigma} \, \binom{n-|\overline{\mu}|-1}{m_1(\sigma)}
+\sum _{r\ge 2}
rm_r(\mu)\,\sum_{\overline{\tau}=\overline{\mu}\setminus (r)\cup (r+1)}c^{(k)}_{\tau} \, \binom{n-|\overline{\mu}|}{m_1(\tau)}\\
=&-nz\sum_{\overline{\rho}=\overline{\mu}}c^{(k-1)}_\rho \, \binom{n-|\overline{\mu}|}{m_1(\rho)}+ (n+|\overline{\mu}|)\sum_{\overline{\rho}=\overline{\mu}}c^{(k-1)}_{\rho} \, \binom{n-|\overline{\mu}|+1}{m_1(\rho)}\\
&+ \sum_{r,s \ge 2} (r+s-1)m_{r+s-1}(\mu)\,
\sum_{\overline{\tau}=\overline{\mu}\setminus (r+s-1)\cup (r,s)}c^{(k-1)}_{\tau} \, \binom{n-|\overline{\mu}|}{m_1(\tau)}\\
&+\sum_{r,s \ge 2} rsm_r(\mu)(m_s(\mu)-\delta_{rs})\, 
\sum_{\overline{\sigma}=\overline{\mu}\setminus (r,s)\cup (r+s+1)}c^{(k-1)}_{\sigma} \, \binom{n-|\overline{\mu}|}{m_1(\sigma)}\\
&+2 (n-|\overline{\mu}|)\sum_{r \ge 2} rm_r(\mu)\, 
\sum_{\overline{\sigma}=\overline{\mu}\setminus (r)\cup (r+2)}c^{(k-1)}_{\sigma} \, \binom{n-|\overline{\mu}|-1}{m_1(\sigma)}\\
&+(n-|\overline{\mu}|)(n-|\overline{\mu}|-1)\, 
\sum_{\overline{\sigma}=\overline{\mu}\cup (3)}c^{(k-1)}_{\sigma} \, \binom{n-|\overline{\mu}|-2}{m_1(\sigma)}.
\end{align*}
But we have the identity
\begin{align*}
(n+a)\binom{n-a+1}{b}-nz\binom{n-a}{b}&=(1-z)(b+1)\binom{n-a}{b+1}+(2-z)(a+b)\binom{n-a}{b}\\&+(2a+b-1)\binom{n-a}{b-1},
\end{align*}
which we apply with $a=|\overline{\mu}|=|\overline{\rho}|$ and $b=m_1(\rho)$ so that $a+b=|\rho|$ and $2a+b=2|\rho|-m_1(\rho)$. Now (12.5) follows by identifying the coefficients of $\binom{n-|\overline{\mu}|}{m_1(\rho)}$ on both sides and using
\[
\sum_{\{r=1\} \cup \{s=1\}}(r+s-1)m_{r+s-1}(\rho)\,
c_{\rho \setminus (r+s-1)\cup (r,s)}^{(k-1)}=(2|\rho|-m_1(\rho))c_{\rho \cup (1)}^{(k-1)}. \]
\end{proof}


\begin{thebibliography}{29}

\bibitem{B1}
P.\ Biane, \emph{Representations of symmetric groups and free probability}, Adv. Math. \textbf{138} (1998), 126Ð-181.
\bibitem{B2}
P.\ Biane, \emph{Characters of symmetric groups and free cumulants}, Lecture Notes in Math. \textbf{1815} (2003), 185Ð-200, Springer, Berlin, 2003. 
\bibitem{Co}
S.\ Corteel, A.\ Goupil, G.\ Schaeffer, \emph{Content evaluation and class symmetric functions}, Adv. Math. \textbf{188} (2004), 315--336.
\bibitem{CST}
T.\ Ceccherini-Silberstein, F.\ Scarabotti, F.\ Tolli, \emph{Representation Theory of the Symmetric Groups}, Cambridge University Press, Cambridge, 2010.
\bibitem{Fer}
V.\ F\'eray, \emph{Partial Jucys-Murphy elements and star factorizations}, Eur. J. Comb. \textbf{33} (2012), 189--198.
\bibitem{Fer2}
V.\ F\'eray, \emph{On complete functions in Jucys-Murphy elements}, Ann. Comb. \textbf{16} (2012), 677--707.
\bibitem{Fer3}
V.\ F\'eray,  first version of~\cite{Fer2}, arXiv:1009.0144v1.
\bibitem{IK}
V.\ Ivanov, S.\ Kerov, \emph{The algebra of conjugacy classes in symmetric groups, and partial permutations}, Zapiski Nauchnyh Seminarov POMI \textbf{256} (1999), 95--120.
\bibitem{Ju}
A.-A.\ A.\ Jucys, \emph{Symmetric polynomials and the center of the symmetric group ring}, Rep. Math. Phys. \textbf{5} (1974), 107--112.
\bibitem{K1}
S.\ V.\ Kerov, \emph{Transition probabilities for continual Young diagrams and the Markov moment problem}, Funct. Anal. Appl. \textbf{27} (1993), 104Ð-117.
\bibitem{K2}
S.\ V.\ Kerov, \emph{Anisotropic Young diagrams and Jack symmetric functions}, Funct. Anal. Appl. \textbf{34} (2000), 41Ð-51.
\bibitem{KO}
S.\ V.\ Kerov, G.\ I.\ Olshanski, \emph{Polynomial functions on the set of Young diagrams}, C.\ R.\ Acad.\ Sci.\ Paris S\'er.\ I, \textbf{319} (1994), 121--126.
\bibitem{Las}
A.\ Lascoux, \emph{Symmetric functions and combinatorial operators on
polynomials}, CBMS Regional Conference Series in Mathematics \textbf{99}, Amer. Math. Soc., Providence, 2003.
\bibitem{LT}
A.\ Lascoux, J.-Y.\ Thibon, \emph{Vertex operators and the class algebras of symmetric groups}, Zapiski Nauchnyh Seminarov POMI \textbf{283} (2001), 156--177.
\bibitem{La6}
M.\ Lassalle, \emph{Some combinatorial conjectures for Jack polynomials}, Ann. Comb. \textbf{2} (1998), 61--83.
\bibitem{La3}
M.\ Lassalle, \emph{Une $q$-sp\'ecialisation pour les fonctions
sym\'etriques monomiales}, Adv.\ Math. \textbf{162} (2001), 217--242.
\bibitem{La2}
M.Ö Lassalle, \emph{A new family of positive integers}, Ann. Comb. \textbf{6} (2002), 399--405.
\bibitem{La1}
M.\ Lassalle, \emph{Jack polynomials and some identities for partitions}, Trans. Amer. Math. Soc. \textbf{356} (2004), 3455--3476.
\bibitem{La}
M.\ Lassalle, \emph{An explicit formula for the characters of the symmetric group}, Math. Annalen \textbf{340} (2008), 383--405.
\bibitem{La4}
M.\ Lassalle, \emph{A positivity conjecture for Jack polynomials}, Math. Res. Lett. \textbf{15} (2008), 661--681.
\bibitem{La5}
M.\ Lassalle, \emph{Narayana polynomials and Hall-Littlewood symmetric functions}, Adv. in Appl. Math. \textbf{49} (2012), 239--262.
\bibitem{La7}
M.\ Lassalle, \emph{A new generalization of binomial coefficients}, Ramanujan J. to appear.
\bibitem{W2}
M.\ Lassalle, available at http://igm.univ-mlv.fr/{\textasciitilde}lassalle/jme.html
\bibitem{W}
M.\ Lassalle, available at http://igm.univ-mlv.fr/{\textasciitilde}lassalle/char.html
\bibitem{Ma}
I.\ G.\ Macdonald, \emph{Symmetric Functions and Hall Polynomials},
Clarendon Press, second edition, Oxford, 1995.
\bibitem{No2}
S.\ Matsumoto, J.\ Novak, \emph{Jucys-Murphy elements and unitary matrix integrals}, Int. Math. Res. Not. \textbf{2} (2013), 362--397. 
\bibitem{Mat}
S.\ Matsumoto, \emph{Jucys-Murphy elements, orthogonal matrix integrals, and Jack measures}, Ramanujan J. \textbf{26} (2011), 69--107.
\bibitem{Mu1}
G.\ E.\ Murphy, \emph{A new construction of Young's seminormal representation of the symmetric group}, J. Algebra \textbf{69} (1981), 287--291.
\bibitem{Mu2}
G.\ E.\ Murphy, \emph{The idempotents of the symmetric group and Nakayama's conjecture}, J. Algebra \textbf{81} (1983), 258--265.
\bibitem{Mur}
J.\ Murray, \emph{Generators for the centre of the group algebra of a symmetric group}, J. Algebra \textbf{271} (2004), 725--748.
\bibitem{No1}
J.\ Novak, \emph{Jucys-Murphy elements and the Weingarten function}, Banach Center Publ. \textbf{89} (2010), 231--235.
\bibitem{OO}
A.\ Okounkov, G.\ I.\ Olshanski, \emph{Shifted Schur functions}, St. Petersburg Math. J. \textbf{9} (1998), 239--300.
\bibitem{Ok1}
A.\ Okounkov, G.\ I.\ Olshanski, \emph{Shifted Jack polynomials, binomial formula and applications}, Math. Res. Lett. \textbf{4} (1997), 69--78.
\bibitem{Ok2}
A. Okounkov, \emph{(Shifted) Macdonald polynomials, $q$-integral representation and combinatorial formula}, Compos. Math. \textbf{112} (1998), 147--182.
\bibitem{Ol}
G.\ I.\ Olshanski, \emph{Plancherel averages: remarks on a paper by Stanley}, Electron. J. Combin. \textbf{17} (2010), paper R43.
\bibitem{S0}
R.\ P.\ Stanley, \emph{Some combinatorial properties of Jack
symmetric functions}, Adv.\ Math. \textbf{77} (1989), 76--115.
\bibitem{S}
R.\ P.\ Stanley, \emph{Exercises on Catalan and related numbers} and \emph{Catalan addendum}, available at http://www-math.mit.edu/{\textasciitilde}rstan/ec/
\bibitem{Z}
D.\ Zeilberger, \emph{Six etudes in generating functions}, Intern.\ J.\ Comput.\ Math. \textbf{20} (1989), 201--215.
\end{thebibliography}
\end{document}